\numberwithin{equation}{section}
\begin{document}
\theoremstyle{plain}
\newtheorem{thm}{Theorem}[section]
\newtheorem{lem}[thm]{Lemma}
\newtheorem{cor}[thm]{Corollary}
\newtheorem{cor*}[thm]{Corollary*}
\newtheorem{prop}[thm]{Proposition}
\newtheorem{prop*}[thm]{Proposition*}
\newtheorem{conj}[thm]{Conjecture}
\theoremstyle{definition}
\newtheorem{construction}{Construction}
\newtheorem{notations}[thm]{Notations}
\newtheorem{question}[thm]{Question}
\newtheorem{prob}[thm]{Problem}
\newtheorem{rmk}[thm]{Remark}
\newtheorem{remarks}[thm]{Remarks}
\newtheorem{defn}[thm]{Definition}
\newtheorem{claim}[thm]{Claim}
\newtheorem{assumption}[thm]{Assumption}
\newtheorem{assumptions}[thm]{Assumptions}
\newtheorem{properties}[thm]{Properties}
\newtheorem{exmp}[thm]{Example}
\newtheorem{comments}[thm]{Comments}
\newtheorem{blank}[thm]{}
\newtheorem{observation}[thm]{Observation}
\newtheorem{defn-thm}[thm]{Definition-Theorem}
\newtheorem*{Setting}{Setting}

\newcommand{\sA}{\mathscr{A}}
\newcommand{\sB}{\mathscr{B}}
\newcommand{\sC}{\mathscr{C}}
\newcommand{\sD}{\mathscr{D}}
\newcommand{\sE}{\mathscr{E}}
\newcommand{\sF}{\mathscr{F}}
\newcommand{\sG}{\mathscr{G}}
\newcommand{\sH}{\mathscr{H}}
\newcommand{\sI}{\mathscr{I}}
\newcommand{\sJ}{\mathscr{J}}
\newcommand{\sK}{\mathscr{K}}
\newcommand{\sL}{\mathscr{L}}
\newcommand{\sM}{\mathscr{M}}
\newcommand{\sN}{\mathscr{N}}
\newcommand{\sO}{\mathscr{O}}
\newcommand{\sP}{\mathscr{P}}
\newcommand{\sQ}{\mathscr{Q}}
\newcommand{\sR}{\mathscr{R}}
\newcommand{\sS}{\mathscr{S}}
\newcommand{\sT}{\mathscr{T}}
\newcommand{\sU}{\mathscr{U}}
\newcommand{\sV}{\mathscr{V}}
\newcommand{\sW}{\mathscr{W}}
\newcommand{\sX}{\mathscr{X}}
\newcommand{\sY}{\mathscr{Y}}
\newcommand{\sZ}{\mathscr{Z}}
\newcommand{\bZ}{\mathbb{Z}}
\newcommand{\bN}{\mathbb{N}}
\newcommand{\bQ}{\mathbb{Q}}
\newcommand{\bC}{\mathbb{C}}
\newcommand{\bR}{\mathbb{R}}
\newcommand{\bH}{\mathbb{H}}
\newcommand{\bD}{\mathbb{D}}
\newcommand{\bE}{\mathbb{E}}
\newcommand{\bV}{\mathbb{V}}
\newcommand{\cV}{\mathcal{V}}
\newcommand{\cF}{\mathcal{F}}
\newcommand{\bfM}{\mathbf{M}}
\newcommand{\bfN}{\mathbf{N}}
\newcommand{\bfX}{\mathbf{X}}
\newcommand{\bfY}{\mathbf{Y}}
\newcommand{\spec}{\textrm{Spec}}
\newcommand{\dbar}{\bar{\partial}}
\newcommand{\ddbar}{\partial\bar{\partial}}
\newcommand{\redref}{{\color{red}ref}}

\title[$L^2$-Extension of Adjoint bundles and Koll\'ar's Conjecture] {$L^2$-Extension of Adjoint bundles and Koll\'ar's Conjecture}

\author[Junchao Shentu]{Junchao Shentu}
\email{stjc@ustc.edu.cn}
\address{School of Mathematical Sciences,
	University of Science and Technology of China, Hefei, 230026, China}
\author[Chen Zhao]{Chen Zhao}
\email{czhao@ustc.edu.cn}
\address{School of Mathematical Sciences,
	University of Science and Technology of China, Hefei, 230026, China}

\begin{abstract}
We give a new proof of Koll\'ar's conjecture on the pushforward of the dualizing sheaf twisted by a variation of Hodge structure. This conjecture was settled by M. Saito via mixed Hodge modules and has applications in the investigation of Albanese maps. Our technique is the $L^2$-method and we give a concrete construction and proofs of the conjecture. The $L^2$ point of view allows us to generalize Koll\'ar's conjecture to the context of non-abelian Hodge theory.
\end{abstract}

\maketitle

\section{Introduction}
Let $f:X\rightarrow Y$ be a surjective morphism between complex projective varieties. Assume that $X$ is smooth and denote by $\omega_X$ its dualizing sheaf. In \cite{Kollar1986_1,Kollar1986_2}, J. Koll\'ar proves the following results which is roughly called the Koll\'ar package in this paper.
\begin{description}
	\item[Torsion Freeness] $R^if_\ast\omega_X$ is torsion free for $i\geq 0$ and $R^if_\ast\omega_X=0$ if $i>\dim X-\dim Y$.
	\item[Vanishing Theorem] If $L$ is an ample line bundle on $Y$, then
	$$H^j(Y,R^if_\ast\omega_X\otimes L)=0\textrm{ for }\forall j>0 \textrm{ and }\forall i\geq0.$$
	\item[Decomposition Theorem] $Rf_\ast(\omega_X)$ splits in $D(Y)$, i.e. 
	$$Rf_\ast(\omega_X)\simeq \bigoplus_{q} R^qf_\ast(\omega_X)[-q]\in D(Y).$$ As a consequence, the spectral sequence
	$$E^{pq}_2:=H^p(Y,R^qf_\ast\omega_X)\Rightarrow H^{p+q}(X,\omega_X)$$
	degenerates at the $E_2$ page.
\end{description}
Motivated by the proofs, Koll\'ar \cite[\S 5]{Kollar1986_2} conjectured that the Koll\'ar package could be put into a more general framework which is closely related to  variations of Hodge structure. More precisely, Koll\'ar conjectured that there is a coherent sheaf $S_X(\bV)$ associated to every polarized variation of Hodge structure $\bV=(\cV,\nabla,\cF^\bullet,h_\bV)$ over some dense Zariski open subset of $X_{\rm reg}$, such that the three results above hold when $\omega_X$ is replaced by $S_X(\bV)$. This conjecture is perfectly settled in \cite{MSaito1991} by M. Saito's theory of mixed Hodge modules \cite{MSaito1988,MSaito1990} and has applications in the investigation of Albanese maps.

The purpose of this paper has two sides.
\begin{enumerate}
	\item Give a concrete construction of $S_X(\bV)$ by using certain $L^2$ holomorphic sections and reprove Koll\'ar's conjecture without using mixed Hodge modules. There are, in addition to the concrete construction, two other advantages of the $L^2$ method:
	\begin{enumerate}
		\item It allows us to prove Koll\'ar's conjecture for proper K\"ahler morphisms. The machinery of mixed Hodge modules does not work for this more general setting because the decomposition theorem of Saito with respect to a proper K\"ahler morphism is still a conjecture (\cite[Conjecture 0.4]{MSaito1990(3)}).
		\item It allows us to prove Koll\'ar's conjecture with a twisted coefficients in a hermitian vector bundle with a Nakano semi-positive curvature. This flexibility is convenient for applications.
	\end{enumerate}
	\item We observe that, rather than the structure of the variation of Hodge structure, the validity of the Koll\'ar package for $S_X(\bV)$ is a consequence of the Nakano semipositivity of the top Hodge bundle $S(\bV):=\cF^{\max\{k|\cF^k\neq0\}}$ (see \S \ref{section_abstract_Kollar_package} for the abstract Koll\'ar package). This allows us to further generalize Kollar's conjecture to the context of non-abelian Hodge theory. For example, we show that the Koll\'ar package holds for $S_X(E,h)$ (see below for definition) when $E$ is a subbundle of a tame harmonic bundle $(H,\theta,h)$ such that $\overline{\theta}(E)=0$ and the second fundamental form of $E\subset H$ vanishes. A typical example is a system of Hodge bundles in the sense of C. Simpson \cite{Simpson1988} on some Zariski open subset $X^o\subset X_{\rm reg}$ while $E$ is the top nonzero Hodge summand. This is an example that supports the principle that many results for variation of Hodge structure hold for harmonic bundles. Readers may see the hard Lefschetz theorems of Simpson \cite{Simpson1992}, C. Sabbah \cite{Sabbah2005} and T. Mochizuki \cite{Mochizuki20072,Mochizuki20071}, the vanishing theorems of D. Arapura \cite{Arapura2019} and Y. Deng-F. Hao \cite{Dengya2019} and Mochizuki's degeneration theory for twistor structures \cite{Mochizuki20072} for other examples supporting this principle. Notice that the Koll\'ar conjecture for complex variations of Hodge structure may be proved in the patten of Saito \cite{MSaito1991} by using the theory of complex Hodge modules (see \cite{Popa-Schnell2017} or the MHM Project by C. Sabbah and C. Schnell on the homepage of Sabbah.). 
\end{enumerate}
Let $X$ be a complex space and $X^o\subset X_{\rm reg}$ a dense Zariski open subset. Let $(E,h)$ be a hermitian vector bundle on $X^o$. Denote by $j:X^o\to X$ the open immersion. The main object of the present paper is the subsheaf $S_X(E,h)\subset j_\ast(K_{X^o}\otimes E)$ consisting of the holomorphic forms which are locally square integrable at every point of $X$.
This class of objects contains Saito's $S$-sheaf and shares many properties of the dualizing sheaf. It has the following features:
\begin{enumerate}
	\item $S_X(E,h)$ is a torsion free $\sO_X$-module. It is coherent when $(E,h)$ is Nakano semi-positive and tame (Definition \ref{defn_tame_hermitian_bundle}).
	\item $S_X(E,h)$ has the functorial property (Proposition \ref{prop_L2ext_birational}): Let $\pi:X'\to X$ be a proper bimeromorphic map such that $\pi|_{\pi^{-1}X^o}:\pi^{-1}X^o\to X^o$ is biholomorphic, then 
	\begin{align*}
	S_X(E,h)\simeq \pi_\ast(S_{X'}(\pi^\ast E,\pi^\ast h)).
	\end{align*}
	\item Let $\bV:=(\cV,\nabla,\cF^\bullet,h_\bV)$ be an $\bR$-polarized variation of Hodge structure where $h_\bV$ is the Hodge metric. Denote by $S(\bV):=\cF^{\max\{k|\cF^k\neq0\}}$ the top nonzero Hodge piece and by $S(IC_X(\bV))$ the Saito's $S$-sheaf associated to the Hodge module $IC_X(\bV)$. Then $$S_X(S(\bV),h_\bV)\simeq S(IC_X(\bV))\quad \textrm{(\cite[Theorem 4.10]{SC2021})}.$$
	\item Consider a tame harmonic bundle $(H,\theta,h)$ and a holomorphic subbundle $E\subset H$ with vanishing second fundamental form. Assume that $\overline{\theta}(E)=0$. Then the corresponding $S_X(E,h)$ is a coherent sheaf (Proposition \ref{prop_CVHS_Ssheaf}) and satisfies the Koll\'ar package (Theorem \ref{thm_main}). When $(E,\theta,h)$ is the harmonic bundle associated to an $\bR$-polarized variation of Hodge structure $\bV:=(\cV,\nabla,\cF^\bullet,h_\bV)$ and $E=S(\bV)$, this is reduced to Saito's $S(IC_X(\bV))$.
\end{enumerate}
All complex spaces are assumed to be separated, reduced, paracompact, countable at infinity and of pure dimension throughout the present paper. We would like to point out that the complex spaces are allowed to be non-irreducible.
The main result of the present paper is the following 
\begin{thm}\label{thm_main}
	Let $f:X\rightarrow Y$ be a proper locally K\"ahler morphism (Definition \ref{defn_loc_Kahler}) from a complex space $X$ to an irreducible complex space $Y$. Assume that every irreducible component of $X$ is mapped onto $Y$. Let $X^o\subset X_{\rm reg}$ be a dense Zariski open subset and $(H,\theta,h)$ a tame harmonic bundle on $X^o$. Let $E\subset H$ be a holomorphic subbundle with vanishing second fundamental form. Let $\overline{\theta}$ be the adjoint of $\theta$ and assume that $\overline{\theta}(E)=0$. Let $F$ be a Nakano semi-positive vector bundle on $X$. Then the following statements hold.
	\begin{description}
		\item[Torsion Freeness] 
			$R^qf_\ast (S_X(E,h)\otimes F)$ is torsion free for every $q\geq0$ and vanishes if $q>\dim X-\dim Y$.
		\item[Injectivity] If $L$ is a semi-positive holomorphic line bundle so that $L^l$ admits a nonzero holomorphic global section $s$ for some $l>0$, then the canonical morphism
		$$R^qf_\ast(\times s):R^qf_\ast(S_X(E,h)\otimes F\otimes L^{\otimes k})\to R^qf_\ast(S_X(E,h)\otimes F\otimes L^{\otimes k+l})$$
		is injective for every $q\geq0$ and every $k\geq1$
		\item[Vanishing] If $Y$ is a projective algebraic variety and $L$ is an ample line bundle on $Y$, then
		$$H^q(Y,R^pf_\ast (S_X(E,h)\otimes F)\otimes L)=0,\quad \forall q>0,\forall p\geq0.$$
		\item[Decomposition] Assume moreover that $X$ is a compact K\"ahler space, then $Rf_\ast (S_X(E,h)\otimes F)$ splits in $D(Y)$, i.e. 
		$$Rf_\ast(S_X(E,h)\otimes F)\simeq \bigoplus_{q} R^qf_\ast(S_X(E,h)\otimes F)[-q]\in D(Y).$$
		As a consequence, the spectral sequence
		$$E^{pq}_2:H^p(Y,R^qf_\ast(S_X(E,h)\otimes F))\Rightarrow H^{p+q}(X,S_X(E,h)\otimes F)$$
		degenerates at the $E_2$ page.
	\end{description}
\end{thm}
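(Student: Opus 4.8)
The plan is to reduce Theorem \ref{thm_main} to the abstract Koll\'ar package of \S\ref{section_abstract_Kollar_package}, whose hypotheses require only that the coefficient sheaf be of the form $S_X(\mathcal{E},h)$ for a \emph{tame} and \emph{Nakano semi-positive} Hermitian bundle $(\mathcal{E},h)$ on $X^o$, twisted by a Nakano semi-positive $F$. Coherence of $S_X(E,h)$ is already granted by Proposition \ref{prop_CVHS_Ssheaf}. Thus the whole content of the theorem for the harmonic-bundle datum $(H,\theta,h)$ and the subbundle $E$ follows once I verify that $(E,h|_E)$ satisfies the two standing hypotheses of the abstract package. Tameness is inherited from the tameness of $(H,\theta,h)$, since $h|_E$ is a restriction of $h$ and the norm estimates defining tameness pass to a subbundle; this is part of Proposition \ref{prop_CVHS_Ssheaf}.

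The heart of the argument is therefore a curvature computation proving Nakano semi-positivity of $(E,h|_E)$. Because the second fundamental form of $E\subset H$ vanishes, the Chern connection of $(H,h)$ preserves $E$ and the Gauss--Codazzi formula degenerates to $\Theta_{E}=\Theta_{H}|_{E}$. Simpson's Hitchin equation for the harmonic bundle then gives the Chern curvature
\begin{equation*}
\Theta_{H}=-[\theta,\overline{\theta}]=-(\theta\overline{\theta}+\overline{\theta}\theta),
\end{equation*}
where $\overline{\theta}=\theta^{\ast_h}$ is the fiberwise adjoint. Writing $\theta=\sum_j\theta_j\,dz_j$ in local holomorphic coordinates, so that $\overline{\theta}=\sum_j\theta_j^{\ast}\,d\bar z_j$, the curvature coefficient is $\Theta_{j\bar k}=-(\theta_j\theta_k^{\ast}-\theta_k^{\ast}\theta_j)$. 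For a local frame $\{e_l\}$ of $E$ the assumption $\overline{\theta}(E)=0$ means $\theta_k^{\ast}(e_l)=0$ for all $k,l$, hence $\Theta_{j\bar k}(e_l)=\theta_k^{\ast}\theta_j(e_l)$ and $\langle\Theta_{j\bar k}e_l,e_m\rangle=\langle\theta_j(e_l),\theta_k(e_m)\rangle$.

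Evaluating the Nakano quadratic form on an arbitrary $\xi=\sum_{j,l}\xi^{jl}\,\tfrac{\partial}{\partial z_j}\otimes e_l$ then yields
\begin{equation*}
\sum_{j,k,l,m}\langle\Theta_{j\bar k}e_l,e_m\rangle\,\xi^{jl}\overline{\xi^{km}}=\Big|\sum_{j,l}\xi^{jl}\theta_j(e_l)\Big|^2\ge 0,
\end{equation*}
which is precisely Nakano semi-positivity of $(E,h|_E)$. With both standing hypotheses in place, the four conclusions follow from the abstract package: Torsion Freeness, Injectivity and Vanishing from the $L^2$ harmonic theory for a proper locally K\"ahler $f$, and Decomposition from the extra compact K\"ahler assumption on $X$. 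The twist by $F$ is harmless because $i\Theta_{E\otimes F}=i\Theta_E\otimes\mathrm{id}_F+\mathrm{id}_E\otimes i\Theta_F$, so the commutator with $\Lambda$ entering the Bochner--Kodaira--Nakano estimate is a sum of two semi-positive operators on $(n,q)$-forms and stays semi-positive.

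I expect the genuine difficulty to lie not in this verification but in the abstract package itself---constructing an $L^2$-Dolbeault resolution of $S_X(E,h)\otimes F$, identifying $R^qf_\ast$ with an $L^2$ harmonic space, and establishing Injectivity and Decomposition through Ohsawa--Takegoshi-type twisting and a hard Lefschetz/$\ddbar$-argument in the singular K\"ahler setting. Within the present theorem, the one point demanding care is the compatibility of the pointwise positivity above with the singularities of $h$ along $X\setminus X^o$ and with the tameness estimates, so that the pointwise Nakano semi-positivity globalizes to the $L^2$ estimates the package consumes.
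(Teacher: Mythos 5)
Your proposal is correct and follows essentially the same route as the paper: establish tameness and Nakano semi-positivity of $(E,h)$ via the self-dual equation and the hypotheses $\overline{\theta}(E)=0$ and vanishing second fundamental form (this is exactly Proposition \ref{prop_CVHS_Ssheaf}), absorb the twist by $F$ into the coefficient bundle via $S_X(E,h)\otimes F\simeq S_X(E\otimes F,h\otimes h_F)$, and then invoke the abstract Koll\'ar package of \S\ref{section_abstract_Kollar_package}. Your coordinate computation of the Nakano form is just an expanded version of the paper's one-line identity $\sqrt{-1}\Theta_h(E)=\sqrt{-1}\,\theta\wedge\overline{\theta}\geq 0$, so there is nothing further to add.
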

When $(H,\theta,h)$ is a harmonic bundle associated to an $\bR$-polarized variation of Hodge structure $\bV:=(\cV,\nabla,\cF^\bullet,h_\bV)$, $E=S(\bV)$ and $F=\sO_X$, Theorem \ref{thm_main} implies Koll\'ar's conjecture. In this case, our construction of $S_X(\bV)$ coincides with Saito's.

The present paper is organized as follows. In Section 2 we introduce the construction of the adjoint $L^2$-extension $S_X(E,h)$ of a hermitian bundle $(E,h)$ which  generalizes Saito's $S$-sheaf. Some fundamental properties of $S_X(E,h)$ are proved and its $L^2$-Dolbeault resolution is introduced. Depending on the $L^2$-Dolbeault resolution, we introduce a harmonic representation of the derived pushforwards of $S_X(E,h)$ in Section 3 by generalizing K. Takegoshi's work \cite{Takegoshi1995} to complex spaces. In Section 4 we prove an abstract Koll\'ar package and illustrate the observation that: Koll\'ar's conjecture is a consequence of the Nakano semi-positivity of the top Hodge piece.  We prove the main theorem \ref{thm_main} in Section 5 as an application.

{\bf Notation:} Let $X$ be a complex space. A Zariski closed subset (=closed analytic subset) $Z\subset X$ is a closed subset which is locally defined as the zeros of a set of holomorphic functions. A subset $Y\subset X$ is called Zariski open if $X\backslash Z\subset X$ is Zariski closed.
\section{$L^2$-extension and its $L^2$-Dolbeault resolution}
\subsection{$L^2$-Dolbeault cohomology and $L^2$-Dolbeault complex}
A psh (resp. strictly psh) function on a complex space $X$ is a function $\lambda:X\to[-\infty,\infty)$ such that, locally at every point $x\in X$, there is a neighborhood $x\in U$, a closed immersion $\iota:U\to\Omega$ into a complex manifold $\Omega$ and a  psh (resp. strictly psh) function $\Lambda$ on $\Omega$ such that $\iota^\ast\Lambda=\lambda$. By a $C^\infty$ form on $X$ we mean a $C^\infty$ form $\alpha$ on $X_{\rm reg}$ so that locally at every point $x\in X$ there is an open neighborhood $x\in U$, a closed immersion $\iota:U\to\Omega$ into a holomorphic manifold $\Omega$ and $\beta\in C^\infty(\Omega)$ such that $\iota^\ast\beta=\alpha$ on $U\cap X_{\rm reg}$.

Let $(Y,ds^2)$ be a hermitian manifold of dimension $n$ and $(E,h)$ a hermitian vector bundle on $Y$. Let $L^{p,q}_{(2)}(Y,E;ds^2,h)$ be the space of measurable $E$-valued $(p,q)$-forms on $Y$ which are square integrable with respect to the metrics $ds^2$ and $h$. Denote $\dbar_{\rm max}$ to be the maximal extension of the $\dbar$ operator defined on the domains
$$D^{p,q}_{\rm max}(Y,E;ds^2,h):=\textrm{Dom}^{p,q}(\dbar_{\rm max})=\{\phi\in L_{(2)}^{p,q}(Y,E;ds^2,h)|\dbar\phi\in L_{(2)}^{p,q+1}(Y,E;ds^2,h)\}.$$
The $L^2$ cohomology $H_{(2),\rm max}^{p,\bullet}(Y,E;ds^2,h)$ is defined as the cohomology of the complex
\begin{align}\label{align_L2_dol_cohomology}
D^{p,\bullet}_{\rm max}(Y,E;ds^2,h):=D^{p,0}_{\rm max}(Y,E;ds^2,h)\stackrel{\dbar_{\rm max}}{\to}\cdots\stackrel{\dbar_{\rm max}}{\to}D^{p,n}_{\rm max}(Y,E;ds^2,h).
\end{align}

Let $X$ be a complex space and $X^o\subset X_{\rm reg}$ a dense Zariski open subset of the regular locus $X_{\rm reg}$. Let $ds^2$ be a hermitian metric on $X^o$ and $(E,h)$ a hermitian vector bundle on $X^o$.
Let $U\subset X$ be an open subset. Define $L_{X,ds^2}^{p,q}(E,h)(U)$ to be the space of measurable $E$-valued $(p,q)$-forms $\alpha$ on $U\cap X^o$ such that for every point $x\in U$, there is a neighborhood $x\in V_x$ so that 
$$\int_{V_x\cap X^o}|\alpha|^2_{ds^2,h}{\rm vol}_{ds^2}<\infty.$$
For each $0\leq p,q\leq n$, we define a sheaf $\sD_{X,ds^2}^{p,q}(E,h)$ on $X$ by
$$\sD_{X,ds^2}^{p,q}(E,h)(U):=\{\phi\in L_{X,ds^2}^{p,q}(E,h)(U)|\bar{\partial}_{\rm max}\phi\in L_{X,ds^2}^{p,q+1}(E,h)(U)\}$$
for every open subset $U\subset X$.

Define the $L^2$-Dolbeault complex of sheaves $\sD_{X,ds^2}^{p,\bullet}(E,h)$ as
\begin{align}\label{align_D_complex2}
\sD_{X,ds^2}^{p,0}(E,h)\stackrel{\dbar}{\to}\sD_{X,ds^2}^{p,1}(E,h)\stackrel{\dbar}{\to}\cdots\stackrel{\dbar}{\to}\sD_{X,ds^2}^{p,n}(E,h)
\end{align}
where $\dbar$ is defined in the sense of distribution.
\begin{defn}\label{defn_hermitian_metric}
	Let $X$ be a complex space and $ds^2$ a hermitian metric on $X_{\rm reg}$. We say that $ds^2$ is a hermitian metric on $X$ if, for every $x\in X$, there is a neighborhood $U$ and a holomorphic closed immersion $U\subset V$ into a complex manifold $V$ such that $ds^2|_U\sim ds^2_V|_{U}$ for some hermitian metric $ds^2_V$ on $V$. If $ds^2_0|_{X_{\rm reg}}$ is moreover a K\"ahler metric, we say that $ds^2_0$ is a K\"ahler hermitian metric.
\end{defn}
\begin{lem}\label{lem_fine_sheaf}
	Let $X$ be a complex space and $X^o\subset X_{\rm reg}$ a dense Zariski open subset. Let $ds^2$ be a hermitian metric on $X^o$ and $(E,h)$ a hermitian vector bundle on $X^o$. Suppose that for every point $x\in X\backslash X^o$ there is a neighborhood $x\in U_x$ and a hermitian metric $ds^2_0$ on $U_x$ such that $ds^2_0|_{X^o\cap U_x}\lesssim ds^2|_{X^o\cap U_x}$. Then $\sD^{p,q}_{X,ds^2}(E,h)$ is a fine sheaf for every $p$ and $q$.
\end{lem}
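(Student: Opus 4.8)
The plan is to verify the defining property of a fine sheaf directly: for an arbitrary locally finite open cover of $X$, I will produce sheaf endomorphisms of $\sD^{p,q}_{X,ds^2}(E,h)$ supported in the members of the cover and summing to the identity. Since $X$ is paracompact, such endomorphisms will come from a smooth partition of unity, so the whole content of the lemma reduces to a single local claim: \emph{multiplication by a globally $C^\infty$ function on $X$ preserves the sheaf}. Concretely, I would fix a $C^\infty$ partition of unity $\{\rho_i\}$ on $X$ (in the sense of functions arising from ambient smooth functions via local closed immersions) subordinate to the cover, and let $\eta_i$ be multiplication by $\rho_i|_{X^o}$. That $\rho_i\phi$ is locally square integrable is immediate from $|\rho_i\phi|_{ds^2,h}\le\|\rho_i\|_\infty\,|\phi|_{ds^2,h}$ together with the local boundedness of $\rho_i$; the only real issue is the behaviour under $\dbar$.

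Next I would invoke the Leibniz rule for the maximal extension: if $\rho$ is a bounded $C^\infty$ function with locally bounded differential and $\phi\in\mathrm{Dom}(\dbar_{\rm max})$, then $\rho\phi\in\mathrm{Dom}(\dbar_{\rm max})$ and $\dbar_{\rm max}(\rho\phi)=\dbar\rho\wedge\phi+\rho\,\dbar_{\rm max}\phi$. This is a standard Friedrichs mollification argument and needs only the stated bounds on $\rho$. The summand $\rho\,\dbar_{\rm max}\phi$ is locally $L^2$ because $\dbar_{\rm max}\phi$ is, so the problem collapses to showing that $\dbar\rho_i\wedge\phi$ is locally square integrable. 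Pointwise one has $|\dbar\rho_i\wedge\phi|_{ds^2,h}\le c_n\,|\dbar\rho_i|_{ds^2}\,|\phi|_{ds^2,h}$, and hence it is enough to bound $|\dbar\rho_i|_{ds^2}$ locally.

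This last bound is exactly where the hypothesis $ds^2_0\lesssim ds^2$ is used, and I expect the metric comparison to be the only genuinely delicate step. Because $\rho_i$ is the restriction of an ambient smooth function and $ds^2_0$ extends to a bona fide hermitian metric on the neighborhood $U_x$, the $(0,1)$-form $\dbar\rho_i$ has locally bounded norm measured in $ds^2_0$. The point is that the one-sided bound $ds^2_0\lesssim ds^2$ on tangent vectors dualizes to the reverse inequality on cotangent vectors: if $g_0\le C g$ as quadratic forms then $g^{-1}\le C\,g_0^{-1}$, so that $|\xi|_{ds^2}\le\sqrt{C}\,|\xi|_{ds^2_0}$ for every covector $\xi$. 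Applying this to $\xi=\dbar\rho_i$ gives $|\dbar\rho_i|_{ds^2}\le\sqrt{C}\,|\dbar\rho_i|_{ds^2_0}$, which is locally bounded, and therefore $\eta_i\big(\sD^{p,q}_{X,ds^2}(E,h)\big)\subset\sD^{p,q}_{X,ds^2}(E,h)$. I would flag that a full quasi-isometry between $ds^2_0$ and $ds^2$ is \emph{not} needed here; only the single-sided estimate, precisely because it controls the $ds^2$-length of a differential rather than the length of a tangent vector.

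Finally, the endomorphisms $\{\eta_i\}$ satisfy $\mathrm{supp}\,\eta_i\subset\mathrm{supp}\,\rho_i$, which is contained in the corresponding member of the cover, and $\sum_i\eta_i=\mathrm{id}$ on $\sD^{p,q}_{X,ds^2}(E,h)$ because $\sum_i\rho_i\equiv1$. This is exactly the defining property of a fine sheaf, so the argument concludes. In summary, the structure is: paracompactness supplies the partition of unity, the Leibniz rule reduces fineness to the local $L^2$-boundedness of $\dbar\rho_i\wedge\phi$, and the dual metric comparison $ds^2_0\lesssim ds^2$ supplies the required control on $|\dbar\rho_i|_{ds^2}$.
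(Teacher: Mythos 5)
Your proof is correct and follows essentially the same route as the paper's: both reduce fineness to checking that multiplication by (restrictions of ambient) smooth bump functions preserves $\sD^{p,q}_{X,ds^2}(E,h)$, and both hinge on the observation that the one-sided comparison $ds^2_0\lesssim ds^2$ on tangent vectors reverses on covectors, giving $|\dbar\rho|_{ds^2}\lesssim|\dbar\rho|_{ds^2_0}<\infty$. Your write-up is somewhat more explicit about the Leibniz rule for $\dbar_{\rm max}$ and the dualization of the metric inequality, but there is no substantive difference in the argument.
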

\begin{proof}
	If suffices to show that for every $W\subset\overline{W}\subset U\subset X$ where $W$ and $U$ are small open subsets, there is a positive continuous function $f$ on $U$ such that
	\begin{itemize}
		\item ${\rm supp}(f)\subset \overline{W}$,
		\item $f$ is $C^\infty$ on $U\cap X^o$
		\item $\dbar f$ has bounded fiberwise norm, with respect to the metric $ds^2$.
	\end{itemize}
	Choose a closed embedding $U\subset M$ where $M$ is a smooth complex manifold $M$. Let $V\subset\overline{V}\subset M$ where $V$ is an open subset such that $V\cap U=W$. Let $ds^2_M$ be a hermitian metric on $M$ so that $ds^2_0|_{U\cap X^o}\sim ds^2_M|_{U\cap X^o}$. Let $g$ be a positive smooth function on $M$ whose support lies in $\overline{V}$. Denote $f=g|_U$, then apparently ${\rm supp}(f)\subset \overline{W}$ and $f$ is $C^\infty$ on $U\cap X^o$. It suffices to show the boundedness of the fiberwise norm of $\dbar f$. Since $U\cap X^o\subset M$ is a submanifold, one has the orthogonal decomposition 
	\begin{align}
	T_{M,x}=T_{U\cap X^o,x}\oplus T_{U\cap X^o,x}^\bot, \quad \forall x\in U\cap X^o.
	\end{align} 
	Therefore $|\dbar f|_{ds^2}\lesssim|\dbar f|_{ds^2_0}\leq |\dbar g|_{ds^2_M}<\infty$. The lemma is proved.
\end{proof}
\subsection{$S_X(E,h)$ and its basic properties}
Let $X$ be a complex space of dimension $n$ and $X^o\subset X_{\rm reg}$ a dense Zariski open subset. Let $ds^2$ be a hermitian metric on $X^o$ and $(E,h)$ a hermitian vector bundle on $X^o$.
\begin{defn}
	Define
	\begin{align}
	S_X(E,h):={\rm Ker}\left(\sD_{X,ds^2}^{n,0}(E,h)\stackrel{\dbar}{\to}\sD_{X,ds^2}^{n,1}(E,h)\right).
	\end{align}
\end{defn}
The following proposition shows that $S_X(E,h)$ is independent of $ds^2$. Hence $ds^2$ is omitted in the notation $S_X(E,h)$.
\begin{prop}\label{prop_L2_extension_independent}
	$S_X(E,h)$ is independent of $ds^2$. 
\end{prop}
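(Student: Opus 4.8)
The plan is to reduce the whole statement to a single pointwise identity: for $E$-valued $(n,0)$-forms the $L^2$-integrand $|\alpha|^2_{ds^2,h}\,{\rm vol}_{ds^2}$ does not depend on the choice of hermitian metric $ds^2$ on $X^o$. Granting this, both the local square-integrability condition defining $L^{n,0}_{X,ds^2}(E,h)$ and the kernel condition $\dbar\alpha=0$ become metric-free, and the independence of $S_X(E,h)$ follows at once. More precisely, for two metrics $ds^2_1$ and $ds^2_2$ the associated sheaves both consist of measurable $E$-valued $(n,0)$-forms $\alpha$ on $X^o$ subject to (a) local square-integrability with respect to the corresponding metric and (b) $\dbar\alpha=0$. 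Condition (b) is never at issue: the distributional operator $\dbar$ depends only on the holomorphic structures of $X^o$ and of $E$, and for $\alpha$ in the kernel one has $\dbar\alpha=0\in L^2$ automatically; so it suffices to compare the condition (a) for the two metrics.

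For the pointwise identity I would work in a local holomorphic chart on $X^o$ with coordinates $z_1,\dots,z_n$ together with a local frame $\{e_a\}$ of $E$, and write $ds^2=\sum g_{i\bar j}\,dz_i\otimes d\bar z_j$. Then ${\rm vol}_{ds^2}=\det(g_{i\bar j})\,dV$ with $dV$ the Euclidean volume, while the pointwise norm of the top form satisfies $|dz_1\wedge\cdots\wedge dz_n|^2_{ds^2}=\det(g^{i\bar j})=\det(g_{i\bar j})^{-1}$. Hence, for $\alpha=\sum_a f_a\,dz_1\wedge\cdots\wedge dz_n\otimes e_a$, the two determinants cancel and one obtains
$$|\alpha|^2_{ds^2,h}\,{\rm vol}_{ds^2}=c_n\,\{\alpha,\overline{\alpha}\}_h,$$
where $c_n$ is a universal nonzero constant and $\{\alpha,\overline{\alpha}\}_h$ is the $(n,n)$-form obtained by wedging the differential-form parts of $\alpha$ and $\overline{\alpha}$ and pairing their $E$-parts through $h$. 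The right-hand side involves neither $ds^2$ nor the choice of chart, so it patches to a globally defined positive $(n,n)$-form on $X^o$.

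Combining the two steps: by the identity, the finiteness of $\int_{V_x\cap X^o}|\alpha|^2_{ds^2,h}\,{\rm vol}_{ds^2}$ is unchanged when $ds^2$ is replaced by another hermitian metric, so condition (a) is metric-free as well. Therefore every $\alpha$ belonging to the sheaf defined via $ds^2_1$ also belongs to the one defined via $ds^2_2$, and conversely; the two sheaves coincide, proving the proposition.

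This argument is essentially a bookkeeping computation, so I do not expect a serious obstacle. The only point that requires genuine care—and the reason one cannot argue via the equivalence of metrics as in Lemma \ref{lem_fine_sheaf}—is that the displayed formula is an \emph{exact} equality rather than a comparison up to bounded factors. Since $ds^2_1$ and $ds^2_2$ are defined only on $X^o$ and may be mutually unbounded as one approaches $X\setminus X^o$, a mere inequality $|\alpha|^2_{ds^2_1,h}\,{\rm vol}_{ds^2_1}\lesssim|\alpha|^2_{ds^2_2,h}\,{\rm vol}_{ds^2_2}$ would not suffice to transfer local integrability; the exactness of the cancellation of determinants is precisely what makes the integrability condition transfer on the nose.
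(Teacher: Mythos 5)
Your proof is correct and rests on the same key point as the paper's: for $E$-valued $(n,0)$-forms the integrand $|\alpha|^2_{ds^2,h}\,{\rm vol}_{ds^2}$ does not depend on $ds^2$ (the determinant of the metric cancels against the volume form), so the local square-integrability condition is metric-free while $\dbar$-closedness never involved the metric. The paper packages this identical cancellation inside a detour through a desingularization and a simultaneous diagonalization of the two metrics (its computation with the frame $\delta_i$ and eigenvalues $\lambda_i$ is exactly your determinant cancellation), so your direct coordinate computation is essentially the same argument, if anything slightly cleaner.
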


\begin{proof}
	Let $\pi:\tilde{X}\to X$ be a desingularization of $X\backslash X^o$ so that $\widetilde{X}$ is smooth and $\pi$ is biholomorphic over $X^o$. Let $ds^2_{\tilde{X}}$ be a hermitian metric on $\tilde{X}$. Since $\pi$ is a proper map, a section of $K_{X^o}\otimes E$ is locally square integrable at $x$ if and only if it is locally square integrable near $\pi^{-1}\{x\}$. Thus it suffices to show that 
	\begin{align}\label{align_S_independent}
	{\rm Ker}\left(\sD_{\tilde{X},\pi^\ast ds^2}^{n,0}(E,h)\stackrel{\dbar}{\to}\sD_{\tilde{X},\pi^\ast ds^2}^{n,1}(E,h)\right)={\rm Ker}\left(\sD_{\tilde{X},ds^2_{\tilde{X}}}^{n,0}(E,h)\stackrel{\dbar}{\to}\sD_{\tilde{X},ds^2_{\tilde{X}}}^{n,1}(E,h)\right).
	\end{align}	
	Since the problem is local, we assume that there is an orthogonal frame of cotangent fields $\delta_1,\dots,\delta_n$ such that 
	\begin{align}\label{align_lem_metric1}
	\pi^\ast ds^2\sim\lambda_1\delta_1\overline{\delta_1}+\cdots+\lambda_n\delta_n\overline{\delta_n}
	\end{align}
	and
	\begin{align}\label{align_lem_metric2}
	ds^2_{\tilde{X}}\sim\delta_1\overline{\delta_1}+\cdots+\delta_n\overline{\delta_n}.
	\end{align}
	Let $s=\delta_1\wedge\cdots\wedge\delta_n\otimes\xi$.
	By (\ref{align_lem_metric1}) and (\ref{align_lem_metric2}) we obtain
	\begin{align}\label{align_ind_ds2}
	\|s\|^2_{\pi^\ast ds^2,h}&=\int|\delta_1\wedge\cdots\wedge\delta_n\otimes\xi|^2_{\pi^\ast ds^2,h}\prod_{i=1}^n\lambda_i\delta_i\wedge\overline{\delta_i}\\\nonumber
	&=\int|\xi|^2_{h}\prod_{i=1}^n\delta_i\wedge\overline{\delta_i}\\\nonumber
	&=\|s\|^2_{ds^2_{\tilde{X}},h}.
	\end{align}
	Therefore $\|s\|^2_{\pi^\ast ds^2,h}$ is locally finite if and only if $\|s\|^2_{ds^2_{\tilde{X}},h}$ is locally finite. This proves (\ref{align_S_independent}).
\end{proof}
\begin{prop}[Functoriality]\label{prop_L2ext_birational}
	Let $\pi:X'\to X$ be a proper holomorphic map between complex spaces which is biholomorphic over $X^o$. Then $$\pi_\ast S_{X'}(\pi^\ast E,\pi^\ast h)=S_X(E,h).$$
\end{prop}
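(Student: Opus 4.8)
The plan is to prove the sheaf identity $\pi_\ast S_{X'}(\pi^\ast E,\pi^\ast h)=S_X(E,h)$ by checking it on sufficiently small open subsets $U\subset X$ and comparing the two spaces of $L^2$-holomorphic top-forms section-by-section. Since both sides are subsheaves of $j_\ast(K_{X^o}\otimes E)$ (using that $\pi$ is biholomorphic over $X^o$, so $\pi^{-1}X^o\cong X^o$ and $\pi^\ast E|_{\pi^{-1}X^o}$ is canonically identified with $E$), the only issue is which holomorphic sections over $U\cap X^o$ extend, in the $L^2$-sense, across the exceptional locus on each side. Concretely, a section $\alpha$ of $K_{X^o}\otimes E$ over $U\cap X^o$ lies in $S_X(E,h)(U)$ iff it is locally square-integrable at every point of $U$, and lies in $(\pi_\ast S_{X'}(\pi^\ast E,\pi^\ast h))(U)=S_{X'}(\pi^\ast E,\pi^\ast h)(\pi^{-1}U)$ iff its pullback is locally square-integrable at every point of $\pi^{-1}U$.

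\textbf{The key step} is therefore the following local-integrability equivalence: for $x\in U$ and a section $\alpha$ holomorphic on $U\cap X^o$, $\alpha$ is $L^2$ in a neighborhood of $x$ if and only if $\pi^\ast\alpha$ is $L^2$ in a neighborhood of the fiber $\pi^{-1}\{x\}$. By Proposition \ref{prop_L2_extension_independent} the $L^2$-condition defining $S$ is independent of the choice of hermitian metric, so I am free to compute both integrals with convenient metrics. Choose any hermitian metric $ds^2$ on $X^o$ and use $\pi^\ast ds^2$ on $\pi^{-1}X^o\cong X^o$ for the $X'$ side; then $\pi$ restricts to a biholomorphism $\pi^{-1}X^o\to X^o$ that is an \emph{isometry} for these metrics and carries $(\pi^\ast E,\pi^\ast h)$ to $(E,h)$, so the two pointwise norms $|\pi^\ast\alpha|^2_{\pi^\ast ds^2,\pi^\ast h}$ and $|\alpha|^2_{ds^2,h}$ agree under the change of variables and the volume forms match. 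Hence for any open $V\subset X^o$,
\begin{align*}
\int_{\pi^{-1}V}|\pi^\ast\alpha|^2_{\pi^\ast ds^2,\pi^\ast h}\,{\rm vol}_{\pi^\ast ds^2}=\int_{V}|\alpha|^2_{ds^2,h}\,{\rm vol}_{ds^2},
\end{align*}
because $\pi^{-1}X^o\to X^o$ is a measure-preserving biholomorphism. Since $\pi$ is proper, $\pi^{-1}\{x\}$ is compact, so a neighborhood of $\pi^{-1}\{x\}$ in $\pi^{-1}U$ contains $\pi^{-1}(V_x)$ for some neighborhood $V_x$ of $x$, and conversely the image under $\pi$ of a neighborhood of the fiber contains a neighborhood of $x$; combined with the displayed equality of integrals this yields the claimed equivalence of local square-integrability on the two sides.

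\textbf{The main obstacle} I anticipate is handling the comparison cleanly at points $x\notin X^o$, where neither side is computed on $X^o$ directly but via the $L^2$-extension condition across the singular/exceptional set: one must verify that ``locally $L^2$ near $x$'' on $X$ translates, via properness, precisely to ``locally $L^2$ near the compact fiber $\pi^{-1}\{x\}$'' on $X'$, with no contribution lost or gained over the measure-zero exceptional locus $\pi^{-1}\{x\}\setminus\pi^{-1}X^o$ (which has measure zero and so does not affect the integrals). The properness of $\pi$ is exactly what makes this local-to-local passage work, as it guarantees that a fundamental system of neighborhoods of $\pi^{-1}\{x\}$ is obtained by pulling back a fundamental system of neighborhoods of $x$. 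Once this neighborhood-basis comparison and the measure-preserving identity of integrals are in place, the equality of the two spaces of sections over every small $U$, hence the equality of sheaves, follows; one should also note that the identification is compatible with the $\sO_X$-module structure since it is induced by the canonical biholomorphism over $X^o$, so no further naturality check is needed.
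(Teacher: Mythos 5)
Your proposal is correct and follows essentially the same route as the paper: invoke Proposition \ref{prop_L2_extension_independent} to compute $S_{X'}(\pi^\ast E,\pi^\ast h)$ with the pulled-back metric $\pi^\ast ds^2$, and then use properness of $\pi$ to identify local square-integrability at $x$ with local square-integrability near the compact fiber $\pi^{-1}\{x\}$. You simply spell out in more detail the change-of-variables identity and the neighborhood-basis comparison that the paper leaves implicit.
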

\begin{proof}
	It follows from Proposition \ref{prop_L2_extension_independent} that 
	$$S_{X'}(\pi^\ast E,\pi^\ast h)={\rm Ker}\left(\sD_{X',\pi^\ast ds^2}^{n,0}(\pi^\ast E,\pi^\ast h)\stackrel{\dbar}{\to}\sD_{X',\pi^\ast ds^2}^{n,1}(\pi^\ast E,\pi^\ast h)\right)$$
	and
	$$S_{X}(E,h)={\rm Ker}\left(\sD_{X, ds^2}^{n,0}(E,h)\stackrel{\dbar}{\to}\sD_{X, ds^2}^{n,1}(E,h)\right).$$
	Since $\pi$ is a proper map, a section of $E$ is locally square integrable at $x$ if and only if it is locally square integrable near $\pi^{-1}\{x\}$. This proves the lemma.
\end{proof}
The following simple lemma is important for applications. Roughly speaking, it says that one can shrink the domain of $(E,h)$ without changing $S_X(E,h)$. This phenomenon also appears in Saito's $S(M)$.
\begin{lem}
	Let $U\subset X^o$ be a dense Zariski open subset. Then
	$$S_X(E,h)\simeq S_X(E|_U,h|_U).$$
\end{lem}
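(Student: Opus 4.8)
The plan is to exhibit the natural restriction morphism and show it is an isomorphism of sheaves, the surjectivity being the only nontrivial point.

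First I would set $Z:=X^o\setminus U$, a Zariski closed subset of the complex manifold $X^o$; since $U$ is dense in $X^o$, $Z$ is a \emph{proper} analytic subset, hence of codimension $\geq 1$ and of Lebesgue measure zero. Because $E$ and $h$ are defined and smooth on all of $X^o$ (in particular near $Z$), restriction of forms from $V\cap X^o$ to $V\cap U$ induces, for every open $V\subset X$, a morphism of sheaves $\rho:S_X(E,h)\to S_X(E|_U,h|_U)$: a holomorphic $E$-valued $n$-form on $V\cap X^o$ that is locally square integrable near every point of $V$ restricts to one on $V\cap U$ with the same property. Injectivity of $\rho$ is then immediate: if a local section $\alpha$ of $S_X(E,h)$ restricts to $0$ on $V\cap U$, then, since $V\cap U$ is dense in the complex manifold $V\cap X^o$ and $\alpha$ is holomorphic, the identity theorem forces $\alpha=0$.

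For surjectivity I would take $\beta\in S_X(E|_U,h|_U)(V)$, a holomorphic section of $K_{X^o}\otimes E$ over $V\cap U$ which is locally $L^2$ near every point of $V$, and extend it holomorphically across $Z$. This is a removable singularity problem on the manifold $X^o$: choosing local coordinates and a local holomorphic frame of $E$ in which $h$ is bounded above and below, the components of $\beta$ become $L^2_{\rm loc}$ holomorphic functions on the complement of the analytic set $Z$. By the $L^2$ version of the Riemann extension theorem --- one first extends across the smooth locus $Z_{\rm reg}$, where locally $Z=\{z_n=0\}$, by Fubini together with the one--variable Laurent estimate $\int_{|z_n|<1}|z_n|^{2k}\,dA<\infty\iff k>-1$, and then across the remaining codimension $\geq 2$ set $Z_{\rm sing}$ by the second Riemann extension theorem --- each component extends holomorphically. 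Hence $\beta$ extends to a holomorphic section $\alpha$ of $K_{X^o}\otimes E$ over $V\cap X^o$ with $\alpha|_{V\cap U}=\beta$.

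It remains to check $\alpha\in S_X(E,h)(V)$, i.e. that $\alpha$ is locally $L^2$ near every point of $V$. This is where the measure--zero property of $Z$ enters: for any $x\in V$ and small neighborhood $V_x$ one has $\int_{V_x\cap X^o}|\alpha|^2_{ds^2,h}\,{\rm vol}_{ds^2}=\int_{V_x\cap U}|\beta|^2_{ds^2,h}\,{\rm vol}_{ds^2}<\infty$, because $X^o$ and $U$ differ by the null set $Z$ and $\alpha=\beta$ there; the choice of $ds^2$ is irrelevant by Proposition \ref{prop_L2_extension_independent}. Since $\alpha$ is holomorphic it is $\dbar_{\rm max}$-closed, so $\alpha$ indeed lies in $S_X(E,h)(V)$ and $\rho(\alpha)=\beta$, giving surjectivity and hence the isomorphism. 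The main obstacle is precisely this surjectivity step, whose analytic heart is the $L^2$ removable singularity theorem across an analytic subset of codimension $\geq 1$; everything else is formal, and in particular no analysis of the singularities of $X$ itself is needed, since the comparison takes place entirely on the smooth locus $X^o$.
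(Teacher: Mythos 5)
Your proof is correct and rests on the same analytic fact as the paper's: the paper disposes of the lemma in one line by citing Berndtsson's result that a locally $L^2$ form which is $\dbar$-closed away from an analytic subset is $\dbar$-closed everywhere, while you prove the equivalent $L^2$ Riemann removable-singularity statement for the holomorphic components directly (Fubini plus the Laurent estimate across $Z_{\rm reg}$, then the second Riemann extension theorem across the codimension $\geq 2$ set $Z_{\rm sing}$), together with the routine verification that the extension stays in $S_X(E,h)$ because $Z$ has measure zero. The only real difference is that your argument is self-contained where the paper's is a citation.
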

\begin{proof}
	This is a consequence of the fact that if a locally $L^2$ function on a hermitian manifold is $\dbar$-closed away from a Zariski open subset, then it is $\dbar$-closed over the whole manifold (\cite[Lemma 1.3]{Berndtsson2010}).
\end{proof}
\begin{lem}\label{lem_Kernel}
	Let $(F,h_F)$ be a hermitian vector bundle on $X$, then
	$$S_X(E,h)\otimes F\simeq S_X(E\otimes F,h\otimes h_{F}).$$
\end{lem}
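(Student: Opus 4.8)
The plan is to prove the isomorphism $S_X(E,h)\otimes F\simeq S_X(E\otimes F,h\otimes h_F)$ by exhibiting a natural map and checking it is an isomorphism at the level of local square-integrable sections, using the fact that $(F,h_F)$ is a bundle defined on all of $X$ and is therefore locally bounded above and below. Since $S_X$ is independent of the choice of hermitian metric $ds^2$ by Proposition \ref{prop_L2_extension_independent}, I am free to fix any convenient $ds^2$ on $X^o$ throughout. The key structural point is that $F$, being a genuine holomorphic vector bundle on the ambient complex space $X$ (not merely on $X^o$), admits on a sufficiently small neighborhood $U$ of any point a holomorphic frame $e_1,\dots,e_r$; with respect to this frame the metric $h_F$ is comparable to the standard one, i.e. there exist constants $0<c\leq C$ with $c\,\mathrm{Id}\leq (h_F(e_i,e_j))\leq C\,\mathrm{Id}$ on $U\cap X^o$ after possibly shrinking $U$ and using continuity of $h_F$ on $U$.

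The first step is to define the morphism. A local section of $S_X(E,h)\otimes F$ over $U$ can be written, using the holomorphic frame of $F$, as $\sum_i s_i\otimes e_i$ with each $s_i$ a local section of $S_X(E,h)$; the natural map sends this to $\sum_i s_i\cdot e_i$ viewed as a section of $K_{X^o}\otimes(E\otimes F)$, where $s_i\cdot e_i$ multiplies the $E$-part of $s_i$ by the holomorphic vector $e_i$. Because the $e_i$ are holomorphic, this assignment commutes with $\dbar$, so $\dbar$-closed sections go to $\dbar$-closed sections and the map is well defined at the level of the kernel sheaves $S_X$. The second step is to check this map lands in $S_X(E\otimes F,h\otimes h_F)$ and is an isomorphism by a pointwise $L^2$ comparison: for $\alpha=\sum_i s_i\otimes e_i$ one has, fiberwise, the elementary bound
\begin{equation*}
c\sum_i|s_i|^2_{ds^2,h}\;\leq\;\Bigl|\sum_i s_i\cdot e_i\Bigr|^2_{ds^2,\,h\otimes h_F}\;\leq\;C'\sum_i|s_i|^2_{ds^2,h}
\end{equation*}
for suitable constants depending only on the frame and on $h_F$ on $U$. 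Integrating against $\mathrm{vol}_{ds^2}$ over $V_x\cap X^o$ shows that $\sum_i s_i\otimes e_i$ is locally square integrable with respect to $h\otimes h_F$ if and only if each $s_i$ is locally square integrable with respect to $h$, which is exactly the condition defining membership in the tensor product sheaf. Holomorphy of the frame guarantees that the $\dbar$-closedness conditions match up as well.

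The third step is to verify that the locally defined map is independent of the chosen holomorphic frame, hence glues to a global morphism of sheaves, and that it is bijective. Independence of the frame follows because a change of frame is given by a holomorphic $\mathrm{GL}_r$-valued transition matrix, and both sides transform by the same matrix under such a change; the square-integrability comparison above holds uniformly for any frame because any two local frames of $F$ differ by a bounded (with bounded inverse) holomorphic matrix on a small enough neighborhood. Surjectivity is seen by running the correspondence backwards: any local section of $S_X(E\otimes F)$ can be expanded in the frame $e_1,\dots,e_r$ of $F$ with coefficients that are $(n,0)$-forms valued in $E$, and the $L^2$ bound forces each coefficient to lie in $S_X(E,h)$, while $\dbar$-closedness of the whole forces $\dbar$-closedness of each coefficient since the $e_i$ are holomorphic and linearly independent.

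I expect the main obstacle to be purely bookkeeping rather than conceptual: one must be careful that the comparison constants $c,C$ for $h_F$ are genuinely local and uniform on $X^o\cap U$ even though $X^o$ omits the singular locus and the non-generic part of $X_{\mathrm{reg}}$, so that integrability over the small chart $V_x\cap X^o$ transfers cleanly. Since $F$ and $h_F$ are defined on all of $X$ and $h_F$ is continuous there, its local boundedness on a relatively compact neighborhood is automatic and does not see the boundary $X\setminus X^o$; this is precisely why the hypothesis that $F$ lives on $X$ (rather than only on $X^o$) is used. Granting this, the isomorphism is local, frame-independent, and compatible with $\dbar$, which completes the proof.
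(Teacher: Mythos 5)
Your proof is correct and follows essentially the same route as the paper: trivialize $F$ locally by a holomorphic frame, observe that $h_F$ is quasi-isometric to the standard metric on a small neighborhood, and conclude that local square-integrability (and $\dbar$-closedness) of $\sum_i s_i\otimes e_i$ is equivalent to that of each component $s_i$. The extra care you take with frame independence and surjectivity is fine but is exactly the bookkeeping the paper leaves implicit.
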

\begin{proof}
	Let $x\in X$ be a point and let $U$ be an open neighborhood of $x$ so that $F|_U\simeq \sO_U^{\oplus r}$ and $h_F$ is quasi-isometric to the trivial metric, i.e.
	$$|\sum_{i=1}^ra_ie_i|^2_{h_F}\sim \sum_{i=1}^r\|a_i\|^2$$
	where $\{e_i\}$ is the standard frame of $\sO_U^{\oplus r}$ and $a_i$s are measurable functions on $U\cap X^o$.
	Let $ds^2$ be an arbitrary hermitian metric on $X^o$ and let $\alpha=\sum_{i=1}^r\alpha_i\otimes e_i$ be a measurable section of $(K_{X^o}\otimes E\otimes F)|_{U\cap X^o}$. Then
	$$\|\alpha\|^2_{ds^2,h\otimes h_F}\sim\sum_{i=1}^r\|\alpha_i\|^2_{ds^2,h}$$
	is finite if and only if $\|\alpha_i\|^2_{ds^2,h}$ is finite for each $i=1,\dots,r$. This proves the lemma.
\end{proof}

At the end of this section, we show that $S_X(E,h)$ is a coherent sheaf if the dual metric $h^\ast$ has at most polynomially growth at the boundary $X\backslash X^o$ and $(E,h)$ is Nakano semi-positive.

Let $Y$ be a complex manifold and $E$ a holomorphic vector bundle on $Y$. Let $\Theta\in A^{1,1}(Y, {\rm End}(E))$. Denote $\Theta\geq0$ if $\Theta$ is Nakano semi-positive, i.e.
\begin{align}
\Theta(v,v)\geq0,\quad\forall v\in \Gamma(T_Y\otimes E).
\end{align}
Let $\Theta_1,\Theta_2\in A^{1,1}(Y, {\rm End}(E))$, then $\Theta_1\geq\Theta_2$ stands for $\Theta_1-\Theta_2\geq0$. A hermitian vector bundle $(E,h)$ is Nakano semi-positive if $\sqrt{-1}\Theta_h(E)\geq0$.
\begin{defn}\label{defn_tame_hermitian_bundle}
	$(E,h)$ has tame singularity on $X$ if, for every point $x\in X$, there is an open neighborhood $U$, a proper bimeromorphic morphism $\pi:\widetilde{U}\to U$ which is biholomorphic over $U\cap X^o$, and a hermitian vector bundle $(Q,h_Q)$ on $\widetilde{U}$ such that 
	\begin{enumerate}
		\item $\pi^\ast E|_{\pi^{-1}(X^o\cap U)}\subset Q|_{\pi^{-1}(X^o\cap U)}$ as a subsheaf.
		\item There is a hermitian metric $h'_Q$ on $Q|_{\pi^{-1}(X^o\cap U)}$ so that $h'_Q|_{\pi^\ast E}\sim \pi^\ast h$ on $\pi^{-1}(X^o\cap U)$ and
		\begin{align}\label{align_tame}
		(\sum_{i=1}^r\|\pi^\ast f_i\|^2)^ch_Q\lesssim h'_Q
		\end{align}
		for some $c\in\bR$. Here $(f_1,\dots,f_r)$ is an arbitrary local generator of the ideal sheaf defining $\widetilde{U}\backslash \pi^{-1}X^o\subset \widetilde{U}$.
	\end{enumerate}
\end{defn}
The tameness condition (\ref{align_tame}) is independent of the choice of the local generator. In the present paper, a tame hermitian vector bundle $(E,h)$ is constructed as a subsheaf of a tame harmonic bundle (see \S \ref{section_CVHS_Kollar}, especially Proposition \ref{prop_CVHS_Ssheaf}) in the sense of Simpson \cite{Simpson1990} and T. Mochizuki \cite{Mochizuki20072,Mochizuki20071}. In this case, condition (\ref{align_tame}) comes from the tameness condition of the harmonic bundle. This is the origin of the name "tame hermitian vector bundle".

\begin{prop}\label{prop_S_coherent}
	$S_X(E,h)$ is a coherent sheaf if $(E,h)$ is Nakano semi-positive and tame on $X$.
\end{prop}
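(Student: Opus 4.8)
The plan is to reduce the statement to a smooth model with normal crossing boundary, to trap $S_X(E,h)$ inside a genuinely coherent sheaf by exploiting tameness, and then to establish local finite generation by a Nadel-type argument in which Nakano semi-positivity supplies the required $L^2$-solvability of $\dbar$. Since coherence is local, I would fix $x\in X$ and argue on a small neighborhood. If $x\in X^o$ then $S_X(E,h)$ coincides with $K_{X^o}\otimes E$ near $x$ (every germ of a holomorphic $E$-valued $n$-form is automatically square integrable over a relatively compact set), which is locally free. So assume $x\in X\setminus X^o$ and pick a neighborhood $U$. Tameness (Definition \ref{defn_tame_hermitian_bundle}) provides a proper bimeromorphic $\pi\colon\widetilde U\to U$, biholomorphic over $X^o\cap U$; after composing with a resolution I may assume $\widetilde U$ is smooth and $D:=\widetilde U\setminus\pi^{-1}(X^o\cap U)$ is a normal crossing divisor with local equation $z_1\cdots z_k=0$. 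Nakano semi-positivity and tameness pass to the pullback $(\pi^\ast E,\pi^\ast h)$, and Proposition \ref{prop_L2ext_birational} gives $S_U(E,h)=\pi_\ast S_{\widetilde U}(\pi^\ast E,\pi^\ast h)$. As $\pi$ is proper, Grauert's direct image theorem reduces everything to proving that $S_{\widetilde U}(\pi^\ast E,\pi^\ast h)$ is coherent. Thus I may assume $X=Y$ is smooth, $D=Y\setminus Y^o$ is normal crossing, and $(E,h)\subset(Q,h_Q)$ with $h\sim h'_Q$ and $(\sum_i|z_i|^2)^{c}h_Q\lesssim h'_Q$ as in (\ref{align_tame}).

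Next I would exhibit a coherent sheaf containing $S_Y(E,h)$. A germ of $S_Y(E,h)$ has the form $s=dz_1\wedge\cdots\wedge dz_n\otimes\xi$ with $\xi$ holomorphic and $E$-valued on $Y^o$, and the local square integrability reads $\int|\xi|^2_h\,\mathrm{vol}<\infty$. Combined with the tameness inequality this yields $\int(\sum_i|z_i|^2)^{c}|\xi|^2_{h_Q}\,\mathrm{vol}<\infty$, an $L^2$ bound for the holomorphic section $\xi$ of $Q|_{Y^o}$ against a weight with at worst logarithmic poles along $D$. A Laurent-expansion estimate then shows that $\xi$ extends to a meromorphic section of $Q$ whose pole order along each branch of $D$ is bounded in terms of $c$; hence $S_Y(E,h)\hookrightarrow K_Y\otimes Q(mD)=:\sG$ for some $m=m(c)$, and $\sG$ is coherent. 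In particular each stalk $S_Y(E,h)_x$ is a submodule of the finitely generated $\sO_{Y,x}$-module $\sG_x$ over a Noetherian local ring, hence finitely generated.

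Finally, with $S_Y(E,h)$ trapped in the coherent (hence strongly Noetherian) sheaf $\sG$, I would prove local finite generation following Nadel. Fix $x$ and a small Stein neighborhood $B$; let $\sH_x$ be the Hilbert space of $E$-valued holomorphic $n$-forms on $B$ that are $L^2$ for $h$, and let $\sJ\subset S_Y(E,h)$ be the subsheaf generated by $\sH_x$. By the strong Noetherian property, $\sJ$ is generated by finitely many $g_1,\dots,g_N\in\sH_x$, so $\sJ$ is the image of $\sO_B^N\to\sG$ and is coherent; the only remaining issue is the equality $\sJ_x=S_Y(E,h)_x$. Given $f\in S_Y(E,h)_x$ and a cut-off $\theta\equiv1$ near $x$, I would solve $\dbar u=f\,\dbar\theta$ with the singular weight $h$ further twisted by $2(n+j)\log|z-x|$: the logarithmic term forces $u$ to vanish to order $>j$ at $x$, while the Nakano semi-positivity of $(E,h)$ on $Y^o$, together with a strictly plurisubharmonic perturbation on the pseudoconvex $B$, supplies the H\"ormander--Andreotti--Vesentini--Demailly estimate guaranteeing an $L^2$ solution $u$. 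Then $f\theta-u\in\sH_x$ agrees with $f$ to order $j$, so $f\in\sJ_x+\mathfrak m_x^{\,j}\sG_x$ for every $j$; Artin--Rees and the Krull intersection theorem (applied to the finitely generated modules $\sJ_x\subset S_Y(E,h)_x\subset\sG_x$) then give $f\in\sJ_x$, whence $S_Y(E,h)=\sJ$ is coherent.

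I expect the main obstacle to be this last step, specifically running the $L^2$-estimate across the boundary $D$: the metric $h$ on $E$ is singular and $E$ itself does not extend as a bundle over $D$, so the H\"ormander estimate must be set up on the coherent model $K_Y\otimes Q(mD)$ with a singular Nakano semi-positive metric assembled from $h_Q$, the natural metric on $\sO_Y(mD)$, and the logarithmic weight. Verifying that the resulting curvature current is semi-positive in the Nakano sense — so that the $L^2$ solution stays square integrable for $h$ and the produced section genuinely lands in $\sH_x$ — is the delicate point, and is exactly where Nakano semi-positivity of $(E,h)$, rather than mere tameness, is indispensable.
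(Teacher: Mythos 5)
Your proposal follows essentially the same route as the paper's proof: reduce to a smooth model with simple normal crossing boundary via Proposition \ref{prop_L2ext_birational}, trap $S_X(E,h)$ inside a coherent sheaf built from $Q$ using the tameness estimate, and close the argument with the strong Noetherian property, Krull's intersection theorem, and a H\"ormander--Demailly solution of $\dbar u=f\,\dbar\theta$ against the weight $2(n+k)\log|z-x|$ (the paper adds $rm$ to this exponent so that $h$-integrability translates into vanishing order measured in the frame of $Q$, a harmless index shift in your Krull argument). The obstacle you single out at the end does not actually arise in the paper's execution: the $L^2$ estimate is run entirely on $U\cap X^o$, which admits a complete K\"ahler metric by Lemma \ref{lem_complete_metric_exists_locally}, with the smooth Nakano semi-positive metric $e^{-\psi_k}h$ on $E$ itself, so no singular extension of the metric across $D$ is ever needed.
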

\begin{proof}
	Since the problem is local, we assume that $X$ is a germ of complex space.
	Let $\pi:\widetilde{X}\to X$ be a desingularization so that $\pi$ is biholomorphic over $X^o$ and $D:=\pi^{-1}(X\backslash X^o)$ is a simple normal crossing divisor. By abuse of notations we regard $X^o\subset \widetilde{X}$ as a subset. Since $(E,h)$ is tame, we assume that there is a hermitian vector bundle $(Q,h_Q)$ on $\widetilde{X}$ such that $E$ is a subsheaf of $Q|_{X^o}$ and there exists $m\in \bN$ such that 
	\begin{align}\label{align_tame_1}
	\|z_1\cdots z_r\|^{2m}h_Q|_{E}\lesssim \pi^\ast h
	\end{align}
	where $(z_1,\cdots,z_n)$ is a local coordinate of $\widetilde{X}$ so that $D=\{z_1\cdots z_r=0\}$. There is moreover a hermitian metric $h'_Q$ on $Q|_{X^o}$ so that $h'_Q|_{E}\sim h$.
    By Proposition \ref{prop_L2ext_birational} there is an isomorphism
	\begin{align}
	S_X(E,h)\simeq \pi_\ast\left(S_{\widetilde{X}}(E,h)\right).
	\end{align}
	Since $\pi$ is a proper map, it suffices to show that $S_{\widetilde{X}}(E,h)$ is a coherent sheaf on $\widetilde{X}$. Since the problem is local and $\widetilde{X}$ is smooth, we may assume that $\widetilde{X}\subset\bC^n$ is the unit ball so that $D=\{z_1\cdots z_r=0\}$. Without loss of generality we assume that $Q$ admits a global holomorphic frame $e_1,\dots,e_{l}$ which is orthonormal with respect to $h_Q$, i.e.
	\begin{align}\label{align_orthogonal_frame}
	\langle e_i,e_j\rangle_{h_Q}=\begin{cases}
	1, & i=j \\
	0, & i\neq j
	\end{cases}.
	\end{align}
There is a complete K\"ahler metric on $X^o$ by Lemma \ref{lem_complete_metric_exists_locally}. Since $Q$ is coherent\footnote{We do not distinguish holomorphic vector bundle and its sheaf of holomorphic sections.},  the space $\Gamma(\widetilde{X},S_{\widetilde{X}}(E,h))$ generates a coherent subsheaf $\sJ$ of $Q$ by strong Noetherian property of coherent sheaves. We have the inclusion 
	$\sJ\subset S_{\widetilde{X}}(E,h)$ by the construction. It remains to prove the converse. By the Krull's theorem (\cite[Corollary 10.19]{Atiyah1969}), it suffices to show that
	\begin{align}\label{align_Krull}
	\sJ_x+S_{\widetilde{X}}(E,h)_x\cap m_{\widetilde{X},x}^{k+1}Q=S_{\widetilde{X}}(E,h)_x,\quad\forall k\geq0,\forall x\in\widetilde{X}.
	\end{align}
	Let $\alpha\in S_{\widetilde{X}}(E,h)_x$ be defined in a precompact neighborhood $V$ of $x$. Choose a $C^\infty$ cut-off function $\lambda$ so that $\lambda\equiv1$ near $x$ and ${\rm supp}\lambda\subset V$. Denote $\|z\|^2:=\sum_{i=1}^n\|z_i\|^2$. 
	Let
	\begin{align}
	\psi_k(z):=2(n+k+rm)\log|z-x|+\|z\|^2
	\end{align}
	and $h_{\psi_k}=e^{-\psi_k}h$. Denote $\omega_0:=\sqrt{-1}\ddbar\|z\|^2$, then
	\begin{align}
	\sqrt{-1}\Theta_{h_{\psi_k}}(E)= \sqrt{-1}\ddbar\psi_k+\sqrt{-1}\Theta_{h}(E)\geq \omega_0.
	\end{align}
	Since ${\rm supp}\lambda\alpha\subset V$ and $\dbar(\lambda\alpha)=0$ near $x$, we know that
	\begin{align}
	\|\dbar(\lambda\alpha)\|^2_{\omega_0,h_{\psi_k}}\sim \|\dbar(\lambda\alpha)\|^2_{\omega_0,h}\leq\|\dbar\lambda\|^2_{L^\infty}\|\alpha\|^2_{\omega_0,h}+\|\lambda\|^2\|\dbar\alpha\|^2_{\omega_0,h}<\infty
	\end{align}
	Hence Theorem \ref{thm_Hormander_incomplete} gives a solution of the equation $\dbar\beta=\dbar(\lambda\alpha)$ so that
	\begin{align}\label{align_norm_beta_psi}
	\|\beta\|^2_{\omega_0,h}\lesssim\int_{X^o}|\beta|^2_{\omega_0,h}|z-x|^{-2(n+k+rm)}{\rm vol}_{\omega_0}\lesssim \|\dbar(\lambda\alpha)\|^2_{\omega_0,h_{\psi_k}}<\infty.
	\end{align}
	Thus $\gamma=\beta-\lambda\alpha$ is holomorphic and $\gamma\in \Gamma(\widetilde{X},S_{\widetilde{X}}(E,h))$. 
	
    Assume that
	$$\beta=\sum_{i=1}^l f_ie_idz_1\wedge\cdots\wedge dz_n$$
	for some holormorphic functions $f_1,\dots,f_l\in\sO_{\widetilde{X}}(X^o)$.
	By (\ref{align_tame_1}) we obtain that 
	\begin{align}
	&\sum_{i=1}^l\int_{X^o}|f_i|^2\|z_1\cdots z_r\|^{2m}|z-x|^{-2(n+k+rm)}{\rm vol}_{\omega_0}\\\nonumber
	=&\int_{X^o}|\beta|^2_{\omega_0,h_Q}\|z_1\cdots z_r\|^{2m}|z-x|^{-2(n+k+rm)}{\rm vol}_{\omega_0}\\\nonumber
	\lesssim&\int_{X^o}|\beta|^2_{\omega_0,h}|z-x|^{-2(n+k+rm)}{\rm vol}_{\omega_0}<\infty.
	\end{align}
	This implies that $z_1^m\cdots z_r^m f_i\in m_{\widetilde{X},x}^{k+1+rm}$ for every $i=1,\dots,l$ (\cite[Lemma 5.6]{Demailly2012}). Therefore $\beta_x\in m_{\widetilde{X},x}^{k+1}Q$ and we prove (\ref{align_Krull}).
\end{proof}

\subsection{$L^2$-Dolbeault resolution of $S_X(E,h)$}
In this subsection we introduce an $L^2$-Dolbeault resolution of $S_X(E,h)$. 

First we recall the following useful  estimate.
\begin{thm}{\cite[Theorem 5.1]{Demailly1982}}\label{thm_Hormander_incomplete}
	Let $Y$ be a complex manifold of dimension $n$ which admits a complete K\"ahler metric. Let $(E,h)$ be a hermitian vector bundle such that $$\sqrt{-1}\Theta_{h}(E)\geq \omega\otimes {\rm Id}_{E}$$ for some (not necessarily complete) K\"ahler form $\omega$ on $Y$. Then for every $q>0$ and every $\alpha\in L^{n,q}_{(2)}(Y,E;\omega,h)$ such that $\dbar\alpha=0$, there is $\beta\in L^{n,q-1}_{(2)}(Y,E;\omega,h)$ such that $\dbar\beta=\alpha$ and $\|\beta\|^2_{\omega,h}\leq q^{-1}\|\alpha\|^2_{\omega,h}$.
\end{thm}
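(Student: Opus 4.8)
The plan is to deduce the statement from the Bochner--Kodaira--Nakano identity together with H\"ormander's functional-analytic existence theorem for $\dbar$. The one genuine difficulty is that the K\"ahler form $\omega$ governing the curvature hypothesis and the norms need not be complete, whereas the standard existence theorem requires completeness: it is completeness that makes compactly supported smooth forms dense in the graph-norm domains of $\dbar$ and $\dbar^\ast$, so that the a priori estimate extends to the full domain with no boundary term. I would therefore fix, once and for all, an auxiliary complete K\"ahler metric $\hat\omega$ on $Y$ (which exists by hypothesis) and regularize.

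Set $\omega_\varepsilon:=\omega+\varepsilon\hat\omega$ for $\varepsilon>0$. Each $\omega_\varepsilon$ dominates the complete metric $\varepsilon\hat\omega$, hence is itself complete K\"ahler, while $\omega_\varepsilon\geq\omega$ and $\omega_\varepsilon\downarrow\omega$ as $\varepsilon\to0$. On the complete manifold $(Y,\omega_\varepsilon)$ the Bochner--Kodaira--Nakano inequality combined with H\"ormander's theorem produces, for the $\dbar$-closed datum $\alpha$, a solution $\beta_\varepsilon$ of $\dbar\beta_\varepsilon=\alpha$ subject to
$$\|\beta_\varepsilon\|^2_{\omega_\varepsilon,h}\ \leq\ \int_Y\big\langle B_\varepsilon^{-1}\alpha,\alpha\big\rangle_{\omega_\varepsilon,h}\,{\rm vol}_{\omega_\varepsilon},$$
where $B_\varepsilon:=[\sqrt{-1}\Theta_h(E),\Lambda_{\omega_\varepsilon}]$ is the curvature operator, which is positive (hence invertible) on $(n,q)$-forms.

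The heart of the matter is a pointwise curvature estimate that is \emph{uniform in} $\varepsilon$. In the model situation in which $\sqrt{-1}\Theta_h(E)$, $\omega$ and $\hat\omega$ are simultaneously diagonalizable at a point, write the eigenvalues of $\sqrt{-1}\Theta_h(E)$ relative to $\omega$ as $\gamma_1,\dots,\gamma_n$ (all $\geq1$ by the Nakano hypothesis $\sqrt{-1}\Theta_h(E)\geq\omega\otimes{\rm Id}$) and those of $\omega_\varepsilon$ relative to $\omega$ as $\rho_j=1+\varepsilon\hat\lambda_j\geq1$; the eigenvalues of $\sqrt{-1}\Theta_h(E)$ relative to $\omega_\varepsilon$ are then $\gamma_j/\rho_j$. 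On an $(n,q)$-component indexed by a multi-index $J$ of length $q$, the estimate reduces to the elementary inequality
$$\Big(\sum_{j\in J}\tfrac{\gamma_j}{\rho_j}\Big)\Big(\prod_{j\in J}\rho_j\Big)\ \geq\ \sum_{j\in J}\prod_{k\in J\setminus\{j\}}\rho_k\ \geq\ q,$$
which, after accounting for how the fiber norm and the volume form of an $(n,q)$-form rescale under $\omega\leq\omega_\varepsilon$, yields
$$\big\langle B_\varepsilon^{-1}\alpha,\alpha\big\rangle_{\omega_\varepsilon,h}\,{\rm vol}_{\omega_\varepsilon}\ \leq\ \tfrac1q\,|\alpha|^2_{\omega,h}\,{\rm vol}_{\omega}$$
pointwise, independently of $\varepsilon$. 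The same rescaling computation records the monotonicity $|u|^2_{\omega'}{\rm vol}_{\omega'}\leq|u|^2_{\omega''}{\rm vol}_{\omega''}$ valid for any $(n,s)$-form whenever $\omega'\geq\omega''$, a fact proved by diagonalizing the pair $(\omega',\omega'')$ alone and used repeatedly below. Integrating the displayed bound gives $\|\beta_\varepsilon\|^2_{\omega_\varepsilon,h}\leq\tfrac1q\|\alpha\|^2_{\omega,h}$ for all $\varepsilon$. Finally I let $\varepsilon\to0$: fixing $\varepsilon_0>0$, monotonicity applied to the $(n,q-1)$-forms $\beta_\varepsilon$ gives $\|\beta_\varepsilon\|^2_{\omega_{\varepsilon_0},h}\leq\|\beta_\varepsilon\|^2_{\omega_\varepsilon,h}\leq\tfrac1q\|\alpha\|^2_{\omega,h}$ for $\varepsilon<\varepsilon_0$, so the family is bounded in the fixed Hilbert space $L^{n,q-1}_{(2)}(Y,E;\omega_{\varepsilon_0},h)$; a weak-compactness argument with a diagonal extraction over $\varepsilon_0=1/m$ yields $\beta$ with $\dbar\beta=\alpha$ (the graph of $\dbar$ is weakly closed) and $\|\beta\|^2_{\omega_{\varepsilon_0},h}\leq\tfrac1q\|\alpha\|^2_{\omega,h}$ by weak lower semicontinuity; and since $|\beta|^2_{\omega_{\varepsilon_0},h}{\rm vol}_{\omega_{\varepsilon_0}}$ increases to $|\beta|^2_{\omega,h}{\rm vol}_{\omega}$ as $\varepsilon_0\downarrow0$, monotone convergence delivers $\|\beta\|^2_{\omega,h}\leq\tfrac1q\|\alpha\|^2_{\omega,h}$.

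I expect the main obstacle to be exactly the uniform-in-$\varepsilon$ curvature estimate of the third paragraph, and in particular the reconciliation of the three forms $\sqrt{-1}\Theta_h(E)$, $\omega$ and $\omega_\varepsilon$ beyond the simultaneously-diagonal model: one must verify that the sharp constant $1/q$, natural for the curvature measured against $\omega$, survives intact when the functional analysis is forced onto the enlarged complete metric $\omega_\varepsilon$. This is precisely what the elementary eigenvalue inequality together with the $(n,q)$-form rescaling identity is designed to guarantee, and it is what allows the regularization limit to go through with no loss in the estimate.
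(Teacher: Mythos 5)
The paper offers no proof of this statement---it is imported verbatim as Theorem 5.1 of \cite{Demailly1982}---so there is no internal argument to compare against; your reconstruction is essentially Demailly's original proof (regularization by the complete metrics $\omega_\varepsilon=\omega+\varepsilon\hat\omega$, the pointwise monotonicity of $\langle B^{-1}u,u\rangle\,{\rm vol}$ on $(n,q)$-forms giving an $\varepsilon$-uniform bound with the sharp constant $1/q$, and a weak-compactness limit), and the architecture and the eigenvalue inequality are correct. The one step you leave at the level of a model computation is the pointwise estimate when $\sqrt{-1}\Theta_h(E)$ is not simultaneously diagonalizable with $\omega$ and $\hat\omega$ (one can always co-diagonalize the two K\"ahler forms, but not the ${\rm End}(E)$-valued curvature); this is precisely the content of Demailly's monotonicity lemma for the operators $[\sqrt{-1}\Theta_h(E),\Lambda_{\omega'}]$ and does require the additional operator-monotonicity argument you anticipate, so your proposal is a faithful, honest sketch of the cited proof rather than a complete self-contained one.
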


The main theorem of this section is the following
\begin{thm}\label{thm_main_local1}
	Let $X$ be a complex space of dimension $n$ and $ds^2$ a hermitian metric on a dense Zariski open subset $X^o\subset X_{\rm reg}$ with $\omega$ its fundamental form. Let $(E,h)$ be a Nakano semi-positive hermitian vector bundle. Assume that, locally at every point $x\in X$, there is a neighborhood $x\in U$, a strictly psh function $\lambda\in C^2(U)$ and a bounded psh function $\Phi\in C^2(U\cap X^o)$ such that $$\sqrt{-1}\ddbar\lambda|_{U\cap X^o}\lesssim\omega|_{U\cap X^o}\lesssim\sqrt{-1}\ddbar\Phi$$
	Then the canonical map
	\begin{align}\label{align_fine_resolution}
	S_X(E,h)\to \sD^{n,\bullet}_{X,ds^2}(E,h)
	\end{align}
	is a quasi-isomorphism. As a consequence, there is a canonical isomorphism
	$$H^q(X,S_X(E,h))\simeq H^{n,q}_{(2)}(X^o,E;ds^2,h),\quad\forall q\geq0$$
	if $X$ is compact.
\end{thm}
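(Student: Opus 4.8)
The plan is to show that the augmented complex
$$0\to S_X(E,h)\to \sD^{n,0}_{X,ds^2}(E,h)\stackrel{\dbar}{\to}\sD^{n,1}_{X,ds^2}(E,h)\stackrel{\dbar}{\to}\cdots\stackrel{\dbar}{\to}\sD^{n,n}_{X,ds^2}(E,h)$$
is an exact complex of \emph{fine} sheaves, and then to read off the cohomology statement. Fineness of each $\sD^{n,q}_{X,ds^2}(E,h)$ is exactly Lemma \ref{lem_fine_sheaf}: the fundamental form $\sqrt{-1}\ddbar\lambda$ of the strictly psh $\lambda\in C^2(U)$ is a genuine hermitian metric $ds^2_0$ on the ambient germ that satisfies $ds^2_0|_{U\cap X^o}\lesssim ds^2|_{U\cap X^o}$, which is precisely the hypothesis of that lemma. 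Exactness at $\sD^{n,0}$ holds by the very definition of $S_X(E,h)$ as $\ker(\dbar)$. Thus the whole content lies in exactness in the positive degrees, which is a local statement on $X$.

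To prove the local $\dbar$-Poincar\'e lemma, I fix $x\in X$ and a germ at $x$ of a $\dbar$-closed section $\alpha$ of $\sD^{n,q}_{X,ds^2}(E,h)$ with $q\geq1$; after shrinking I may represent it on $U'\cap X^o$ with $\int_{U'\cap X^o}|\alpha|^2_{ds^2,h}{\rm vol}_{ds^2}<\infty$. The goal is to solve $\dbar\beta=\alpha$ on a smaller neighborhood with $\beta$ locally square integrable. I would run the H\"ormander--Demailly estimate of Theorem \ref{thm_Hormander_incomplete} on the complex manifold $U'\cap X^o$, which admits a complete K\"ahler metric by Lemma \ref{lem_complete_metric_exists_locally}. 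To supply the required curvature positivity I replace $h$ by $h_\Phi:=he^{-\Phi}$. Since $\Phi$ is bounded, $\|\cdot\|_{ds^2,h_\Phi}\sim\|\cdot\|_{ds^2,h}$, so the $L^2$-class is unchanged; and by Nakano semipositivity together with $\omega\lesssim\sqrt{-1}\ddbar\Phi$,
$$\sqrt{-1}\Theta_{h_\Phi}(E)=\sqrt{-1}\Theta_{h}(E)+\sqrt{-1}\ddbar\Phi\otimes{\rm Id}_E\geq \sqrt{-1}\ddbar\Phi\otimes{\rm Id}_E\gtrsim \omega\otimes{\rm Id}_E.$$
Hence the curvature dominates a fixed multiple of $\omega$, and Theorem \ref{thm_Hormander_incomplete} yields $\beta$ with $\dbar\beta=\alpha$ and $\|\beta\|_{ds^2,h_\Phi}\lesssim\|\alpha\|_{ds^2,h_\Phi}<\infty$; boundedness of $\Phi$ returns $\beta\in L^{n,q-1}_{X,ds^2}(E,h)$ near $x$, and since $\dbar\beta=\alpha$ is square integrable, $\beta\in\sD^{n,q-1}_{X,ds^2}(E,h)$. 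This establishes exactness in positive degrees.

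The step I expect to be the main obstacle is precisely the solvability of $\dbar\beta=\alpha$ in $L^2$ with respect to the \emph{given} metric $ds^2$, which near $X\setminus X^o$ is both incomplete and only hermitian. Two features must be reconciled with the hypotheses of Theorem \ref{thm_Hormander_incomplete}. First, the bundle twist must buy Nakano positivity relative to $\omega$ \emph{without} enlarging the $L^2$-norms: this is exactly why $\Phi$ is required to be \emph{bounded} (for $(n,q)$-forms the $L^2$-density is monotone in the metric, so one cannot afford to measure $\alpha$ and $\beta$ in a genuinely different reference form), whereas the strictly psh $\lambda$ and Lemma \ref{lem_complete_metric_exists_locally} supply the completeness that makes the a priori estimate usable. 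Second, the reference form in the estimate is a constant multiple of $\omega$; one checks that, because every form occurring is of bidegree $(n,\bullet)$, the underlying Bochner--Kodaira--Nakano inequality is insensitive to the failure of $ds^2$ to be K\"ahler, so the estimate may be run against $\omega$ itself.

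Granting exactness, $\sD^{n,\bullet}_{X,ds^2}(E,h)$ is a fine resolution of $S_X(E,h)$, so the canonical map (\ref{align_fine_resolution}) is a quasi-isomorphism and $H^q(X,S_X(E,h))\simeq H^q\big(\Gamma(X,\sD^{n,\bullet}_{X,ds^2}(E,h))\big)$. Finally, when $X$ is compact, a form on $X^o$ that is locally square integrable near every point of $X$ is globally square integrable, so $\Gamma(X,\sD^{n,q}_{X,ds^2}(E,h))=D^{n,q}_{\rm max}(X^o,E;ds^2,h)$ and the $\dbar$ in the definition of $\sD$ is $\dbar_{\rm max}$; hence the right-hand side computes exactly $H^{n,q}_{(2)}(X^o,E;ds^2,h)$, giving the asserted isomorphism for all $q\geq0$.
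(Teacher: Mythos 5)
Your proposal is correct and follows essentially the same route as the paper's own proof: fineness of the sheaves $\sD^{n,q}_{X,ds^2}(E,h)$ via Lemma \ref{lem_fine_sheaf} (using the strictly psh $\lambda$), and local exactness in positive degrees by twisting $h$ by $e^{-C\Phi}$ with $\Phi$ bounded so that the curvature dominates $\omega$, then invoking Lemma \ref{lem_complete_metric_exists_locally} for a complete K\"ahler metric and Theorem \ref{thm_Hormander_incomplete} to solve $\dbar\beta=\alpha$ in $L^2$. The only cosmetic difference is that you phrase the conclusion as a $\dbar$-Poincar\'e lemma for a single germ while the paper states it as the vanishing $H^{n,q}_{(2)}(U\cap X^o,E;ds^2,h)=0$ for $q>0$; these are the same statement.
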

\begin{proof}
	It suffices to show that (\ref{align_fine_resolution}) is exact at $\sD^{n,q}_{X,ds^2}(E,h)$ for $q>0$. Since the problem is local, we consider a point $x\in X$ and an open neighborhood $x\in U$ which is small enough so that there is $C>0$ and a bounded psh function $\Phi\in C^2(U\cap X^o)$ such that $ C\sqrt{-1}\partial\dbar\Phi\geq\omega|_{U\cap X^o}$. Let $h'=e^{-C\Phi}h$. Since $\Phi$ is bounded and $(E,h)$ is Nakano semi-positive, we have $h'\sim h$ and
$$\sqrt{-1}\Theta_{h'}(E|_{U\cap X^o})=C\sqrt{-1}\partial\dbar\Phi\otimes{\rm Id}_E|_{U\cap X^o}+\sqrt{-1}\Theta_{h}(E|_{U\cap X^o})\geq \omega\otimes{\rm Id}_E|_{U\cap X^o}.$$
	By Lemma \ref{lem_complete_metric_exists_locally}, we may assume that $U\cap X^o$ admits a complete K\"ahler metric. By Theorem \ref{thm_Hormander_incomplete}, we therefore have
	$$H^{n,q}_{(2)}(U\cap X^o, E|_{U\cap X^o};ds^2,h)=H^{n,q}_{(2)}(U\cap X^o, E|_{U\cap X^o};ds^2,h')=0,\quad \forall q>0.$$
	This proves the exactness of (\ref{align_fine_resolution}) at $\sD^{n,q}_{X,ds^2}(E,h)$, $q>0$. 
	Since $\omega$ is locally bounded from below by a hermitian metric, $\sD^{n,q}_{X,ds^2}(E,h)$ is a fine sheaf for every $q$ by Lemma \ref{lem_fine_sheaf}. The second claim therefore follows from the compactness of the space $X$. 
\end{proof}
In order to apply Theorem \ref{thm_main_local1}, we introduce a type of hermitian metric that is crucial for the present paper.
\begin{defn}\label{align_various_metric}
	Let $X$ be a complex space and $X^o\subset X_{\rm reg}$ a dense Zariski open subset. Let $ds^2$ be a hermitian metric on $X^o$. 
	\begin{enumerate}
		\item $ds^2$ admits a $(\infty,1)$ bounded potential locally  on $X$ if, for every point $x\in X$, there is a neighborhood $U$ of $x$, a function $\Phi\in C^{\infty}(U)$ such that $|\Phi|+|d\Phi|_{ds^2}<\infty$ and $ds^2|_U\sim\sqrt{-1}\ddbar\Phi$.
		\item $ds^2$ is called locally complete on $X$ if, for every point $x\in X$, there is a neighborhood $U$ of $x$ such that $(\overline{U}\cap X^o,ds^2)$ is complete.
		\item $ds^2$ is locally bounded from below by a hermitian metric if, for every point $x\in X$, there is a neighborhood $U$ of $x$ and a hermitian metric $ds^2_0$ on $U$ such that $ds^2_0|_U\lesssim ds^2|_U$.
	\end{enumerate}
\end{defn}
\begin{lem}\label{lem_complete_metric_exists_locally}
	Let $X$ be a weakly pseudoconvex K\"ahler space with $\psi$ a smooth exhausted psh function on $X$. Denote $X_c:=\{x\in X|\psi(x)<c\}$. Let $X^o\subset X$ be a dense Zariski open subset. Then, for every $c\in\bR$, there exists a complete K\"ahler metric $ds^2$ on $X_c\cap X^o$ such that 
	\begin{enumerate}
		\item $ds^2_0\lesssim ds^2$ for some hermitian metric $ds^2_0$ on $X_c$;
		\item $ds^2$ admits a $(\infty,1)$ bounded potential locally on $X_c$.
	\end{enumerate} 
\end{lem}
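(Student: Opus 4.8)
The plan is to correct a background K\"ahler form by plurisubharmonic barrier potentials that blow up along the two pieces of the boundary of $X_c\cap X^o$ inside $X$ — the analytic set $Z:=X\setminus X^o$ and the level set $\{\psi=c\}$ — choosing the barrier along $Z$ to be \emph{bounded}, so that the completeness is compatible with the $(\infty,1)$ bounded potential of Definition \ref{align_various_metric}.

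As $\psi$ is a smooth psh exhaustion, $\overline{X_c}$ is compact; fix a K\"ahler hermitian metric $\omega_0$ on a neighborhood of $\overline{X_c}$ (this will be the $ds^2_0$ of property (1)). Cover a neighborhood of $Z\cap\overline{X_c}$ by finitely many charts $U_\alpha$ on which $Z=\{g^{(\alpha)}=0\}$ for holomorphic tuples $g^{(\alpha)}$, and put $u_\alpha:=-\log|g^{(\alpha)}|^2$, normalized so $u_\alpha\ge2$; then $u_\alpha$ is plurisuperharmonic and $u_\alpha\to+\infty$ along $Z$. For the pseudoconvex side set $w:=-\log(c-\psi)$, which is psh and $\to+\infty$ along $\{\psi=c\}$. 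The heart of the construction is a single convex, decreasing, bounded profile $\eta$ — for instance $\eta(s)=1/\log s$ on $s\ge2$. With $\Psi_\alpha:=\eta(u_\alpha)$, the signs $\sqrt{-1}\partial u_\alpha\wedge\bar\partial u_\alpha\ge0$, $\sqrt{-1}\ddbar u_\alpha\le0$, $\eta''\ge0$, $\eta'\le0$ give that $\Psi_\alpha$ is psh, bounded, and $\sqrt{-1}\ddbar\Psi_\alpha\ge\eta''(u_\alpha)\,\sqrt{-1}\partial u_\alpha\wedge\bar\partial u_\alpha$. I then set
$$ds^2:=C\,\omega_0+\sqrt{-1}\ddbar\Big(\sum_\alpha\chi_\alpha\Psi_\alpha\Big)+\sqrt{-1}\ddbar\,w,$$
with $\{\chi_\alpha\}$ a partition of unity near $Z\cap\overline{X_c}$ and $C\gg0$. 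This is K\"ahler (only $\sqrt{-1}\ddbar$ of functions is added to the closed form $C\omega_0$), and positivity together with property (1), $\omega_0\lesssim ds^2$, follow on the compact $\overline{X_c}$ by taking $C$ large: the second-order barrier terms are $\ge0$, while the zeroth-order errors $\Psi_\alpha\sqrt{-1}\ddbar\chi_\alpha$ are bounded — here boundedness of $\eta$ is used — and absorbed by $C\omega_0$.

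Completeness I would extract from the fact that $ds^2$ dominates the barrier Hessians. From $ds^2\ge\eta''(u_\alpha)\sqrt{-1}\partial u_\alpha\wedge\bar\partial u_\alpha$ one gets $|du_\alpha|_{ds^2}\lesssim u_\alpha\log u_\alpha$, so any path along which $u_\alpha\to\infty$ (that is, approaching $Z$) has infinite length because $\int^{\infty}\frac{du}{u\log u}=\infty$; likewise $ds^2\ge(c-\psi)^{-2}\sqrt{-1}\partial\psi\wedge\bar\partial\psi$ gives $|d\psi|_{ds^2}\lesssim c-\psi$ and $\int^{c}\frac{d\psi}{c-\psi}=\infty$, so reaching $\{\psi=c\}$ costs infinite length. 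Since the boundary of $X_c\cap X^o$ in $X$ lies in $Z\cup\{\psi=c\}$, the metric is complete. Property (2) holds with the local potential $C\phi_0+\sum_\alpha\chi_\alpha\Psi_\alpha+w$ ($\phi_0$ a local potential of $\omega_0$): at every point of $X_c$ — including points of $Z$, by boundedness of $\eta$, and interior points near $\{\psi=c\}$, where $w$ is smooth and bounded — this function is bounded with $ds^2$-bounded differential, the latter from the one-variable estimate $|d\Psi_\alpha|^2_{ds^2}\lesssim|\eta'(u_\alpha)|^2/\eta''(u_\alpha)=1/(\log u_\alpha)^2\to0$.

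The step I expect to be the main obstacle is precisely reconciling completeness with boundedness of the potential near $Z$. The naive Poincar\'e-type weight $-\log u_\alpha$ produces a complete metric but an \emph{unbounded} potential; in fact a spherical-average argument shows that no bounded $\Phi$ can satisfy even the one-sided domination $\omega\lesssim\sqrt{-1}\ddbar\Phi$ for the Poincar\'e cusp metric, which is what forces the iterated-logarithm profile $\eta=1/\log s$, for which $\int\sqrt{\eta''}$ still diverges while $\eta$ stays bounded. A secondary technical point is the gluing: the cross terms $\partial\chi_\alpha\wedge\bar\partial\Psi_\alpha$ must not destroy positivity. I would control them by noting that distinct local generators of the ideal of $Z$ have bounded ratios, so on overlaps $\Psi_\alpha-\Psi_\beta$ and its relevant derivatives are of lower order than the positive second-order barrier terms near $Z$ and bounded away from $Z$; hence the cross terms are dominated by $\sqrt{-1}\ddbar(\chi_\alpha\Psi_\alpha)$ near $Z$ and by $C\omega_0$ elsewhere.
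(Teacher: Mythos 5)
Your proposal is correct and follows essentially the same route as the paper: the same bounded potential $1/\log(-\log|g|^2)$ along $X\setminus X^o$ (glued by a partition of unity, with completeness via the $\log\log$ exhaustion and Hopf--Rinow), plus a pseudoconvexity barrier at $\{\psi=c\}$ and a large multiple of a background K\"ahler metric on the compact $\overline{X_c}$. The only cosmetic difference is your choice of $-\log(c-\psi)$ where the paper uses $\sqrt{-1}\ddbar\psi_c^2$ with $\psi_c=\psi+\frac{1}{c-\psi}$; both are standard and interchangeable.
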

\begin{proof}	
	Let $U$ be a neighborhood of a point $x\in X\backslash X^o$. Assume that $U\backslash X^o\subset U$ is defined by $f_1,\dots,f_r\in\sO_U(U)$. Let 
	$$\varphi_U:=\frac{1}{\log(-\log\sum_{i=1}^r\|f_i\|^2)}+\phi_U,$$
	where $\phi_U$ is a strictly $C^\infty$ psh function on $U$ so that $\varphi_U$ is strictly psh.
	Then the quasi-isometric class of $\sqrt{-1}\ddbar\varphi_U$ is independent of the choice of $\{f_1,\dots,f_r\}$ and $\phi_U$. By partition of unity the potential functions $\varphi_U$ can be glued to a global $\varphi$ on $X$ so that 
	\begin{align}\label{align_Grauert_local}
	\sqrt{-1}\ddbar\varphi|_U\sim\sqrt{-1}\ddbar\varphi_U
	\end{align}
	near every point $x\in X\backslash X^o$ and $\varphi\equiv0$ away from a neighborhood $V$ of $X\backslash X^o$.
	
	Denoting $u=-\log\sum_{i=1}^r\|f_i\|^2$,  we assume that $u>e$ on $V$ by a possible shrinking of $V$. Then
	\begin{align}
	\sqrt{-1}\ddbar\varphi|_U&\sim\sqrt{-1}\frac{2+\log u}{u^2\log^3u}\partial u\wedge\dbar u+\sqrt{-1}\frac{\ddbar u}{u\log^2u}+\sqrt{-1}\ddbar\phi_U\\\nonumber
	&\sim\sqrt{-1}\frac{\partial u\wedge\dbar u}{u^2\log^2u}+\sqrt{-1}\frac{\ddbar u}{u\log^2u}+\sqrt{-1}\ddbar\phi_U.
	\end{align}
	Hence 
	$$|\varphi|+|d\varphi|_{\sqrt{-1}\ddbar\varphi}\lesssim \frac{1}{\log u}<1.$$
	Since $\log\log u$ is a smooth exhausted function such that 
	$$|d\log\log u|_{\sqrt{-1}\ddbar\varphi}\leq 2.$$
	By the Hopf-Rinow theorem, $\sqrt{-1}\ddbar\varphi$ is locally complete near $X\backslash X^o$.
	
	Let $c\in\bR$ and let $\omega_0$ be a K\"ahler hermitian metric on $X$. By adding a constant to $\psi$, we assume $\psi\geq 0$. Then $\psi_c:=\psi+\frac{1}{c-\psi}$ is a smooth exhausted psh function on $X_c=\{x\in X|\psi(x)<c\}$. Hence
	$\omega_0+\sqrt{-1}\ddbar\psi^2_c$
	is a complete K\"ahler hermitian metric on $X_c$ (\cite[Theorem 1.3]{Demailly1982}).
	
	Since $\overline{X_c}$ is compact, $$\sqrt{-1}\ddbar\varphi+K(\omega_0+\sqrt{-1}\ddbar\psi^2_c),\quad K\gg0$$
	is positive definite and it gives a  desired complete K\"ahler metric on $X_c\cap X^o$ which we need.
\end{proof}
\section{Harmonic Representation of the derived pushforwards}
Let $f:X\rightarrow Y$ be a proper locally K\"ahler morphism between complex spaces and $X^o\subset X_{\rm reg}$ a dense Zariski open subset. Let $(E,h)$ be a hermitian vector bundle on $X^o$ with a Nakano semi-positive curvature. The aim of this section is to introduce a harmonic representation of $R^qf_\ast S_X(E,h)$. The results of this section are generalizations of Section 4 of  \cite{Takegoshi1995} to $L^2$-Dolbeault complexes on complex spaces.
\subsection{Harmonic forms on weakly 1-complete spaces}
Let $Z$ be a complex space of pure dimension $n$ and $Z^o\subset Z_{\rm reg}$ a dense Zariski open subset. Let $ds^2$ be K\"ahler metric on $Z^o$ which is locally complete and admits a $(\infty,1)$ bounded potential locally on $X$ (Definition \ref{align_various_metric}). Denote $\omega$ to be the associated K\"ahler form.  Let $(E,h)$ be a hermitian vector bundle on $Z^o$. Let $A^{p,q}(Z^o,E)$ (resp. $A^{k}(Z^o,E)$) be the space of $C^\infty$ $E$-valued $(p,q)$ (resp. $k$) forms on $Z^o$ and let $A^{p,q}_{\rm cpt}(Z^o,E)\subset A^{p,q}(Z^o,E)$ be the subspace of forms with compact support in $Z^o$.
Let $\ast:A^{p,q}(Z^o,E)\to A^{n-q,n-p}(Z^o,E)$ be the Hodge star operator relative to $ds^2$ and let $\sharp_E:A^{p,q}(Z^o,E)\to A^{q,p}(Z^o,E^\ast)$ be the anti-isomorphism defined by $h$. Denote by $\langle-,-\rangle_h$ the pointwise inner product on $A^{p,q}(Z^o,E)$. These operators are related by
\begin{align}
\langle\alpha,\beta\rangle_h{\rm vol}_{ds^2}=\alpha\wedge\ast\sharp_E\beta.
\end{align}
Denote 
\begin{align}
(\alpha,\beta)_h:=\int_{Z^o}\langle\alpha,\beta\rangle_h{\rm vol}_{ds^2}
\end{align}
and $\|\alpha\|_h:=\sqrt{(\alpha,\alpha)_h}$.
Let $\nabla=D'+\dbar$ be the Chern connection associated to $h$. Let
$\dbar^\ast_h=-\ast D'\ast$ and $D'^\ast_h=-\ast\dbar\ast$ be the formal adjoints of $\dbar$ and $D'$ respectively.

For two operators $S$ and $T$ acting on $A^r(Z^0,E)$ with degree $a$ and $b$ respectively, we define the graded Lie bracket $[S,T]:=S\circ T-(-1)^{ab}T\circ S$. 

Denote by $L$ the Lefschetz operator with respect to $ds^2$ and by $\Lambda$ the formal adjoint of $L$.
Then we have the following K\"ahler identities (\cite{Wells1980}, Chapter V):
	\begin{align}\label{align_kahler1}
D_h^{'\ast}=-\sqrt{-1}[\dbar,\Lambda],
	\end{align}
		\begin{align}\label{align_kahler2}
	\dbar_h^{\ast}=\sqrt{-1}[D',\Lambda] \quad and
	\end{align}
		\begin{align}\label{align_theta}
	e(\theta)^{\ast}=\sqrt{-1}[e(\bar{\theta}),\Lambda]
	\end{align}
	for $\theta\in A^{1,0}(Z^o,E)$.

 Denote $\Delta_{\dbar}=\dbar\dbar^\ast_h+\dbar^\ast_h\dbar$ and $\Delta_{D'}=D'D'^\ast_h+D'^\ast_hD'$. 
Since $e(\Theta_h)=[\dbar,D']$, by (\ref{align_kahler1}), (\ref{align_kahler2}) and Jacobi's identity 
$$[[S,T],U]+(-1)^{a(b+c)}[[T,U],S]+(-1)^{c(a+b)}[[U,S],T]\equiv0$$
where $a=\deg S$, $b=\deg T$ and $c=\deg U$ respectively, we obtain that the formula
\begin{align}\label{align_Bochner_formula}
\Delta_{\dbar}=\Delta_{D'}+\sqrt{-1}[e(\Theta_h),\Lambda]
\end{align}
holds on $Z^o$ where $e(\alpha)(\beta):=\alpha\wedge\beta$.

Let $\varphi:Z^o\to\bR$ be a $C^\infty$ function and let $h_{\varphi}:=e^{-\varphi}h$. 
Since $D'_{h_{\varphi}}=D'-e(\partial \varphi)$, we obtain the formulae:
\begin{align}\label{align_metric replace}
\dbar_{h_{\varphi}}^{\ast}=\dbar_h^{\ast}+e(\dbar \varphi)^{\ast}
\end{align}
and
\begin{align}
\Delta_{\dbar_{h_{\varphi}}}=\Delta_{D'_{h_{\varphi}}}+\sqrt{-1}[e(\Theta(h)+\partial\dbar \varphi),\Lambda]
\end{align}
where $\Delta_{\dbar_{h_{\varphi}}}:=[\dbar,\dbar_{h_{\varphi}}^{\ast}]$ and $\Delta_{D'_{h_{\varphi}}}:=[D'_{h_{\varphi}}, D'^\ast_{h_{\varphi}}]$.

By (\ref{align_kahler1}), (\ref{align_kahler2}), (\ref{align_theta}) and Jacobi's identity, the Donnelly and Xavier's formula (\cite[(1.9), (1.10)]{Takegoshi1995}) can be stated as follows.
\begin{align}\label{align_DX_formula1}
[\dbar,e(\dbar\varphi)^\ast]+[D'^\ast_h,e(\partial\varphi)]=\sqrt{-1}[e(\ddbar\varphi),\Lambda]
\end{align}
and
\begin{align}\label{align_DX_formula2}
[D',e(\partial\varphi)^\ast]+[\dbar^\ast_h,e(\dbar\varphi)]=\sqrt{-1}[e(\ddbar\varphi),\Lambda)].
\end{align}

Fix a Whitney stratification of $Z$ so that $Z^o$ is the union of open strata. By Sard's theorem there is a subset $\Sigma\subset\bR$ of measure zero so that for every $c\in\bR\backslash\Sigma$ and every stratum $S$ of $Z$, $c$ is a regular value of $\varphi|_S$. Any $c\in\bR\backslash\Sigma$ is called a regular value of $\varphi$. For any regular value $c$ of $\varphi$, $\{\varphi=c\}$ is a piecewise smooth submanifold of $Z$. In particular, $\{\varphi\leq c\}\cap Z^o$ is a submanifold of $Z^o$ with a smooth boundary $\{\varphi=c\}\cap Z^o$. 

Denote $$(\alpha,\beta)_{c,h}=\int_{\{\varphi\leq c\}\cap Z^o}\langle\alpha,\beta\rangle_h{\rm vol}_{ds^2},\quad [\alpha,\beta]_{c,h}:=\int_{\{\varphi=c\}\cap Z^o}\langle\alpha,\beta\rangle_h{\rm vol}_{\{\varphi=c\}}$$
and $\|\alpha\|_{c,h}:=\sqrt{(\alpha,\alpha)_{c,h}}$ for any regular value $c$ of $\varphi$.
By Stokes's theorem we acquire that (\cite[(1.1)]{Takegoshi1995})
\begin{align}\label{align_stokes1}
(\dbar\alpha,\beta)_{c,h}=(\alpha,\dbar^\ast_h\beta)_{c,h}+[\alpha,e(\dbar\varphi)^\ast\beta]_{c,h},
\end{align}
and
\begin{align}\label{align_stokes2}
(D'\alpha,\beta)_{c,h}=(\alpha,D'^\ast_h\beta)_{c,h}+[\alpha,e(\partial\varphi)^\ast\beta]_{c,h}
\end{align}
hold for every $C^\infty$ forms $\alpha$ and $\beta$ on $\{\varphi\leq c\}\cap Z^o$ such that either $\alpha$ or $\beta$ has compact support in $\{\varphi\leq c\}\cap Z^o$.
\begin{prop}\label{prop_keytech}
	Let $(M,\omega_M)$ be a complete K\"ahler manifold of dimension $n$ and let $(E,h)$ be a Nakano semi-positive holomorphic vector bundle on $M$. If $q\geq 1$ and $\alpha\in\sD^{n,q}(M,E;\omega_M,h)\cap {\rm Ker}\Delta_{\dbar}$, then $\alpha$ satisfies the following equations:
	\begin{enumerate}
		\item $\dbar\alpha=0$, $\dbar^\ast_h\alpha=0$, $D'^\ast_h\alpha=0$ and $\langle e(\Theta_h)\Lambda \alpha,\alpha\rangle_h=0$ on $M$. In particular $\dbar(\ast\alpha)=0$, i.e. $\ast\alpha\in \Gamma(M,\Omega^{n-q}_M(E))$.
		\item $e(\dbar\varphi)^\ast\alpha=0$ and $\langle e(\ddbar\varphi)\Lambda \alpha,\alpha\rangle_h=0$ on $M$ for any $C^\infty$ psh function $\varphi$ on $M$ with
		$${\rm sup}_{x\in M}\{|\varphi(x)|+|d\varphi(x)|_{\omega_M}\}<\infty.$$
	\end{enumerate}
\end{prop}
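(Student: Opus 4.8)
My plan is to run the Bochner--Kodaira--Nakano machinery on the complete manifold $M$, extracting pointwise vanishing from integrated identities by Nakano semi-positivity, and to dispose of the boundary terms using completeness of $\omega_M$. Since $\Delta_{\dbar}$ is elliptic and $\Delta_{\dbar}\alpha=0$ with $\alpha\in L^2$, elliptic regularity lets me take $\alpha$ smooth. The first thing I would record is the completeness integration-by-parts principle: for $L^2$ forms $\phi,\psi$ with $\dbar\phi,\dbar^\ast_h\psi\in L^2$ one has $(\dbar\phi,\psi)_h=(\phi,\dbar^\ast_h\psi)_h$, proved by inserting cut-offs $\chi_\nu\uparrow1$ with $\sup_M|d\chi_\nu|_{\omega_M}\to0$ (equivalently by letting $c\to\infty$ through regular values in the Stokes identities (\ref{align_stokes1}), (\ref{align_stokes2}) so that the boundary integrals over $\{\varphi=c\}$ drop out). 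Applying this to $\alpha$ gives $(\Delta_{\dbar}\alpha,\alpha)_h=\|\dbar\alpha\|_h^2+\|\dbar^\ast_h\alpha\|_h^2$, and since $\alpha\in\ker\Delta_{\dbar}$ the left side is $0$; hence $\dbar\alpha=0$ and $\dbar^\ast_h\alpha=0$.

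For the remaining assertions of (1) I would pair the Bochner--Kodaira--Nakano identity (\ref{align_Bochner_formula}) with $\alpha$ and integrate. As $\alpha$ has bidegree $(n,q)$, both $D'\alpha$ and $e(\Theta_h)\alpha$ vanish for type reasons, so integration by parts turns the right-hand side into $\|D'^\ast_h\alpha\|_h^2+\int_M\langle\sqrt{-1}[e(\Theta_h),\Lambda]\alpha,\alpha\rangle_h{\rm vol}_{\omega_M}$, while the left-hand side equals the already-vanishing $(\Delta_{\dbar}\alpha,\alpha)_h$. Here Nakano semi-positivity of $(E,h)$ and $q\geq1$ make $\sqrt{-1}[e(\Theta_h),\Lambda]$ a non-negative Hermitian operator on $(n,q)$-forms, so the two non-negative summands must each vanish: this gives $D'^\ast_h\alpha=0$ and, from the pointwise non-negativity of the integrand, $\langle e(\Theta_h)\Lambda\alpha,\alpha\rangle_h\equiv0$. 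Finally $D'^\ast_h=-\ast\dbar\ast$, so $D'^\ast_h\alpha=0$ is equivalent to $\dbar(\ast\alpha)=0$, i.e. $\ast\alpha$ is a holomorphic section of $\Omega^{n-q}_M(E)$.

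For (2) I would introduce the weighted metric $h_\varphi=e^{-\varphi}h$, which is quasi-isometric to $h$ since $|\varphi|$ is bounded, so every $L^2$ condition above is preserved. Applying the Donnelly--Xavier identity (\ref{align_DX_formula1}) to $\alpha$ and feeding in the vanishings from (1), the commutator terms collapse: $[\dbar,e(\dbar\varphi)^\ast]\alpha=\dbar\bigl(e(\dbar\varphi)^\ast\alpha\bigr)$ because $\dbar\alpha=0$, and $[D'^\ast_h,e(\partial\varphi)]\alpha=0$ because $D'^\ast_h\alpha=0$ and $e(\partial\varphi)\alpha=0$ for type reasons. This leaves the pointwise identity $\dbar\bigl(e(\dbar\varphi)^\ast\alpha\bigr)=\sqrt{-1}\,e(\ddbar\varphi)\Lambda\alpha$. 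Pairing it with $\alpha$, integrating, and moving $\dbar$ onto $\dbar^\ast_h\alpha=0$ by the completeness principle yields $\int_M\langle\sqrt{-1}[e(\ddbar\varphi),\Lambda]\alpha,\alpha\rangle_h{\rm vol}_{\omega_M}=0$; since $\varphi$ is psh and $q\geq1$ the operator is again non-negative, so $\langle e(\ddbar\varphi)\Lambda\alpha,\alpha\rangle_h\equiv0$, and non-negativity upgrades this to $e(\ddbar\varphi)\Lambda\alpha\equiv0$ as a form. That is the second equation of (2).

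To obtain $e(\dbar\varphi)^\ast\alpha=0$, I would feed $e(\ddbar\varphi)\Lambda\alpha\equiv0$ back into the pointwise identity to get $\dbar\bigl(e(\dbar\varphi)^\ast\alpha\bigr)=0$. Writing $\beta:=e(\dbar\varphi)^\ast\alpha$ and using $\dbar^\ast_{h_\varphi}=\dbar^\ast_h+e(\dbar\varphi)^\ast$ together with $\dbar^\ast_h\alpha=0$, I identify $\beta=\dbar^\ast_{h_\varphi}\alpha$; then $\|\beta\|_{h_\varphi}^2=(\dbar^\ast_{h_\varphi}\alpha,\beta)_{h_\varphi}=(\alpha,\dbar\beta)_{h_\varphi}=0$ by completeness integration-by-parts and $\dbar\beta=0$, so $e(\dbar\varphi)^\ast\alpha=0$. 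The main obstacle throughout is the non-compactness of $M$: each pairing above conceals a boundary term, and the entire argument rests on showing these are negligible. This is exactly where completeness of $\omega_M$ and the uniform bounds $\sup_M(|\varphi|+|d\varphi|_{\omega_M})<\infty$ are used — completeness supplies the cut-offs with $|d\chi_\nu|\to0$ (or makes the $\{\varphi=c\}$-integrals in (\ref{align_stokes1}), (\ref{align_stokes2}) vanish along $c\to\infty$), while the gradient bound keeps $e(\dbar\varphi)^\ast\alpha$ and $e(\partial\varphi)^\ast\alpha$ in $L^2$ so that the cut-off errors, which pair $d\chi_\nu$ against these forms, tend to $0$. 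Making this limiting process rigorous for every displayed integration by parts is the technical heart of the proof.
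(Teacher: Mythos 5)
Your argument is correct and is essentially the paper's own route: the paper disposes of this proposition by citing Takegoshi's Theorem 3.4, and what you have written is a faithful reconstruction of that proof --- Gaffney-type integration by parts on the complete manifold to get $\dbar\alpha=\dbar^\ast_h\alpha=0$, the Bochner--Kodaira--Nakano identity (\ref{align_Bochner_formula}) plus Nakano semi-positivity to split off $D'^\ast_h\alpha=0$ and the pointwise curvature vanishing, and the Donnelly--Xavier formula (\ref{align_DX_formula1}) with the weight $e^{-\varphi}h$ for part (2). This is the same cut-off/boundary-term machinery the paper itself runs in the proof of Proposition \ref{prop_sH}, and your closing remarks correctly identify where completeness and the bound on $|\varphi|+|d\varphi|_{\omega_M}$ enter.
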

\begin{proof}
	See \cite[Theorem 3.4]{Takegoshi1995}.
\end{proof}
Denote by $\sP_e(Z)$ the set of $C^\infty$ psh functions $\varphi:Z\to(-\infty,c_\ast)$ for some $c_\ast\in(-\infty,\infty]$ such that 
$X_c:=\{z\in Z|\varphi(z)<c\}$ is precompact in $Z$ for every $c<c_\ast$.
$Z$ is called a \textbf{weakly 1-complete space} if $\sP_e(Z)\neq\emptyset$. For every $\varphi\in \sP_e(Z)$, denote
\begin{align}
\sH^{p,q}(Z,E,h,\varphi):=\left\{\alpha\in\sD^{p,q}_{Z,ds^2}(E,h)(Z)\big|\dbar\alpha=\dbar^\ast_h\alpha=0, e(\dbar\varphi)^\ast\alpha=0\right\}.
\end{align}
By the regularity theorem for elliptic operators of second order, every element of $\sH^{p,q}(Z,E,h,\varphi)$ is $C^\infty$ on $Z^o$.
\begin{prop}\label{prop_sH}
	Let $Z$ be a weakly 1-complete space of pure dimension $n$ and $Z^o\subset Z_{\rm reg}$ a dense Zariski open subset. Let $ds^2$ be a K\"ahler metric on $Z^o$ which is locally complete on $Z$ and let $(E,h)$ be a hermitian vector bundle on $Z^o$ with a Nakano semi-positive curvature. Then the following assertions hold.
	\begin{enumerate}
		\item Let $\varphi\in\sP_e(Z)$. Assume that $\alpha\in \sD^{n,q}_{Z,ds^2}(E,h)(Z)$ satisfies $e(\dbar\varphi)^\ast\alpha=0$. Then $\dbar\alpha=\dbar^\ast_h\alpha=0$ if and only if $D_h'^\ast\alpha=\langle \sqrt{-1}e(\Theta_h+\ddbar\varphi)\Lambda\alpha,\alpha\rangle_h=0$. Here $\langle \sqrt{-1}e(\Theta_h+\ddbar\varphi)\Lambda\alpha,\alpha\rangle_h=0$ is equivalent to $\langle \sqrt{-1}(\ddbar\varphi)\Lambda\alpha,\alpha\rangle_h=\langle \sqrt{-1}(\Theta_h)\Lambda\alpha,\alpha\rangle_h=0$.
		\item Let $\varphi,\psi\in \sP_e(Z)$. Then  $ \sH^{n,q}(Z,E,h,\varphi)=\sH^{n,q}(Z,E,h,\psi)$ for every $q\geq 0$.
		\item For every $q\geq0$ and every $\varphi\in \sP_e(Z)$, the Hodge star operator gives a well defined map
		\begin{align}
		\ast:\sH^{n,q}(Z,E,h,\varphi)\to{\rm Ker}\dbar\left(\sD^{n-q,0}_{Z,ds^2}(E,h)(Z)\to \sD^{n-q,1}_{Z,ds^2}(E,h)(Z)\right).
		\end{align}
	\end{enumerate}
\end{prop}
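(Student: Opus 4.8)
The plan is to treat all three parts as the weakly $1$-complete analogue of the complete-manifold statement Proposition \ref{prop_keytech}, reducing everything to the integrated Bochner--Kodaira--Nakano identity on the sublevel sets $X_c:=\{z\in Z\mid\varphi(z)<c\}$, in the spirit of \cite{Takegoshi1995} but now on the locally complete, $(\infty,1)$-bounded-potential metric furnished by Lemma \ref{lem_complete_metric_exists_locally}. The key structural simplification is that for a form $\alpha$ of top holomorphic degree $(n,q)$ one has $D'\alpha=0$ by type, so $\Delta_{D'}\alpha=D'D_h'^\ast\alpha$; passing to the weighted metric $h_\varphi=e^{-\varphi}h$ converts the curvature term into $\sqrt{-1}[e(\Theta_h+\ddbar\varphi),\Lambda]$, which is pointwise $\geq0$ by Nakano semipositivity of $(E,h)$ together with $\sqrt{-1}\ddbar\varphi\geq0$.

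For (1) I would integrate the Bochner identity over $X_c$ and use the Stokes formulas (\ref{align_stokes1}), (\ref{align_stokes2}) to move $\dbar$ and $D'$ across the inner product, arriving at
$$\|\dbar\alpha\|^2_{c,h}+\|\dbar^\ast_h\alpha\|^2_{c,h}=\|D_h'^\ast\alpha\|^2_{c,h}+\int_{X_c}\langle\sqrt{-1}[e(\Theta_h+\ddbar\varphi),\Lambda]\alpha,\alpha\rangle_h\,{\rm vol}_{ds^2}+B(c),$$
where $B(c)$ collects boundary integrals over $\{\varphi=c\}$; the Donnelly--Xavier formulas (\ref{align_DX_formula1}), (\ref{align_DX_formula2}) are what let me rewrite the mixed boundary contributions, and the hypothesis $e(\dbar\varphi)^\ast\alpha=0$ annihilates the leading boundary term. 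Since $\alpha$ is globally $L^2$ and $\varphi$ is an exhaustion, a coarea/Fubini argument produces a sequence $c_k\to c_\ast$ along which $B(c_k)\to0$; passing to the limit (using $X_{c_k}\nearrow Z^o$) every remaining term is nonnegative, so the left side vanishes if and only if $D_h'^\ast\alpha=0$ and $\langle\sqrt{-1}[e(\Theta_h+\ddbar\varphi),\Lambda]\alpha,\alpha\rangle_h\equiv0$. As the summands $\sqrt{-1}[e(\Theta_h),\Lambda]$ and $\sqrt{-1}[e(\ddbar\varphi),\Lambda]$ are separately $\geq0$, their sum vanishes iff each does, which is the asserted splitting. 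The hard part is exactly this boundary control: justifying the integration by parts for the non-compactly supported $\alpha$ and extracting the good radii $c_k$, which is where local completeness of $ds^2$ and the bounded potential enter.

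For (2) I would first invoke (1) to record that $\alpha\in\sH^{n,q}(Z,E,h,\varphi)$ forces the $\varphi$-independent equations $\dbar\alpha=\dbar^\ast_h\alpha=D_h'^\ast\alpha=0$ and $\langle\sqrt{-1}e(\Theta_h)\Lambda\alpha,\alpha\rangle_h=0$; by symmetry it then suffices to show $e(\dbar\psi)^\ast\alpha=0$ for an arbitrary $\psi\in\sP_e(Z)$. Feeding $\dbar\alpha=0$ and $D_h'^\ast\alpha=0$ into (\ref{align_DX_formula1}) yields the pointwise identity $\dbar\big(e(\dbar\psi)^\ast\alpha\big)=\sqrt{-1}e(\ddbar\psi)\Lambda\alpha$; pairing against $\alpha$ over $X_c$, using $\dbar^\ast_h\alpha=0$ and the limiting argument from (1), gives $\int_Z\langle\sqrt{-1}e(\ddbar\psi)\Lambda\alpha,\alpha\rangle_h\,{\rm vol}_{ds^2}=0$, and positivity of the operator then forces $e(\ddbar\psi)\Lambda\alpha\equiv0$, hence $\dbar\eta=0$ for $\eta:=e(\dbar\psi)^\ast\alpha$. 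To upgrade $\dbar$-closedness of $\eta$ to $\eta=0$ I would compute $\|\eta\|^2_{c,h}=(\alpha,\dbar(\psi\eta))_{c,h}$ using $\dbar\eta=0$, and integrate by parts once more via (\ref{align_stokes1}); since $\dbar^\ast_h\alpha=0$ this expresses $\|\eta\|^2_{c,h}$ purely as a boundary integral over $\{\psi=c\}$, which the same sequence $c_k\to c_\ast$ sends to $0$. Here again the delicate point is the boundary estimate, so (1) and (2) share their one genuine obstacle.

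For (3), which is then immediate, I would note that (1) gives $D_h'^\ast\alpha=0$ for $\alpha\in\sH^{n,q}(Z,E,h,\varphi)$, and since $D_h'^\ast=-\ast\dbar\ast$ this is equivalent to $\dbar(\ast\alpha)=0$. As $\ast$ is a fibrewise isometry for $ds^2$ it preserves local square-integrability, so $\ast\alpha$ lies in $\sD^{n-q,0}_{Z,ds^2}(E,h)(Z)$ and is $\dbar$-closed there; this is precisely the asserted map into ${\rm Ker}\,\dbar\big(\sD^{n-q,0}_{Z,ds^2}(E,h)(Z)\to\sD^{n-q,1}_{Z,ds^2}(E,h)(Z)\big)$.
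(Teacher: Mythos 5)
Your overall strategy --- integrating the Bochner--Kodaira identity over the sublevel sets $\{\varphi\le c\}$, moving derivatives across with the Stokes formulas (\ref{align_stokes1})--(\ref{align_stokes2}), invoking the Donnelly--Xavier identities, and concluding from Nakano semipositivity that each nonnegative term vanishes --- is the same as the paper's, and your part (3) coincides with its argument. But there is a genuine gap at exactly the step you flag as ``the hard part'': the disposal of the boundary integrals over $\{\varphi=c\}$. You propose to extract radii $c_k\to c_\ast$ along which $B(c_k)\to 0$ by a coarea/Fubini argument. This does not work in general: the definition of $\sP_e(Z)$ allows $c_\ast<\infty$, in which case finiteness of $\|\alpha\|^2$ forces no decay of slice integrals as $c\to c_\ast$; moreover the surviving boundary density is $\langle\sqrt{-1}e(\ddbar\varphi)\Lambda\alpha,\alpha\rangle_h$, which involves $\ddbar\varphi$ and first-order derivatives of $\alpha$ and is not dominated by $|\alpha|^2/|d\varphi|$, the only quantity the coarea formula controls. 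The paper's mechanism is different and is the crux of the proof: after applying (\ref{align_DX_formula1}) together with $e(\dbar\varphi)^\ast\alpha=0$ and $\dbar\alpha=0$, the only surviving boundary term is precisely $[\sqrt{-1}e(\ddbar\varphi)\Lambda\alpha,\alpha]_{c,h_\varphi}$, which is \emph{nonnegative} because $\varphi$ is psh; it therefore vanishes for \emph{every} regular value $c$, being one of three nonnegative terms summing to zero (see (\ref{align_H_2})). No limiting over $c$ is needed --- only the Hopf--Rinow cutoffs $\theta_\nu$ in the interior, which you correctly attribute to local completeness.

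The same issue infects your last step of (2): you reduce $\|\eta\|^2$ to a boundary integral over $\{\psi=c\}$ and again appeal to decaying radii. If instead you integrate over sublevel sets of $\varphi$, the Stokes boundary contribution is $[\psi\eta,e(\dbar\varphi)^\ast\alpha]_{c,h}$, which vanishes outright because $e(\dbar\varphi)^\ast\alpha=0$, and the interior term $(\psi\eta,\dbar^\ast_h\alpha)_{c,h}$ vanishes as well, closing the argument without any decay claim. This is in effect what the paper does by working with the metric $h_{-\psi}=e^{\psi}h$ and obtaining $\|e(\dbar\psi)^\ast\alpha\|^2_{c,h_{-\psi}}+(\sqrt{-1}e(\ddbar\psi)\Lambda\alpha,\alpha)_{c,h_{-\psi}}=0$ with both terms nonnegative. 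So the architecture of your proof is right, but the one analytic point you defer is precisely the one that must be resolved by a sign observation rather than by a decay argument, and as written the proposal does not supply it.
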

\begin{proof}
	To show (1), we suppose that $e(\dbar\varphi)^\ast\alpha=0$ and $\dbar\alpha=\dbar^\ast_h\alpha=0$. Take any regular value $c$ of $\varphi$. Since $(\{\varphi\leq c\}\cap Z^o,ds^2)$ is complete, the Hopf-Rinow theorem implies that there exists an exhaustive sequence $(K_{\nu})$ of compact sets of $Z^o\cap \{\varphi\leq c\}$ and functions $\theta_{\nu}\in C^{\infty}( \{\varphi\leq c\}\cap Z^o,\bR)$ such that
	$$\theta_{\nu}=1\quad \textrm{on a neighbourhood of $K_{\nu}$,} \quad \textrm{Supp $\theta_{\nu}\subset K^{\circ}_{\nu+1}$},$$
	$$0\leq \theta_{\nu}\leq\theta_{\nu+1}\leq 1\quad \textrm{and}\quad  |d\theta_{\nu}|_g\leq 2^{-\nu},\quad \forall\nu.$$
Then $\theta_{\nu}\alpha$  converges to $\alpha$ under the norm $\|-\|+\|\dbar-\|+\|D'-\|+\|\dbar^\ast_h-\|+\|D'^\ast_h-\|$.
	
	Let $h_\varphi=e^{-\varphi}h$. Since $\dbar\alpha=\dbar^\ast_h\alpha=0$, by (\ref{align_Bochner_formula}) we obtain that 
	\begin{align}
	(D'D'^\ast_{h_\varphi}\alpha,\theta_{\nu}\alpha)_{c,h_\varphi}+(\sqrt{-1}e(\Theta_{h_\varphi})\Lambda \alpha,\theta_{\nu}\alpha)_{c,h_\varphi}=0.
	\end{align}
	By (\ref{align_stokes2}), this is equivalent to
	\begin{align}\label{align_H_1}
	(D'^\ast_{h_\varphi}\alpha,D'^\ast_{h_\varphi}(\theta_{\nu}\alpha))_{c,h_\varphi}+(\sqrt{-1}e(\Theta_{h_\varphi})\Lambda \alpha,\theta_{\nu}\alpha)_{c,h_\varphi}+[e(\partial\varphi)D'^\ast_{h_\varphi}\alpha,\theta_{\nu}\alpha]_{c,h_\varphi}=0.
	\end{align}
	By the assumptions and (\ref{align_DX_formula1}) we have 
	\begin{align}\label{align_H_0}
	e(\partial\varphi)D'^\ast_{h_\varphi}\alpha=\sqrt{-1}e(\ddbar\varphi)\Lambda \alpha.
	\end{align}
	Substituting (\ref{align_H_0}) into (\ref{align_H_1}),  we get that 
	\begin{align}
	(D'^\ast_{h_\varphi}\alpha,D'^\ast_{h_\varphi}(\theta_{\nu}\alpha))_{c,h_\varphi}+(\sqrt{-1}e(\Theta_{h_\varphi})\Lambda \alpha,\theta_{\nu}\alpha)_{c,h_\varphi}+[\sqrt{-1}e(\ddbar\varphi)\Lambda \alpha,\theta_{\nu}\alpha]_{c,h_\varphi}=0.
	\end{align}
	Letting $\nu\to\infty$, we obtain that 
	\begin{align}\label{align_H_2}
	\|D'^\ast_{h_\varphi}\alpha\|^2_{c,h_\varphi}+(\sqrt{-1}e(\Theta_{h_\varphi})\Lambda \alpha,\alpha)_{c,h_\varphi}+[\sqrt{-1}e(\ddbar\varphi)\Lambda \alpha,\alpha]_{c,h_\varphi}=0.
	\end{align}
	
Since $\varphi$ is a psh function and $\sqrt{-1}\Theta_h$ is Nakano semi-positive, $\sqrt{-1}\Theta_{h_\varphi}=\sqrt{-1}\Theta_{h}+\sqrt{-1}\ddbar\varphi$ is also Nakano semi-positive. Then all the three terms in (\ref{align_H_2}) are semi-positive. Hence they are all zero for every regular value $c$ of $\varphi$. This proves the necessity of (1).
	
	To prove the sufficiency of (1), we assume that $D_h'^\ast\alpha=\sqrt{-1}\langle e(\Theta_h+\ddbar\varphi)\Lambda\alpha,\alpha\rangle_h=0$ and $e(\dbar\varphi)^\ast\alpha=0$. By (\ref{align_Bochner_formula}) we have $\Delta_{\dbar}\alpha=0$. By (\ref{align_stokes1}) we obtain
	\begin{align}\label{align_H_3}
	&(\Delta_{\dbar}\alpha,\theta_\nu\alpha)_{c,h}\\\nonumber
	=&(\dbar^\ast_h\alpha,\dbar^\ast_h(\theta_\nu\alpha))_{c,h}+(\dbar\alpha,\dbar(\theta_\nu\alpha))_{c,h}+[\dbar^\ast_h\alpha,e(\dbar\varphi)^\ast(\theta_\nu\alpha)]_{c,h}-[e(\dbar\varphi)^\ast\dbar\alpha,\theta_\nu\alpha]_{c,h}\\\nonumber
	=&0.
	\end{align}
	By (\ref{align_DX_formula1}) we acquire that 
	\begin{align}\label{align_H_4}
	\langle \sqrt{-1}e(\dbar\varphi)\dbar\alpha,\theta_\nu\alpha\rangle_h=\langle \sqrt{-1}e(\ddbar\varphi)\Lambda\alpha,\theta_\nu\alpha\rangle_h=\theta_\nu\sqrt{-1}\langle e(\ddbar\varphi)\Lambda\alpha,\alpha\rangle_h=0.
	\end{align}
	Combining (\ref{align_H_3}), (\ref{align_H_4}) and  $e(\dbar\varphi)^\ast(\theta_\nu\alpha)=\theta_\nu e(\dbar\varphi)^\ast\alpha=0$, we obtain that 
	\begin{align}
	(\dbar^\ast_h\alpha,\dbar^\ast_h(\theta_\nu\alpha))_{c,h}+(\dbar\alpha,\dbar(\theta_\nu\alpha))_{c,h}=0.
	\end{align}
	Taking its limit we know that 
	\begin{align}
	\|\dbar^\ast_h\alpha\|^2_{c,h}+\|\dbar\alpha\|^2_{c,h}=0
	\end{align}
	for every regular value $c$ of $\varphi$. This implies that  $\dbar\alpha=\dbar^\ast_h\alpha=0$.
	
	To prove (2) we set $h_{-\psi}=e^\psi h$. By (\ref{align_DX_formula1}) we obtain that 
\begin{align}\label{thetaalpha}
\dbar e(\dbar\psi)^\ast(\theta_\nu\alpha)+e(\dbar \psi)^\ast \dbar(\theta_{\nu}\alpha) =\sqrt{-1}e(\ddbar\psi)\Lambda(\theta_\nu\alpha)
\end{align}
	if $\alpha\in \sH^{n,q}(Z,E,h,\varphi)$. By (\ref{align_metric replace}) and (\ref{align_stokes1}), we get that 
	\begin{align}
	(\dbar e(\dbar\psi)^\ast(\theta_\nu\alpha),\alpha)_{c,h_{-\psi}}&=(e(\dbar\psi)^\ast(\theta_\nu\alpha),\dbar^\ast_{h_{-\psi}}\alpha)_{c,h_{-\psi}}+[e(\dbar\psi)^\ast(\theta_\nu\alpha),e(\dbar\varphi)^\ast\alpha]_{c,h_{-\psi}}\\\nonumber
	&=(e(\dbar\psi)^\ast(\theta_\nu\alpha),(\dbar^\ast_{h}-e(\dbar\psi)^\ast)\alpha)_{c,h_{-\psi}}\\\nonumber
	&=-(e(\dbar\psi)^\ast(\theta_\nu\alpha),e(\dbar\psi)^\ast\alpha)_{c,h_{-\psi}}.
	\end{align}
		Taking $\nu\to\infty$ on (\ref{thetaalpha}), we know that 
	\begin{align}
	(\sqrt{-1}e(\ddbar\psi)\Lambda(\alpha),\alpha)_{c,h_{-\psi}}+\|e(\dbar\psi)^\ast\alpha\|^2_{c,h_{-\psi}}=0
	\end{align}
	for every regular value $c$ of $\varphi$.
    Since both terms are semi-positive, we show that $e(\dbar\psi)^\ast\alpha=0$. Hence $\alpha\in \sH^{n,q}(Z,E,h,\psi)$.
    
	It remains to show (3). Since $\ast$ is a bounded operator, it suffices to show that $\dbar\ast\alpha=0$ for every $\alpha\in\sH^{n,q}(Z,E,h,\varphi)$. This follows from $-\ast\dbar\ast\alpha=D'^\ast_h\alpha=0$ which is proved in (1).
\end{proof}
By Proposition \ref{prop_sH}-(2), $\sH^{n,q}(Z,E,h,\varphi)$ is independent of the choice of $\varphi\in\sP_e(Z)$. Hence we simply denote $\sH^{n,q}(Z,E,h):=\sH^{n,q}(Z,E,h,\varphi)$ for every $Z$ with $\sP_e(Z)\neq\emptyset$. 

\subsection{Harmonic Representation}
Let us return to the relative setting. Let $f:X\rightarrow Y$ be a proper morphism from a complex space $X$ to an irreducible complex space $Y$. Denote $n:={\rm dim} X$ and $m:={\rm dim} Y$ respectively. Assume that every irreducible component of $X$ is mapped onto $Y$. Let $X^o\subset X_{\rm reg}$ be a dense Zariski open subset and $(E,h)$ a hermitian vector bundle on $X^o$ with a Nakano semi-positive curvature. Assume that there is a K\"ahler metric $ds^2$ on $X^o$ such that
\begin{enumerate}
	\item $ds^2$ admits a $(\infty,1)$ bounded potential locally on $X$;
	\item $ds^2$ is locally complete on $X$ and is locally bounded from below by a hermitian metric.
\end{enumerate}
By Lemma \ref{lem_complete_metric_exists_locally}, such kind of metric exists locally near every fiber of $f$ and globally on $X^o$ when $X$ is a compact K\"ahler space.
By Theorem \ref{thm_main_local1} and Lemma \ref{lem_fine_sheaf}, there is a resolution by fine sheaves
\begin{align}\label{align_resolution_S}
S_X(E,h)\to \sD^{n,\bullet}_{X,ds^2}(E,h).
\end{align}
Denote $X(T):=f^{-1}T$ for every subset $T\subset Y$. Denote by $L$ the Lefschetz operator with respect to $ds^2$ and denote by $\Lambda$ the formal adjoint of $L$.
\begin{prop}\label{prop_restriction_welldefined}
	Notations as above. Let $S\subset Y$ be a Stein open subset. Let $S^o\subset S_{\rm reg}$ and $X(S)^o\subset X(S)_{\rm reg}\cap X(S^o)$ be dense Zariski open subsets so that $f:X(S)^o\to S^o$ is a submersion. 
	Then the Hodge star operator
	\begin{align}\label{align_Hodgestar_welldefined}
	\ast:\sH^{n,q}(X(S),E,h)\to \left\{\alpha\in\sD^{n-q,0}_{X(S),ds^2}(E,h)(X(S))\bigg|\dbar\alpha=0,\alpha|_{X(S)^o}\in\Gamma(X(S)^o,\Omega_{X^o}^{n-m-q}\otimes f^\ast\Omega^m_{S^o})\right\}
	\end{align}
	is well defined and injective for every $q$. As a consequence,
	\begin{enumerate}
		\item $\sH^{n,q}(X(S),E,h)=0$ for every $q>n-m$;
		\item For every open subset $S'\subset S$ such that $\sP_e(S')\neq\emptyset$, the restriction map
		\begin{align}
		\sH^{n,q}(X(S),E,h)\to \sH^{n,q}(X(S'),E,h)
		\end{align}
		is well defined.
	\end{enumerate}
\end{prop}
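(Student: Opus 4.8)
The plan is to reduce everything to a single structural statement: for $\alpha\in\sH^{n,q}(X(S),E,h)$ the holomorphic form $\ast\alpha$ is divisible by the pullback of the top base forms, i.e. $\ast\alpha|_{X(S)^o}\in\Gamma(X(S)^o,\Omega^{n-m-q}_{X^o}\otimes f^\ast\Omega^m_{S^o})$. That $\ast\alpha$ lies in $\sD^{n-q,0}_{X(S),ds^2}(E,h)(X(S))$ and is $\dbar$-closed is already furnished by Proposition \ref{prop_sH}(3), and injectivity of the map is immediate since $\ast$ is, up to sign, an involution on forms of fixed bidegree. Once the divisibility is established the two consequences drop out: if $q>n-m$ then $n-m-q<0$, so the target sheaf $\Omega^{n-m-q}_{X^o}$ vanishes, forcing $\ast\alpha=0$ and hence $\alpha=0$; and the restriction statement follows by checking the defining conditions of $\sH^{n,q}(X(S'),E,h)$ on the smaller space.

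To prove divisibility I would first transport the boundary condition $e(\dbar\varphi)^\ast\alpha=0$ to a statement about $\ast\alpha$. Using the Hermitian Hodge identity expressing the adjoint of exterior multiplication by a scalar $(0,1)$-form through the Hodge star, one has, up to a nonzero constant, $e(\dbar\varphi)^\ast\alpha=\pm\ast^{-1}\bigl(\partial\varphi\wedge\ast\alpha\bigr)$; since $\ast$ is injective, $e(\dbar\varphi)^\ast\alpha=0$ is equivalent to $\partial\varphi\wedge\ast\alpha=0$ on $X(S)^o$. By Proposition \ref{prop_sH}(2) membership in $\sH^{n,q}(X(S),E,h)$ is independent of the chosen $\varphi\in\sP_e(X(S))$, so this vanishing holds for every such $\varphi$. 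I would then feed in potentials pulled back from the base: since $Y$, hence $S$, is Stein it carries a strictly psh exhaustion $\rho$, and for a smooth point $s\in S^o$ and a global holomorphic $g$ on $S$ the function $\rho+\epsilon|g|^2$ is again a strictly psh exhaustion, whose pullback $f^\ast(\rho+\epsilon|g|^2)$ lies in $\sP_e(X(S))$ because $f$ is proper. As $\partial$ is linear in the potential, subtracting the contributions of $\rho$ and of $\rho+\epsilon|g|^2$ yields $f^\ast(dg)\wedge\ast\alpha=0$ at points over $\{g\neq0\}$.

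Because $S$ is Stein and smooth along $S^o$, global holomorphic functions $g_1,\dots,g_m$ can be chosen whose differentials $dg_1(s),\dots,dg_m(s)$ span $\Omega^1_{S^o,s}$; running the previous step for each $g_k$ gives $f^\ast\eta\wedge\ast\alpha=0$ for every $\eta\in\Omega^1_{S^o}$, that is $f^\ast(dw_1)\wedge\ast\alpha=\cdots=f^\ast(dw_m)\wedge\ast\alpha=0$ in local base coordinates $w_1,\dots,w_m$. A form annihilated by exterior multiplication with each of $m$ independent $1$-forms is divisible by their product, so $\ast\alpha=f^\ast(dw_1\wedge\cdots\wedge dw_m)\wedge\xi$ with holomorphic $\xi\in\Omega^{n-m-q}_{X^o}$, which is exactly the asserted membership. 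The submersion hypothesis on $f:X(S)^o\to S^o$ is what makes $f^\ast\Omega^1_{S^o}\subset\Omega^1_{X^o}$ a rank-$m$ subbundle and gives meaning to these horizontal forms. For the restriction map in (2), given $\alpha\in\sH^{n,q}(X(S),E,h)$ and $\psi'\in\sP_e(S')$, the form $\alpha|_{X(S')}$ still satisfies $\dbar\alpha=\dbar^\ast_h\alpha=0$, while $e(\dbar(f^\ast\psi'))^\ast\alpha=0$ because, by the equivalence above, it amounts to $f^\ast(\partial\psi')\wedge\ast\alpha=0$, which is automatic once $\ast\alpha$ is divisible by $f^\ast(dw_1\wedge\cdots\wedge dw_m)$ (a base form of degree $m$ wedged with a further base $1$-form vanishes), a property that persists on $X(S')^o$ by density and continuity. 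Since $f^\ast\psi'\in\sP_e(X(S'))$ by properness, Proposition \ref{prop_sH} then identifies $\alpha|_{X(S')}$ as an element of $\sH^{n,q}(X(S'),E,h)$.

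The step I expect to be the genuine obstacle is the passage from the abstract adjoint condition to the wedge condition on $\ast\alpha$, together with the production of enough base potentials. Precisely, I would need to pin down the Hermitian, $E$-valued version of $e(\dbar\varphi)^\ast=\pm\ast^{-1}e(\partial\varphi)\ast$ with the correct sign and verify it pointwise on $X^o$ (the $E$-factor is a spectator since $\dbar\varphi$ is scalar), and I would need to arrange the psh exhaustions on $S$ so that their $\partial$'s span the base cotangent space at smooth points while remaining genuine exhaustions after pullback by the proper $f$; the Stein property of $Y$ is what supplies both the strictly psh exhaustion and the separating global functions. The remaining assertions are then formal consequences of the divisibility of $\ast\alpha$.
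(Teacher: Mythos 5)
Your proposal is correct and follows the same overall architecture as the paper's proof: everything reduces to showing that $\ast\alpha$ is killed by wedging with $f^\ast\eta$ for enough holomorphic $1$-forms $\eta$ on the base, with the Stein property of $S$ supplying those forms, and the two consequences then fall out exactly as you describe. The tactical route to that wedge-vanishing differs, though. The paper fixes a single closed embedding $S\subset\bC^N$, takes the one potential $\varphi=\sum\|z_i\|^2$, and extracts $e(\overline{\partial f^\ast z_i})^\ast\alpha=0$ from the vanishing of the curvature-type term $\langle\sqrt{-1}e(\ddbar f^\ast\varphi)\Lambda\alpha,\alpha\rangle_h=0$ furnished by Proposition \ref{prop_sH}(1), via the K\"ahler identity $e(\theta)^\ast=\sqrt{-1}[e(\bar\theta),\Lambda]$; you instead exploit Proposition \ref{prop_sH}(2) to vary the potential over the family $\rho+\epsilon|g|^2$ and differentiate the boundary condition $e(\dbar\varphi)^\ast\alpha=0$ in $\varphi$, converting it to $\partial\varphi\wedge\ast\alpha=0$ by the $\ast$-conjugation form of the adjoint. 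Both work; the paper's version gets all $N$ vanishings from one identity at once, while yours avoids the $\Lambda$-calculus entirely. For consequence (2) your route is actually cleaner: the paper writes $\alpha=c(n,q)^{-1}L^q(\ast\alpha)$ and pushes $e(\dbar f^\ast\psi)^\ast$ through $L^q$ with the commutator $\sqrt{-1}[e(\dbar f^\ast\psi)^\ast,L]=e(\partial f^\ast\psi)$, whereas you read off $e(\dbar f^\ast\psi)^\ast\alpha=\pm\ast^{-1}(f^\ast(\partial\psi)\wedge\ast\alpha)=0$ directly from divisibility. Two small points to nail down: (i) the identity $e(\dbar\varphi)^\ast=\pm\ast^{-1}e(\partial\varphi)\ast$ on $E$-valued $(n,q)$-forms should be checked against the paper's conventions ($\langle\alpha,\beta\rangle_h\mathrm{vol}=\alpha\wedge\ast\sharp_E\beta$), though the sign is irrelevant for the vanishing argument and the degree bookkeeping is consistent; (ii) your subtraction only yields $f^\ast(dg)\wedge\ast\alpha=0$ over $\{g\neq0\}$, so you should either replace $g$ by $g+c$ for suitable constants $c$ or invoke continuity and density of $\{g\neq 0\}$ to conclude everywhere. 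Also note your injectivity argument ($\ast\ast=\pm\mathrm{id}$) is simpler than, but equivalent to, the paper's $L^q\circ\ast=c(n,q)\mathrm{Id}$.
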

\begin{proof}
	Let $\alpha\in \sH^{n,q}(X(S),E,h)$. By Proposition \ref{prop_sH}-(1), $D'^\ast_h\alpha=0$, i.e. $\dbar\ast\alpha=0$. It remains to show that $\ast\alpha|_{X(S)^o}\in\Gamma(X(S)^o,\Omega_{X^o}^{n-m-q}\otimes f^\ast\Omega^m_{S^o})$. 
	
	Fix a closed immersion $S\subset\bC^N$ where $z_1,\dots,z_N$ is the coordinate of $\bC^N$. Let $\varphi=\sum_{i=1}^N\|z_i\|^2\in\sP_e(S)$.
	By Proposition \ref{prop_sH}-(1) and (\ref{align_theta}), one has
	\begin{align}
	\sum_{i=1}^N|e(\overline{\partial f^\ast(z_i)})^\ast\alpha|^2_h
	=&\sum_{i=1}^N\langle e(\overline{\partial f^\ast(z_i)})^{\ast}\alpha,\sqrt{-1}e(\partial f^{\ast}(z_i))\Lambda \alpha\rangle\quad(\ref{align_theta})\\\nonumber
	=&\sum_{i=1}^N\langle \sqrt{-1}e(\overline{\partial f^\ast(z_i)})e(\partial f^{\ast}(z_i))\Lambda \alpha,\alpha\rangle\\\nonumber
	=&\langle \sqrt{-1}e(\ddbar f^{\ast}\varphi)\Lambda\alpha,\alpha\rangle_h=0.
	\end{align}	
	Hence $f^\ast(dz_i)\wedge \ast\alpha=0$, $\forall i=1,\dots,N$. This implies that $\alpha|_{X(S)^o}=0$ if $q>n-m$ and $\ast\alpha|_{X(V)\cap X(S)^o}$ can be divided by $f^\ast\theta$ for any open subset $V\subset S^o$ which admits a non-vanishing holomorphic $m$-form $\theta$. Therefore $$\ast\alpha|_{X(S)^o}\in\Gamma(X(S)^o,\Omega_{X^o}^{n-m-q}\otimes f^\ast\Omega^m_{S^o}).$$ This proves that (\ref{align_Hodgestar_welldefined}) is well defined and $\sH^{n,q}(X(S),E,h)=0$ for every $q>n-m$.  (\ref{align_Hodgestar_welldefined}) is injective because  
	\begin{align}\label{align_Last=1}
	L^q\circ\ast=c(n,q){\rm Id},\quad c(n,q)=\sqrt{-1}^{(n-q)(n-q+3)}q!
	\end{align}
 holds	on $(n,q)$-forms (\cite[Theorem 3.16]{Wells1980}).
	
	It remains to show that $\alpha|_{X(S')}\in \sH^{n,q}(X(S'),E,h)$, i.e. $e(\dbar f^\ast\psi)^\ast\alpha|_{X(S')}=0$ for some $\psi\in\sP_e(S')$. By (\ref{align_Last=1}), $\alpha=L^q\beta$ for $\beta=c(n,q)^{-1}\ast\alpha$. Choose an arbitrary $\psi\in\sP_e(S')$. Since $\beta|_{X(S)^o}\in\Gamma(X(S)^o,\Omega_{X^o}^{n-m-q}\otimes f^\ast\Omega^m_{S^o})$, $e(\partial(f^\ast\psi))L^{ k}\beta|_{X(S')\cap X(S)^o}=0$ for every $k=0,\dots,q-1$. Because 
	\begin{align}
	\sqrt{-1}[e(\dbar(f^\ast\psi))^\ast,L]=e(\partial(f^\ast\psi)),\textrm{\quad(\cite[Chapter V, (3.22)]{Wells1980})},
	\end{align}
	we obtain that 
	\begin{align}
	e(\dbar(f^\ast\psi))^\ast L^q\beta|_{X(S')\cap X(S)^o}=L^qe(\dbar(f^\ast\psi))^\ast\beta|_{X(S')\cap X(S)^o}=0.
	\end{align}
	Thus $e(\dbar(f^\ast\psi))^\ast\alpha|_{X(S')}=0$ by continuity. This proves the proposition.
\end{proof}
By Proposition \ref{prop_restriction_welldefined}, the restriction map 
$$\sH^{n,q}(X(V),E,h)\to \sH^{n,q}(X(U),E,h)$$
is well defined for any pair of Stein open subsets $U\subset V\subset Y$. Hence the data
\begin{align*}
U\mapsto\sH^{n,q}(X(U),E,h),\quad U\subset Y \textrm{ is a Stein open subset}
\end{align*}
determines a sheaf $\sH^{n,q}_f(E,h)$ on $Y$ (after a sheafification). 

By (\ref{align_resolution_S}) and Lemma \ref{lem_fine_sheaf}, there is a natural morphism
\begin{align}\label{align_sH_to_Rf}
\sH^{n,\bullet}_f(E,h)\to f_\ast(\sD_{X,ds^2}^{n,\bullet}(E,h))\simeq Rf_\ast(S_X(E,h)).
\end{align}
This induces a canonical morphism
$$\sH^{n,q}_f(E,h)\to R^qf_\ast S_X(E,h)$$
for every $0\leq q\leq n$.
The main result of this section is 
\begin{thm}\label{thm_thm_harmonic_rep}
	$\sH^{n,q}_f(E,h)$ is a sheaf of $\sO_Y$-modules for every $q\geq0$.
	Assume that $S_X(E,h)$ is a coherent sheaf, then the canonical morphism
	$$\sH^{n,q}_f(E,h)\to R^qf_\ast S_X(E,h)$$
	is an isomorphism of $\sO_Y$-modules for every $q\geq0$. Moreover, 
	$$\sH^{n,q}_f(E,h)(U)=\sH^{n,q}(f^{-1}(U),E,h)$$
	for every Stein open subset $U\subset Y$.
\end{thm}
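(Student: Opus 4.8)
The plan is to establish the three assertions in order: first the $\sO_Y$-module structure, then the reduction of the sheaf isomorphism to a local statement over a Stein base, and finally the Hodge-theoretic heart, namely that every $L^2$-Dolbeault class has a unique harmonic representative.

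First I would prove Part 1. The presheaf $U\mapsto\sH^{n,q}(X(U),E,h)$ on Stein opens already carries restriction maps by Proposition \ref{prop_restriction_welldefined}(2), so it suffices to check that multiplication by $f^\ast g$, for $g\in\sO_Y(U)$, preserves $\sH^{n,q}(X(U),E,h)$. The conditions $\dbar(f^\ast g\cdot\alpha)=f^\ast(\dbar g)\wedge\alpha+f^\ast g\,\dbar\alpha=0$ and $e(\dbar\varphi)^\ast(f^\ast g\cdot\alpha)=f^\ast g\,e(\dbar\varphi)^\ast\alpha=0$ are immediate, the latter because $e(\dbar\varphi)^\ast$ is a pointwise $C^\infty$-linear operator. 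For $\dbar^\ast_h$ I would use $\dbar^\ast_h=-\ast D'\ast$ together with the $C^\infty$-linearity of $\ast$ to obtain
\[
\dbar^\ast_h(f^\ast g\cdot\alpha)=f^\ast g\,\dbar^\ast_h\alpha-\ast\!\left(f^\ast(\partial g)\wedge\ast\alpha\right),
\]
and then invoke Proposition \ref{prop_restriction_welldefined}: since $\ast\alpha|_{X^o}\in\Gamma(X^o,\Omega^{n-m-q}_{X^o}\otimes f^\ast\Omega^m_{S^o})$ already carries the top base $m$-form as a factor, the base $(1,0)$-form $f^\ast(\partial g)$ gives $f^\ast(\partial g)\wedge\ast\alpha=0$, whence $\dbar^\ast_h(f^\ast g\cdot\alpha)=0$. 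As $f^\ast g$ is locally bounded, $f^\ast g\cdot\alpha$ stays in $\sD^{n,q}$. Thus multiplication is well defined, $\sO_Y$-linear, and compatible with restriction, so sheafification produces an $\sO_Y$-module.

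Next I would reduce the isomorphism to a local Hodge statement. The metric $ds^2$ satisfies the hypotheses of Theorem \ref{thm_main_local1} and of Lemma \ref{lem_fine_sheaf}, so $S_X(E,h)\to\sD^{n,\bullet}_{X,ds^2}(E,h)$ is a resolution by fine, hence soft and $f_\ast$-acyclic, sheaves. For the proper map $f$ this gives $Rf_\ast S_X(E,h)\simeq f_\ast\sD^{n,\bullet}_{X,ds^2}(E,h)$ and $R^qf_\ast S_X(E,h)=\mathcal H^q\!\left(f_\ast\sD^{n,\bullet}_{X,ds^2}(E,h)\right)$. For a relatively compact Stein $U$ (these form a basis), coherence of $S_X(E,h)$ and of $R^pf_\ast S_X(E,h)$ combined with Cartan's Theorem B give, via Leray, that $R^qf_\ast S_X(E,h)(U)=H^q(X(U),S_X(E,h))$, which equals the cohomology of the global $L^2$-Dolbeault complex $\sD^{n,\bullet}_{X,ds^2}(E,h)(X(U))$. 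Under this identification the canonical map \eqref{align_sH_to_Rf} sends a harmonic form to its $\dbar$-class, so it suffices to show that
\[
\sH^{n,q}(X(U),E,h)\longrightarrow H^q\!\left(\sD^{n,\bullet}_{X,ds^2}(E,h)(X(U))\right)
\]
is bijective for every such $U$.

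The bijectivity is the analytic core and the main obstacle. I would exploit that $X(U)$ is weakly $1$-complete: for a smooth psh exhaustion $\psi$ of the Stein space $U$ one has $f^\ast\psi\in\sP_e(X(U))$, with relatively compact sublevel sets $X_c(U)=\{f^\ast\psi<c\}$ on which forms in $\sD^{n,\bullet}$ have genuinely finite norm (local $L^2$-ness plus relative compactness in $X$), while local completeness of $ds^2$ near $X\setminus X^o$ supplies the cut-off functions $\theta_\nu$ used in Proposition \ref{prop_sH}. For \emph{injectivity}, if $\alpha\in\sH^{n,q}(X(U),E,h)$ is $\dbar$-exact, $\alpha=\dbar\beta$, then integrating by parts over $\{f^\ast\psi\le c\}$ via the Stokes formula \eqref{align_stokes1} and passing to the limit in $\nu$ gives $\|\alpha\|^2_{c,h}=(\beta,\dbar^\ast_h\alpha)_{c,h}+[\beta,e(\dbar f^\ast\psi)^\ast\alpha]_{c,h}=0$ because $\dbar^\ast_h\alpha=0$ and $e(\dbar f^\ast\psi)^\ast\alpha=0$; as this holds for almost every $c$, $\alpha=0$. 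For \emph{surjectivity} I would, following and globalizing Takegoshi \cite{Takegoshi1995}, solve on each $X_c(U)$ the boundary-value $\dbar$-problem correcting a given $\dbar$-closed $\eta\in\sD^{n,q}(X(U))$ to a form satisfying the harmonicity and boundary conditions defining $\sH^{n,q}$; the Bochner--Kodaira--Nakano identity \eqref{align_Bochner_formula}, the Donnelly--Xavier formulae \eqref{align_DX_formula1}--\eqref{align_DX_formula2}, and the Nakano semi-positivity of $(E,h)$ yield a priori estimates uniform in $c$, permitting passage to the limit $c\to c_\ast$ and producing a harmonic representative cohomologous to $\eta$. The difficulty here is precisely that $ds^2$ is only \emph{locally} complete and $X(U)$ is singular, so one must simultaneously control the boundary contributions at $\{f^\ast\psi=c\}$ and the degeneration of the metric toward $X\setminus X^o$, combining the exhaustion in the base direction with Lemma \ref{lem_complete_metric_exists_locally} and the cut-off technique. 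Granting this bijectivity for all Stein $U$, compatibility with restriction shows the $\sO_Y$-linear map $\sH^{n,q}_f(E,h)\to R^qf_\ast S_X(E,h)$ is an isomorphism; since the target is an honest sheaf and the comparison is an isomorphism on the Stein basis, the presheaf $U\mapsto\sH^{n,q}(X(U),E,h)$ satisfies descent there, so sheafification leaves its sections unchanged, giving $\sH^{n,q}_f(E,h)(U)=\sH^{n,q}(X(U),E,h)$ for every Stein $U$ and establishing Part 3.
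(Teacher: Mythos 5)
Your treatment of the $\sO_Y$-module structure and of injectivity is correct and close to the paper's (for the module structure the paper instead verifies $D'^\ast_h(g'\alpha)=0$ and the vanishing of the curvature pairing and invokes the equivalence in Proposition \ref{prop_sH}-(1), whereas you compute $\dbar^\ast_h(g'\alpha)$ directly from the factorization $\ast\alpha\in\Omega^{n-m-q}_{X^o}\otimes f^\ast\Omega^m_{S^o}$; both work). The genuine gap is in surjectivity, which you correctly identify as the analytic core but then only gesture at: ``a priori estimates uniform in $c$'' for a boundary-value $\dbar$-problem on the sublevel sets, ``permitting passage to the limit $c\to c_\ast$.'' No such uniform estimates are produced, and on a singular space with an only locally complete metric this is precisely the kind of step that does not come for free. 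The paper avoids it entirely by a different mechanism. First, on a slightly larger sublevel set $X(U_{c+c_1})$ it replaces the metric by
\begin{align*}
\omega_\lambda=\omega_{ds^2}+\sqrt{-1}\ddbar\,\lambda(\varphi-c),\qquad h_\lambda=e^{-\lambda(\varphi)}h,
\end{align*}
with $\lambda$ convex, vanishing below $c_0$ and with $\int_0^{c_1}\sqrt{\lambda''}=\infty$, so that $\omega_\lambda$ is \emph{complete} there while agreeing with $\omega_{ds^2}$ on $X(U_c)$; this makes the abstract Hilbert-space decomposition $\alpha=v_c+\gamma_c$ (harmonic part plus an element of $\overline{{\rm Im}\,\dbar}$) available, and Proposition \ref{prop_keytech} identifies $v_c|_{X(U_c)}$ as an element of $\sH^{n,q}(X(U_c),E,h)$.

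Second --- and this is the ingredient entirely absent from your proposal --- one must show that $\gamma_c\in\overline{{\rm Im}\,\dbar}$ is actually $\dbar$-exact. This is Proposition \ref{prop_rangeclosure_solvedbar}, whose proof is not an a priori estimate at all but a \v{C}ech-theoretic argument: it translates $\alpha$ into a \v{C}ech cocycle for $S_X(E,h)$ on a Stein cover, approximates it by coboundaries using the $L^2$ bounds of Lemma \ref{lem_estimate}, and then uses the coherence of $S_X(E,h)$ to conclude that the space of coboundaries is closed for uniform convergence on compact sets, so the limit is itself a coboundary. Finally, the local representatives $u_k$ on the exhaustion $X(U_k)$ are glued not by a limiting estimate but by the \emph{uniqueness} already established in your injectivity step ($u_{k+1}|_{X(U_k)}=u_k$), and the identification $[\alpha]=[u]$ over all of $X(U)$ again invokes coherence, via $H^q(X(U_k),S_X(E,h))\simeq\Gamma(U_k,R^qf_\ast S_X(E,h))$. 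Without the closed-range step (or an equivalent substitute) your argument does not close: producing a harmonic part in each Hilbert space is easy, but showing the complementary part is exact rather than merely a limit of exact forms is where coherence enters essentially, and your sketch never confronts it.
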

\begin{proof}
	Let $S\subset Y$ be a Stein open subset and $\varphi\in\sP_e(X(S))$.
	For every $\alpha\in\sH^{n,q}(X(S),E,h)$ and every $g\in\sO_Y(S)$, denote  $g':=f^\ast g$. Then Proposition \ref{prop_sH}-(1) implies that 
	$$D'^\ast_h(g'\alpha)=-\ast\dbar\ast(g'\alpha)=g' D'^\ast_h\alpha=0$$ and
	$$\langle \sqrt{-1}e(\Theta_h+\ddbar\varphi)\Lambda(g'\alpha),g'\alpha\rangle_h=\|g'\|^2\langle \sqrt{-1}e(\Theta_h+\ddbar\varphi)\Lambda(\alpha),\alpha\rangle_h=0.$$
	Hence $g'\alpha\in \sH^{n,q}(X(S),E,h,\varphi)$ by Proposition \ref{prop_sH}-(1).
	This shows that $\sH^{n,q}_f(E,h)$ is a sheaf of $\sO_Y$-modules for every $0\leq q\leq n$.
	
	Since $S_X(E,h)$ is a coherent sheaf, to prove the remaining claims, it suffices to show that the natural morphism
	\begin{align}
	\tau^q_U:\sH^{n,q}(X(U),E,h)\to H^q\Gamma(X(U),\sD^{n,\bullet}_{X,ds^2}(E,h))
	\end{align}
	is an isomorphism for every Stein open subset $U\subset Y$ and every $q\geq 0$.  Fix a $C^\infty$ exhausted strictly psh function $\varphi_U$ on $U$. Denote $U_c:=\{\varphi_U<c\}$ and $\varphi:=f^\ast\varphi_U$.
	
	{\bf Claim 1: $\tau^q_U$ is injective.} Assume that $\alpha\in \sH^{n,q}(X(U),E,h)$ and $\alpha=\dbar\beta$ for some $\beta\in \sD^{n,q-1}_{X,ds^2}(E,h)(X(U))$. By (\ref{align_stokes1}), we obtain that 
	$$(\alpha,\alpha)_{c,h}=(\beta,\dbar^\ast_h\alpha)_{c,h}=0$$
	for every regular value $c$ of $\varphi$. Hence $\alpha=0$.
	
	{\bf Claim 2: $\tau^q_U$ is surjective.} Let $\alpha\in \Gamma(X(U),\sD^{n,q}_{X,ds^2}(E,h))$ be $\dbar$-closed.
	
	{\bf Step 1:} In this step we show that for every $c\in\bR$, $\alpha|_{X(U_c)}=u_c+\dbar\beta_c$ for some $u_c\in\sH^{n,q}(X(U_c),E,h)$ and $\beta_c\in \Gamma(X(U_c),\sD^{n,q-1}_{X,ds^2}(E,h))$. 
	
	Fix $0<c_0<c_1$. Denote $\omega_{ds^2}$ to be the K\"ahler form associated to $ds^2$ and let $\omega_{\lambda}:=\omega_{ds^2}+\sqrt{-1}\ddbar\lambda(\varphi-c)$ for some smooth convex function $\lambda:\bR\to\bR_{\geq0}$ such that $\lambda(t)=0$ if $t\leq c_0$, $\lambda(t)>0$, $\lambda'(t)>0$, $\lambda''(t)>0$ if $t>c_0$ and $\int_0^{c_1}\sqrt{\lambda''(t)}dt=+\infty$. Let $h_\lambda:=e^{-\lambda(\varphi)}h$. Then $\omega_\lambda\geq\omega_{ds^2}$ is a complete K\"ahler metric on $X(U_{c+c_1})$ and $(E,h_\lambda)$ is Nakano semi-positive. By choosing $\lambda$ sufficiently large we assume that $\alpha\in L^{n,q}_{(2)}(X(U_{c+c_1}),E;\omega_\lambda,h_\lambda)$. Noting that  $\dbar\alpha=0$, there is a unique decomposition $\alpha=v_c+\gamma_c$ so that $v_c$ is a harmonic form on $U_{c+c_1}$ with respect to $\omega_\lambda$ and $h_\lambda$ while $\gamma_c$ lies in the closure of the range of $\dbar$ in the Hilbert space $L^{n,q}_{(2)}(X(U_{c+c_1}),E;\omega_\lambda,h_\lambda)$. Since $\omega_\lambda|_{U_c}=\omega_{ds^2}|_{U_c}$ and $h_\lambda|_{U_c}=h|_{U_c}$, by Proposition \ref{prop_keytech} and Proposition \ref{prop_rangeclosure_solvedbar} below, we see that $v_c|_{U_c}\in \sH^{n,q}(X(U_c),E,h)$ and $\gamma_c|_{U_c}=\dbar\beta_c$ for some $\beta_c\in \Gamma(X(U_{c}),\sD^{n,q-1}_{X,ds^2}(E,h))$.
	
	{\bf Step 2:} By step 1, there is a decomposition $$\alpha|_{X(U_k)}=u_k+\dbar\beta_k$$ where $u_k\in\sH^{n,q}(X(U_k),E,h)$ and $\beta_k\in \Gamma(X(U_k),\sD^{n,q-1}_{X,ds^2}(E,h))$ for every $k\in\bN$. Since $u_{k+1}$ and $u_k$ are cohomologous on $X(U_k)$, we have $u_{k+1}|_{X(U_k)}=u_k$ by Claim 1. Hence there is a unique $u\in\sH^{n,q}(X(U),E,h)$ such that $u|_{X(U_k)}=u_k$. Hence the cohomology classes $[\alpha]$ and $[u]$ are equal  over every $X(U_k)$, $k\in\bN$. Since $U_k$ is a Stein space for each $k\in\bN\cup\{+\infty\}$ and $S_X(E,h)$ is a coherent sheaf, there is a canonical isomorphism
    $$\Gamma(X(U_k),\sD^{n,q}_{X,ds^2}(E,h))\simeq H^q(X(U_k),S_X(E,h))\simeq \Gamma(U_k,R^qf_\ast(S_X(E,h)))$$
    for every $k\in\bN\cup\{+\infty\}$. By regarding $[\alpha]$ and $[u]$ as sections of $R^qf_\ast(S_X(E,h))$ (which are equal over every $U_k$, $k\in\bN$) we obtain that $[\alpha]=[u]$. This proves the surjectivity of $\tau^q_U$.
\end{proof}	
To complete the proof of Theorem \ref{thm_thm_harmonic_rep}, we go on proving the following proposition.
\begin{prop}\label{prop_rangeclosure_solvedbar}
	Assume that $S_X(E,h)$ is a coherent sheaf on $X$. Let $V\subset X$ be an open subset and denote $V^o:=V\cap X^o$. Let $$\dbar:L^{n,q-1}_{(2)}(V^o,E;ds^2,h)\to L^{n,q}_{(2)}(V^o,E;ds^2,h)$$ be the unbounded operator in the sense of distribution where $q\geq 1$. Suppose that  $\alpha\in\overline{{\rm Im}\dbar}$. Then there is an $(n,q-1)$-form $\beta\in L^{n,q-1}_{V,ds^2}(E,h)(V)$ such that $\alpha=\dbar\beta$.
\end{prop}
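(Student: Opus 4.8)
First I would check that $\alpha$ is $\dbar$-closed: writing $\alpha=\lim_k\dbar\beta_k$ in $L^{n,q}_{(2)}(V^o,E;ds^2,h)$ and using that the maximal $\dbar$ is a closed operator together with $\dbar\dbar\beta_k=0$, one gets $\alpha\in\mathrm{Dom}(\dbar_{\mathrm{max}})$ and $\dbar\alpha=0$. Since the metric $ds^2$ of our setting satisfies the hypotheses of Theorem \ref{thm_main_local1} and Lemma \ref{lem_fine_sheaf}, the complex $\sD^{n,\bullet}_{X,ds^2}(E,h)$ is a resolution of the coherent sheaf $S_X(E,h)$ by fine sheaves; as $X$ is paracompact this gives $H^q\big(\Gamma(V,\sD^{n,\bullet}_{X,ds^2}(E,h))\big)\simeq H^q(V,S_X(E,h))$. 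A locally $L^2$ form $\beta$ with $\dbar\beta=\alpha$ is precisely an element of $\sD^{n,q-1}_{X,ds^2}(E,h)(V)\subset L^{n,q-1}_{V,ds^2}(E,h)(V)$ trivialising the class $[\alpha]$ of $\alpha$, viewed as a $\dbar$-closed section of $\sD^{n,q}$. Hence it suffices to prove $[\alpha]=0$ in $H^q(V,S_X(E,h))$.

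\textbf{The class map.} Equip $\Gamma(V,\sD^{n,q}_{X,ds^2}(E,h))$ with the Fr\'echet topology of $L^2$-convergence on a compact exhaustion of $V$; then $\dbar$ is continuous and the inclusion $L^{n,q}_{(2)}(V^o,E;ds^2,h)\hookrightarrow\Gamma(V,\sD^{n,q}_{X,ds^2}(E,h))$ is continuous, since global $L^2$-convergence implies $L^2$-convergence on each compact set. Therefore $\alpha\mapsto[\alpha]$ defines a continuous linear map
$$c:\ker\left(\dbar:L^{n,q}_{(2)}(V^o,E;ds^2,h)\to L^{n,q+1}_{(2)}\right)\longrightarrow H^q(V,S_X(E,h)).$$
If $\alpha=\dbar\beta$ with $\beta\in L^{n,q-1}_{(2)}(V^o,E;ds^2,h)$, then $\beta$ is in particular locally $L^2$ with $\dbar\beta=\alpha$ locally $L^2$, so $\beta\in\Gamma(V,\sD^{n,q-1})$ and $c(\alpha)=0$. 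Thus $\mathrm{Im}\,\dbar_{(2)}\subset\ker c$.

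\textbf{Separatedness (the crux).} The decisive input is that $H^q(V,S_X(E,h))$, with the quotient topology from the Fr\'echet complex $\Gamma(V,\sD^{n,\bullet})$, is Hausdorff; equivalently, $\dbar$ has closed range on global sections. Since $S_X(E,h)$ is coherent and, in every situation where this proposition is applied, $V$ is the proper $f$-preimage of a relatively compact weakly pseudoconvex (hence holomorphically convex) piece of the base, this is exactly the separatedness of the cohomology of a coherent analytic sheaf on a holomorphically convex space. For a general open $V$ one reduces to this case by exhausting $V$ by such relatively compact pieces $V_j\Subset V$, solving on each $V_j$ — noting $\alpha|_{V_j}\in\overline{\mathrm{Im}\,\dbar}$ because restriction is norm-decreasing — and gluing the solutions by the Mittag--Leffler procedure used in Step 2 of the proof of Theorem \ref{thm_thm_harmonic_rep}. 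Granting Hausdorffness, $\ker c$ is closed; combined with $\mathrm{Im}\,\dbar_{(2)}\subset\ker c$ this gives $\overline{\mathrm{Im}\,\dbar_{(2)}}\subset\ker c$, and since $\alpha\in\overline{\mathrm{Im}\,\dbar}$ we conclude $c(\alpha)=[\alpha]=0$.

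\textbf{Conclusion.} With $[\alpha]=0$ in $H^q(V,S_X(E,h))\simeq H^q(\Gamma(V,\sD^{n,\bullet}))$ there is $\beta\in\Gamma(V,\sD^{n,q-1}_{X,ds^2}(E,h))$ with $\dbar\beta=\alpha$; by definition of $\sD^{n,q-1}$ this $\beta$ lies in $L^{n,q-1}_{V,ds^2}(E,h)(V)$, which is the desired locally $L^2$ solution. The main obstacle is the third step: upgrading from local solvability (automatic by Theorem \ref{thm_main_local1}) to a solution defined on all of $V$ forces one to kill the class $[\alpha]$, and this rests on the closed-range/Hausdorffness property, where the coherence of $S_X(E,h)$ together with the holomorphic convexity of the relevant $V$ is indispensable; the remaining steps are formal.
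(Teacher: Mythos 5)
Your skeleton --- reduce to showing $[\alpha]=0$ in $H^q(V,S_X(E,h))\simeq H^q(\Gamma(V,\sD^{n,\bullet}_{X,ds^2}(E,h)))$, observe that globally $L^2$-exact forms have trivial class, and conclude from a closed-range/Hausdorffness property --- is the same strategy as the paper's, and you have correctly located the crux. But the step you label ``Separatedness'' has a genuine gap. The Hausdorffness you need is for the quotient topology on $H^q(\Gamma(V,\sD^{n,\bullet}))$ induced by local $L^2$ convergence, whereas the separatedness results you want to invoke for coherent sheaves concern the canonical topology on coherent cohomology, defined via \v{C}ech cochains of a Stein cover with uniform convergence on compact sets. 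Bridging these two topologies is not ``formal''; it is precisely the technical content of the paper's proof. There one runs the \v{C}ech--Dolbeault zigzag with quantitative control: Lemma \ref{lem_estimate} (a version of Theorem \ref{thm_Hormander_incomplete} whose constant depends only on $q$ and the bounded potential, not on the form) attaches to $\alpha$ a \v{C}ech cocycle and to each $\nu_m=\alpha-\dbar\gamma_m$ a cochain $\rho_m$ with $\|\alpha_{\check{C}}-\delta\rho_m\|\leq C\|\nu_m\|$ uniformly in $m$; then \cite[Theorem 2.2.3]{Hormander1990} upgrades $L^2$ convergence of these holomorphic \v{C}ech data to uniform convergence on compacts, and only then is the closedness of $\delta C^{q-1}(\{V_j\},S_X(E,h))$ in the canonical Fr\'echet topology of a coherent sheaf applicable. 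Your argument assumes this bridge silently.

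A second problem is that the proposition is stated for an arbitrary open $V\subset X$, and your fallback for general $V$ --- exhaust by holomorphically convex pieces, solve on each, and glue ``by the Mittag--Leffler procedure used in Step 2 of Theorem \ref{thm_thm_harmonic_rep}'' --- does not transfer. In that step the compatibility of the pieces $u_k$ under restriction is forced by the uniqueness of harmonic representatives (Claim 1 there); general solutions of $\dbar\beta_j=\alpha|_{V_j}$ enjoy no such uniqueness, they differ by $\dbar$-closed $(n,q-1)$-forms, so for $q\geq2$ the obstruction to gluing lives in $H^{q-1}$ of the pieces and would require a Runge-type approximation that you do not supply. Likewise, ``in every situation where this proposition is applied, $V$ is holomorphically convex'' is a remark about the applications, not a proof of the statement. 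The paper sidesteps both issues by working from the outset with a single locally finite Stein cover of all of $V$ and carrying the $L^2$ approximation through the entire \v{C}ech ladder.
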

\begin{proof}
To prove the proposition, we need the following lemma which is a modified version of Theorem \ref{thm_Hormander_incomplete} and we leave its proof to the end of this section.
\begin{lem}\label{lem_estimate}
	Let $Y$ be a complex manifold of dimension $n$ which admits a complete K\"ahler metric. Let $\omega=\sqrt{-1}\ddbar\varphi$ be a K\"ahler metric of $Y$ with $\sup|\varphi|<\infty$. Let $(E,h)$ be a Nakano semi-positive hermitian vector bundle on $Y$. Then for every $q>0$ and every $\alpha\in L^{n,q}_{(2)}(Y,E;\omega,h)$ such that $\dbar\alpha=0$, there is $\beta\in L^{n,q-1}_{(2)}(Y,E;\omega,h)$ such that $\dbar\beta=\alpha$ and $\|\beta\|^2_{\omega,h}\leq C\|\alpha\|^2_{\omega,h}$. Here  $C$ is a constant depending on $q$ and $\varphi$, but not depending on $\alpha$.
\end{lem}
\begin{proof}
	Let $h':=he^{-\varphi}$, then $h'\sim h$. Since $(E,h)$ is Nakano semi-positive, we know that 
	$$\sqrt{-1}\Theta_{h'}(E)=\sqrt{-1}\partial\dbar \varphi\otimes {\rm Id}_E+\sqrt{-1}\Theta_h(E)\geq \omega\otimes {\rm Id}_E.$$
	By Theorem \ref{thm_Hormander_incomplete}, there exists $\beta\in L_{(2)}^{n,q-1}(Y,E;\omega,he^{-\varphi})$ such that $\dbar \beta=\alpha$ and 
	$\|\beta\|_{\omega,he^{-\varphi}}^2\leq \frac{1}{q}\|\alpha\|_{\omega,h^{-\varphi}}$. Since $\varphi$ is a bounded function, we have $\|\beta\|^2_{\omega,h}\leq C\|\alpha\|^2_{\omega,h}$ where $C$ is a constant depending on $q$ and $\varphi$ and not depending on $\alpha$. The lemma is proved.
\end{proof}
	Let us return to the proof of the proposition. First we take a locally finite Stein open cover $\{V_j\}$ of $V$. Denote $V_{j_0,j_1,\dots,j_p}:=V_{j_1}\cap \cdots \cap V_{j_p}$. By Lemma \ref{lem_complete_metric_exists_locally}, after a possible refinement we assume that there is a complete K\"ahler metric on $V_{j_0,j_1,\dots,j_p}^o:=V_{j_0,j_1,\dots,j_p}\cap X^o$ which has a bounded potential for every $j_0,j_1,\dots,j_p$.  Since $\alpha\in \overline{{\rm Im}\dbar}\subset L^{n,q}_{(2)}(V^o,E;ds^2,h) $ and $\rm Ker \dbar$ is closed in $L^{n,q}_{(2)}(V^o,E;ds^2,h)$, we know that $\dbar \alpha=0$. 
	
	Since $S_X(E,h)$ is a coherent sheaf on $X$, by Theorem \ref{thm_main_local1} and Lemma \ref{lem_fine_sheaf} there are isomorphisms of cohomologies
	\begin{align}\label{align_Cech_vs_L2}
	H^k(\{V_j\},S_X(E,h))\simeq H^k(V,S_X(E,h))\simeq H^k(\Gamma(V,\sD^\bullet_{X,E;ds^2,h})),
	\end{align}
	where $H^k(\{V_j\},S_X(E,h))$ is the \v{C}ech cohomology with respect to the covering $\{V_j\}$.
	
	Let us recall the corresponding \v{C}ech cocycle of $\alpha$.
	Let $\alpha_j=\alpha|_{V_j^o}$. By Lemma \ref{lem_estimate}, there exists $b_j\in L_{(2)}^{n,q-1}(V_j^o,E;ds^2,h)$ such that $\alpha_j=\dbar b_j$ on $V_j^o$ for each $j$.
	
	Suppose that $\{\alpha_{j_0,\dots,j_r}\}$ and $\{b_{j_0,\dots,j_r}\}$ are determined in the way as:
	\begin{align}\label{cech}
&b_{j_0,\dots,j_r}\in L_{(2)}^{n,q-r-1}(V_{j_0,j_1,\dots,j_p}^o,E;ds^2,h),\quad
\alpha_{j_0,\dots,j_r}\in L_{(2)}^{n,q-r}(V_{j_0,j_1,\dots,j_p}^o,E;ds^2,h),\\\nonumber
&\alpha_{j_0,\dots,j_r}=\dbar b_{j_0,\dots,j_r} \quad{\rm on}\quad V_{j_0,\dots,j_r}^o \quad {\rm and} \quad (\delta \alpha)_{j_0,\dots,j_{r+1}}=0.
	\end{align}
	
Set $$\alpha_{j_0,\dots,j_{r+1}}:=(\delta  b)_{j_0,\dots,j_{r+1}},$$
which is a $\dbar$-closed form. It follows from Lemma \ref{lem_estimate} that the same statements in (\ref{cech}) also hold for $\alpha_{j_0,\dots,j_{r+1}}$. Repeating the above steps, we can obtain a $q$-cocycle $\{\alpha_{j_0,\dots,j_{q}}\}\in Z^q(\{V_j\},S_X(E,h))$ which corresponds to $\alpha$ by (\ref{align_Cech_vs_L2}). 
	
	By the hypothesis there exits a sequence $\{\gamma_m\}\in L_{(2)}^{n,q-1}(V^o,E;ds^2,h)$ such that $\|\alpha-\dbar\gamma_m\|\rightarrow 0$ as $m\rightarrow \infty$. Let $\nu_m:=\alpha-\dbar \gamma_m$. Since $\dbar \nu_m=0$, by Lemma \ref{lem_estimate} there exists $\{\mu_{j,m}\}\in L_{(2)}^{n,q-1}(V_j^o,E;ds^2,h)$ such that $\nu_m=\dbar \mu_{j,m}$ and $\|\mu_{j,m}\|\leq C_j\|\nu_m\|$ for $C_j$ independent of $m$ but depending on $j$.
	Set $\rho_{j,m}:=b_j-\mu_{j,m}-\gamma_m|_{V_j^o}$. Then we have $\dbar \rho_{j,m}=0$, $\dbar(\alpha_{ij}-\delta\{\rho_{j,m}\})=0$ and $\|\alpha_{ij}-\delta\{\rho_{j,m}\}\|\leq C_{ij}\|\nu_m\|$.
	
	Suppose that $\{\rho_{j_0,\dots,j_{r},m}\}$ are already determined as:
\begin{align}\label{cech2}
&\rho_{j_0,\dots,j_{r},m}\in L_{(2)}^{n,q-r-1}(V_{j_0,\dots,j_r}^o,E;ds^2,h), \dbar(\alpha_{j_0,\dots,j_{r+1}}-(\delta\rho)_{j_0,\dots.,j_{r+1},m})=0,\\\nonumber
&\dbar \rho_{j_0,\dots,j_{r},m}=0,\|\alpha_{j_0,\dots,j_{r+1}}-(\delta\rho)_{j_0,\dots.,j_{r+1},m}\|\leq C_{j_0,\dots,j_{r+1}}\|\nu_m\|.
\end{align}
	We construct $\{\rho_{j_0,\dots,j_{r+1},m}\}$ as follows. (\ref{cech2}) and Lemma \ref{lem_estimate} imply that there exists $\gamma_{j_0,\dots,j_r,m}\in L_{(2)}^{n,q-r-2}(V_{j_0,\dots,j_r}^o,E;ds^2,h)$ and $\mu_{j_0,\dots,j_{r+1},m}\in L_{(2)}^{n,q-r-2}(V_{j_0,\dots,j_{r+1}}^o,E;ds^2,h)$ such that $\rho_{j_0,\dots,j_{r},m}=\dbar \gamma_{j_0,\dots,j_r,m}$, $\alpha_{j_0,\dots,j_{r+1}}-(\delta \rho_m)_{j_0,\dots,j_{r+1}}=\dbar\mu_{j_0,\dots,j_{r+1},m}$ and $\|\mu_{j_0,\dots,j_{r+1},m}\|\leq C_{j_0,\dots,j_{r+1}}\|\nu_{m}\|$. Then we set $$\rho_{j_0,\dots,j_{r+1},m}=b_{j_0,\dots,j_{r+1}}-\mu_{j_0,\dots,j_{r+1},m}-(\delta \gamma)_{j_0,\dots,j_{r+1},m},$$
	which satisfies the statements in (\ref{cech2}). Repeating the above steps, we obtain a $q-1$ cochain $\rho_m=\{\rho_{j_0,\dots,j_{q-1},m}\}\in C^{q-1}(\{V_j\},S_X(E,h))$ such that $\|\alpha-\delta \rho_m\|\rightarrow 0$ as $m\rightarrow \infty$. By \cite[Theorem 2.2.3]{Hormander1990}, $\delta \rho_m$ tends to $\alpha$ as $m\rightarrow \infty$ uniformly on compact subsets of $X$.  Since $S_X(E,h)$ is coherent, $\delta C^{q-1}(\{V_j\},S_X(E,h))$ is a closed subspace of $C^{q}(\{V_j\},S_X(E,h))$ with respect to the topology of uniform convergence on compact subsets. Hence there exists $\rho\in C^{q-1}(\{V_j\},S_X(E,h))$ such that $\delta \rho=\alpha$. By (\ref{align_Cech_vs_L2}), $[\alpha]=0\in H^{q}(\Gamma(V,\sD^\bullet_{X,E;ds^2,h}))$. Thus we prove the proposition.	
\end{proof}
\section{An abstract Koll\'ar package}\label{section_abstract_Kollar_package}
\begin{defn}{\cite[Definition 6.1]{Takegoshi1995}}\label{defn_loc_Kahler}
	A morphism $f:X\rightarrow Y$ between complex spaces is locally K\"ahler if $f^{-1}U$ is a K\"ahler space for any relatively compact open subset $U\subset Y$.
\end{defn}
Throughout this section, $f:X\rightarrow Y$ is a proper locally K\"ahler morphism from a complex space $X$ to an irreducible complex space $Y$. Assume that every irreducible component of $X$ is mapped onto $Y$. $X^o\subset X_{\rm reg}$ is a dense Zariski open subset and $(E,h)$ is a hermitian vector bundle on $X^o$ with a Nakano semi-positive curvature. In this section we establish the Koll\'ar package of $S_X(E,h)$ under the coherence assumptions.
\begin{thm}[Torsion Freeness]
	 Assume that $S_X(E,h)$ is a coherent sheaf on $X$, then $R^qf_\ast S_X(E,h)$ is torsion free for every $q\geq0$ and vanishes if $q>\dim X-\dim Y$.
\end{thm}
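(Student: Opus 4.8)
The plan is to reduce both assertions to statements about harmonic forms via the harmonic representation of Theorem \ref{thm_thm_harmonic_rep}. Since $S_X(E,h)$ is coherent, that theorem gives an isomorphism of $\sO_Y$-modules $\sH^{n,q}_f(E,h)\xrightarrow{\sim}R^qf_\ast S_X(E,h)$ with $\sH^{n,q}_f(E,h)(U)=\sH^{n,q}(X(U),E,h)$ for every Stein open $U\subset Y$, the $\sO_Y$-structure being the one in which $g\in\sO_Y(U)$ acts by multiplication with $f^\ast g$ (as recorded in the proof of Theorem \ref{thm_thm_harmonic_rep}). It therefore suffices to analyze the spaces $\sH^{n,q}(X(U),E,h)$ over Stein open subsets $U$.

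The vanishing is immediate: if $q>n-m=\dim X-\dim Y$, then Proposition \ref{prop_restriction_welldefined}(1) gives $\sH^{n,q}(X(U),E,h)=0$ for every Stein open $U$, so the sheaf $\sH^{n,q}_f(E,h)$, and hence $R^qf_\ast S_X(E,h)$, vanishes.

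For torsion freeness it is enough to check that, for every Stein open $U\subset Y$ and every $g\in\sO_Y(U)$ that is a non-zero-divisor, multiplication by $g$ is injective on $\sH^{n,q}(X(U),E,h)$. So let $\alpha\in\sH^{n,q}(X(U),E,h)$ satisfy $(f^\ast g)\alpha=0$. By Proposition \ref{prop_restriction_welldefined} (see also Proposition \ref{prop_sH}(3)) the Hodge star $\ast\alpha$ is a $\dbar$-closed $(n-q,0)$-form, i.e. a holomorphic $E$-valued $(n-q)$-form on $X(U)^o$. Since the Hodge star is $C^\infty$-linear, it commutes with multiplication by the holomorphic function $f^\ast g$, whence $(f^\ast g)\,\ast\alpha=\ast\big((f^\ast g)\alpha\big)=0$. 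Because $Y$ is reduced and irreducible and every irreducible component of $X$ is mapped onto $Y$, the pullback $f^\ast g$ does not vanish identically on any irreducible component of $X(U)$; hence its zero set is nowhere dense and $\ast\alpha$ vanishes on a dense open subset of $X(U)^o$. The identity theorem for holomorphic sections then forces $\ast\alpha=0$, and the pointwise relation $L^q\circ\ast=c(n,q)\,\mathrm{Id}$ from (\ref{align_Last=1}) yields $\alpha=0$. Letting $U$ range over a basis of Stein open subsets completes the proof.

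The analytic substance is entirely contained in Theorem \ref{thm_thm_harmonic_rep} and Proposition \ref{prop_restriction_welldefined}; the only delicate point in the argument above is the passage from the generic vanishing of $\ast\alpha$ to $\ast\alpha\equiv0$. This is precisely where the hypotheses that $Y$ is irreducible and that every component of $X$ dominates $Y$ enter, ensuring $f^\ast g\not\equiv0$ on each component of $X(U)$ so that the identity theorem applies branch by branch.
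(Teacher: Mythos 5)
Your proof is correct and follows essentially the same route as the paper: both reduce to the harmonic representation of Theorem \ref{thm_thm_harmonic_rep}, use the Hodge star to land in holomorphic $E$-valued $(n-q)$-forms together with the identity $L^q\circ\ast=c(n,q)\,\mathrm{Id}$ from (\ref{align_Last=1}), and deduce the vanishing from Proposition \ref{prop_restriction_welldefined}(1). The only difference is cosmetic: the paper packages the argument as an embedding of $R^qf_\ast S_X(E,h)$ into the torsion-free sheaf $f_\ast\Omega^{n-q}_{X,ds^2,(2)}(E,h)$, whereas you unwind that torsion-freeness explicitly via the identity theorem applied to $(f^\ast g)\ast\alpha=0$.
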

\begin{proof}
	Since the problem is local, we assume that $Y$ is Stein and there is a K\"ahler metric on $X^o$ which is locally complete and locally bounded from below by a hermitian metric and which admits a $(\infty,1)$ potential locally on $X$ (Lemma \ref{lem_complete_metric_exists_locally}). Define the sheaf $\Omega^{p}_{X,ds^2,(2)}(E,h)$ as 
	$$\Omega^{p}_{X,ds^2,(2)}(E,h)(U)=\left\{\alpha\in\sD^{p,0}_{X,ds^2}(E,h)(U)\bigg|\dbar\alpha=0\right\}$$
	for every open subset $U\subset X$.
	
	By Proposition \ref{prop_sH}-(3), the Hodge star operator induces a well defined map
	$$\ast:\sH^{n,q}_f(E,h)\to f_\ast \Omega^{n-q}_{X,ds^2,(2)}(E,h).$$
	Since the Lefschetz operator $L$ with respect to $ds^2$ is bounded and $[L,\dbar]=0$, taking the finesss of $\sD^{n,\bullet}_{X,ds^2}(E,h)$ (Lemma \ref{lem_fine_sheaf}) into account we get a well defined map
	$$L^q:f_\ast \Omega^{n-q}_{X,ds^2,(2)}(E,h)\to R^qf_\ast(\sD^{n,\bullet}_{X,ds^2}(E,h)).$$
	By Theorem \ref{thm_main_local1} and Theorem \ref{thm_thm_harmonic_rep}, we get the morphisms
	$$R^qf_\ast S_X(E,h)\stackrel{\ast}{\to}f_\ast \Omega^{n-q}_{X,ds^2,(2)}(E,h)\stackrel{L^q}{\to}R^qf_\ast S_X(E,h)$$
	such that
	\begin{align}
	L^q\circ\ast=c(n,q){\rm Id},\quad c(n,q)=\sqrt{-1}^{(n-q)(n-q+3)}q!\quad (\ref{align_Last=1}).
	\end{align}
	This proves the first claim since $f_\ast \Omega^{n-q}_{X,ds^2,(2)}(E,h)$ is torsion free for every $q\geq 0$. The vanishing result follows from  Proposition \ref{prop_restriction_welldefined}-(1).
\end{proof}
\begin{thm}[Injectivity Theorem]\label{thm_injectivity_formal}
	Assume that $S_X(E,h)$ is a coherent sheaf on $X$. If $L$ is a semi-positive holomorphic line bundle on $X$ so that $L^l$ admits a nonzero holomorphic global section $s$ for some $l>0$, then the canonical morphism
	$$R^qf_\ast(\otimes s):R^qf_\ast(S_X(E,h)\otimes L^{\otimes k})\to R^qf_\ast(S_X(E,h)\otimes L^{\otimes k+l})$$
	is injective for every $q\geq0$ and every $k\geq1$.
\end{thm}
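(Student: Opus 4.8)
The plan is to transport the problem to the harmonic representation of Theorem \ref{thm_thm_harmonic_rep} and then to show that multiplication by the holomorphic section $s$ carries harmonic forms to harmonic forms. First I would fix a smooth metric $h_L$ on $L$ with $\sqrt{-1}\Theta_{h_L}(L)\geq0$, which exists because $L$ is semi-positive. For every $k\geq1$ the bundle $(E\otimes L^{\otimes k},h\otimes h_L^{\otimes k})$ is Nakano semi-positive, being the tensor product of the Nakano semi-positive $(E,h)$ with the semi-positive line bundle $(L^{\otimes k},h_L^{\otimes k})$; moreover $S_X(E,h)\otimes L^{\otimes k}\simeq S_X(E\otimes L^{\otimes k},h\otimes h_L^{\otimes k})$ is coherent by Lemma \ref{lem_Kernel} and the coherence hypothesis. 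Hence Theorem \ref{thm_thm_harmonic_rep} gives canonical isomorphisms
$$R^qf_\ast(S_X(E,h)\otimes L^{\otimes k})\simeq\sH^{n,q}_f(E\otimes L^{\otimes k},h\otimes h_L^{\otimes k}),$$
and under these identifications $R^qf_\ast(\otimes s)$ becomes, on sections over a Stein open $U\subset Y$, the map $\alpha\mapsto s\alpha$, since on the $L^2$-Dolbeault resolution $\otimes s$ is honest multiplication and commutes with $\dbar$.

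The key step is to prove that $s\alpha\in\sH^{n,q}(X(U),E\otimes L^{\otimes k+l},h'')$ whenever $\alpha\in\sH^{n,q}(X(U),E\otimes L^{\otimes k},h\otimes h_L^{\otimes k})$, where $h'':=h\otimes h_L^{\otimes k+l}$. I would verify the hypotheses of Proposition \ref{prop_sH}-(1) for $s\alpha$ relative to any $\varphi=f^\ast\varphi_U\in\sP_e(X(U))$. Since $s$ is holomorphic, $\dbar(s\alpha)=s\dbar\alpha=0$; and because $e(\dbar\varphi)^\ast$ is a pointwise contraction on the form part that commutes with the holomorphic coefficient $s$, one gets $e(\dbar\varphi)^\ast(s\alpha)=s\,e(\dbar\varphi)^\ast\alpha=0$. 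For the condition $D'^\ast_{h''}(s\alpha)=0$ I would use that the Hodge star is $\bC$-linear on forms, so $\ast(s\alpha)=s\ast\alpha$; by Proposition \ref{prop_sH}-(3) the form $\ast\alpha$ is holomorphic, hence so is $s\ast\alpha$, and therefore $D'^\ast_{h''}(s\alpha)=-\ast\dbar\ast(s\alpha)=0$.

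It then remains to treat the curvature condition. Splitting $\sqrt{-1}\Theta_{h''}=\sqrt{-1}\Theta_{h\otimes h_L^{\otimes k}}\otimes\mathrm{Id}+l\sqrt{-1}\Theta_{h_L}\cdot\mathrm{Id}$, one computes
$$\langle\sqrt{-1}e(\Theta_{h''})\Lambda(s\alpha),s\alpha\rangle_{h''}=|s|^2\Big(\langle\sqrt{-1}e(\Theta_{h\otimes h_L^{\otimes k}})\Lambda\alpha,\alpha\rangle+l\,\langle\sqrt{-1}e(\Theta_{h_L})\Lambda\alpha,\alpha\rangle\Big).$$
The first summand vanishes because $\alpha$ is harmonic. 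For the second, harmonicity of $\alpha$ gives $\langle\sqrt{-1}e(\Theta_{h\otimes h_L^{\otimes k}})\Lambda\alpha,\alpha\rangle=0$, and this quantity is the sum of the pointwise non-negative terms $\langle\sqrt{-1}e(\Theta_h)\Lambda\alpha,\alpha\rangle$ and $k\langle\sqrt{-1}e(\Theta_{h_L})\Lambda\alpha,\alpha\rangle$; since $k\geq1$ and both are $\geq0$ by Nakano semi-positivity, each vanishes, so $\langle\sqrt{-1}e(\Theta_{h_L})\Lambda\alpha,\alpha\rangle=0$. Combined with the pointwise vanishing $\langle\sqrt{-1}e(\ddbar\varphi)\Lambda\alpha,\alpha\rangle=0$ from Proposition \ref{prop_sH}-(1), this yields $\langle\sqrt{-1}e(\Theta_{h''}+\ddbar\varphi)\Lambda(s\alpha),s\alpha\rangle=0$, so Proposition \ref{prop_sH}-(1) shows $s\alpha$ is harmonic.

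With harmonicity of $s\alpha$ in hand, injectivity follows at once: under the harmonic identifications $R^qf_\ast(\otimes s)$ sends the harmonic representative $\alpha$ to the harmonic form $s\alpha$, whose class is zero precisely when $s\alpha\equiv0$ on $X^o$. As $s$ is a nonzero holomorphic section, $\{s=0\}$ is a proper analytic subset of $X$, hence nowhere dense, so vanishing of $s\alpha$ together with the $C^\infty$-regularity of $\alpha$ on $X^o$ forces $\alpha=0$ on the dense open set $\{s\neq0\}$ and therefore $\alpha\equiv0$ by continuity. This gives injectivity for every $q\geq0$ and every $k\geq1$. The main obstacle is the curvature computation of the third step, and specifically the place where $k\geq1$ is genuinely used: one needs at least one copy of $\Theta_{h_L}$ inside $\Theta_{h\otimes h_L^{\otimes k}}$ so that the otherwise merely non-negative quantity $\langle\sqrt{-1}e(\Theta_{h_L})\Lambda\alpha,\alpha\rangle$ can be isolated and shown to vanish, which is exactly what makes the product $s\alpha$ harmonic rather than only $\dbar$-closed.
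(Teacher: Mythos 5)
Your proof is correct and follows essentially the same route as the paper: reduce to the harmonic representation of Theorem \ref{thm_thm_harmonic_rep} via Lemma \ref{lem_Kernel}, and show via Proposition \ref{prop_sH}-(1) that multiplication by $s$ preserves harmonicity, the crux being the vanishing of $\langle\sqrt{-1}e(\Theta_{h_L})\Lambda\alpha,\alpha\rangle$ extracted from the harmonicity of $\alpha$ using $k\geq1$. The only (cosmetic) difference is that you isolate each non-negative curvature summand and kill it separately, whereas the paper sandwiches the whole expression between $0$ and $\frac{k+l}{k}|s|^2_{h_L^l}\langle\sqrt{-1}e(\Theta_{h\otimes h_L^k}+\ddbar\varphi)\Lambda\alpha,\alpha\rangle=0$; the two computations are equivalent, and you in fact spell out the final injectivity step ($s\alpha$ harmonic and exact implies $s\alpha=0$ implies $\alpha=0$) more explicitly than the paper does.
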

\begin{proof}
	Since the problem is local, we assume that $Y$ is Stein and there is a K\"ahler metric on $X^o$ which is locally complete and locally bounded from below by a hermitian metric and which admits a $(\infty,1)$ potential locally on $X$ (Lemma \ref{lem_complete_metric_exists_locally}). Let $h_L$ be the hermitian metric on $L$ with a semi-positive curvature, then we have
	$$S_X(E,h)\otimes L^{\otimes k}\simeq S_X(E\otimes L^{\otimes k},h\otimes h_L^k),\quad k\geq 1$$
	by Lemma \ref{lem_Kernel}.
	By Theorem \ref{thm_thm_harmonic_rep}, it is therefore sufficient to show that the canonical map
	\begin{align}
	\otimes s:\sH^{n,q}(X(U),E\otimes L^{\otimes k},h\otimes h_L^k)\to \sH^{n,q}(X(U),E\otimes L^{\otimes k+l},h\otimes h_L^{k+l})
	\end{align}
	is well defined for every Stein open subset $U$ of $Y$. 
	Let $\alpha\in\sH^{n,q}(X,E\otimes L^{\otimes k},h\otimes h_L^k,\varphi)$ for some $\varphi\in\sP_e(X)$ ($\sP_e(X)\neq \emptyset$ since $Y$ is Stein and $f$ is proper). It follows from Proposition \ref{prop_sH}-(1) that  
	$$D'^\ast_h(\alpha\otimes s)=-\ast\dbar\ast(\alpha\otimes s)=D'^\ast_h\alpha\otimes s=0$$ and
	\begin{align*}
	0&\leq\langle \sqrt{-1}e(\Theta_{h\otimes h_L^{k+l}}(E\otimes L^{k+l})+\ddbar\varphi)\Lambda(\alpha\otimes s),\alpha\otimes s\rangle_{h\otimes h_L^{k+l}}\\\nonumber
	&\leq\frac{k+l}{k}|s|_{h_L^l}^2\langle \sqrt{-1}e(\Theta_{h\otimes h_L^k}(E\otimes L^k)+\ddbar\varphi)\Lambda(\alpha),\alpha\rangle_{h\otimes h_L^k}=0.
	\end{align*}
	Hence $\alpha\otimes s\in \sH^{n,q}(X,E\otimes L^{\otimes k+l},h\otimes h_L^{k+l},\varphi)$ by Proposition \ref{prop_sH}-(1). Hence $\otimes s$ is well defined and injective. The proof is finished.
\end{proof}
\begin{thm}[Decomposition]\label{thm_decomposition_formal}
	Assume that there exists a K\"ahler metric $ds^2$ on $X^o$ such that
	\begin{enumerate}
		\item $ds^2$ admits a $(\infty,1)$ bounded potential locally on $X$;
		\item $ds^2$ is locally complete on $X$ and is locally bounded from below by a hermitian metric.
	\end{enumerate}
    Assume that $S_X(E,h)$ is a coherent sheaf on $X$. Then $Rf_\ast S_X(E,h)$ splits in $D(Y)$, i.e. 
	$$Rf_\ast S_X(E,h)\simeq \bigoplus_{q} R^qf_\ast S_X(E,h)[-q]\in D(Y).$$
	As a consequence, the spectral sequence
	\begin{align}\label{align_E2_degeneration}
	E^{pq}_2:=H^p(Y,R^qf_\ast S_X(E,h))\Rightarrow H^{p+q}(X,S_X(E,h))
	\end{align}
	degenerates at the $E_2$ page if $Y$ is compact.
\end{thm}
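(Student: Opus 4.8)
\emph{Strategy.} The plan is to obtain the splitting from the classical principle that harmonic forms split a Dolbeault-type complex, transported to the present relative $L^2$ setting through the harmonic representation of Theorem \ref{thm_thm_harmonic_rep}. Under the stated hypotheses on $ds^2$, Theorem \ref{thm_main_local1} together with Lemma \ref{lem_fine_sheaf} furnish the fine resolution $S_X(E,h)\to\sD^{n,\bullet}_{X,ds^2}(E,h)$, so that applying $f_\ast$ term by term computes the derived pushforward,
\begin{align*}
Rf_\ast S_X(E,h)\simeq f_\ast\sD^{n,\bullet}_{X,ds^2}(E,h)\in D(Y),
\end{align*}
which is exactly the content of (\ref{align_sH_to_Rf}). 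I would then exhibit the sheaf of fibrewise harmonic forms $\sH^{n,\bullet}_f(E,h)$ as a subcomplex of this fine complex carrying the \emph{zero} differential, and verify that the inclusion is an $\sO_Y$-linear quasi-isomorphism; a quasi-isomorphism emanating from a complex with vanishing differential delivers the splitting at once.

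\emph{Key steps.} First I would record that $f_\ast\sD^{n,\bullet}_{X,ds^2}(E,h)$ is a complex of $\sO_Y$-modules whose differential is $\sO_Y$-linear, since multiplication by $f^\ast g$ with $g\in\sO_Y$ commutes with $\dbar$ (because $\dbar(f^\ast g)=0$, as already used in the proof of Theorem \ref{thm_thm_harmonic_rep}). Every section of $\sH^{n,q}_f(E,h)$ is $\dbar$-closed by definition, so the natural inclusion
\begin{align*}
\iota:\bigl(\sH^{n,\bullet}_f(E,h),0\bigr)\hookrightarrow\bigl(f_\ast\sD^{n,\bullet}_{X,ds^2}(E,h),\dbar\bigr)
\end{align*}
is a morphism of complexes of $\sO_Y$-modules, the source carrying the zero differential. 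By Theorem \ref{thm_thm_harmonic_rep} (which consumes the coherence of $S_X(E,h)$) the map $\iota$ induces on the $q$-th cohomology sheaf precisely the canonical isomorphism $\sH^{n,q}_f(E,h)\xrightarrow{\sim}R^qf_\ast S_X(E,h)$, hence $\iota$ is a quasi-isomorphism. A complex with zero differential is literally $\bigoplus_q\sH^{n,q}_f(E,h)[-q]$, so composing with $\iota$ yields
\begin{align*}
Rf_\ast S_X(E,h)\simeq\bigoplus_q\sH^{n,q}_f(E,h)[-q]\simeq\bigoplus_q R^qf_\ast S_X(E,h)[-q]\in D(Y),
\end{align*}
which is the asserted decomposition.

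\emph{Degeneration.} The $E_2$-degeneration is then a formal consequence. Applying $R\Gamma(Y,-)$ to the splitting identifies the abutment as $H^k(X,S_X(E,h))=\mathbb{H}^k(Y,Rf_\ast S_X(E,h))=\bigoplus_{p+q=k}H^p(Y,R^qf_\ast S_X(E,h))$. When $Y$ is compact all these groups are finite-dimensional (Grauert coherence of $R^qf_\ast S_X(E,h)$ for the proper map $f$), so the equality of the total dimension of the $E_2$-page in total degree $k$ with $\dim H^k(X,S_X(E,h))$ forces every higher differential $d_r$ ($r\geq2$) to vanish, i.e. the Leray spectral sequence degenerates at $E_2$.

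\emph{Main obstacle.} The genuine analytic difficulty lies not in this argument but in the inputs it invokes: the harmonic representation Theorem \ref{thm_thm_harmonic_rep}, which itself rests on the Takegoshi-type vanishing of Proposition \ref{prop_sH} and the range-closure Proposition \ref{prop_rangeclosure_solvedbar}, together with the coherence of $S_X(E,h)$. Granting these, the delicate point is bookkeeping rather than estimation: one must confirm that $\sH^{n,\bullet}_f(E,h)$ is a bona fide $\sO_Y$-subsheaf of $f_\ast\sD^{n,\bullet}_{X,ds^2}(E,h)$, defined compatibly across the Stein opens entering its sheafification (using Proposition \ref{prop_sH}-(2) for independence of the auxiliary $\varphi$ and Proposition \ref{prop_restriction_welldefined} for the restriction maps); that the differential it inherits is exactly zero and not merely null-homotopic; and, most importantly, that the cohomology map induced by $\iota$ agrees \emph{on the nose} with the canonical isomorphism of Theorem \ref{thm_thm_harmonic_rep}, so that $\iota$ is a quasi-isomorphism of complexes rather than just a termwise isomorphism on cohomology sheaves. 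I expect this last compatibility to be the subtlest verification.
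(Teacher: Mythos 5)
Your proposal is correct and follows essentially the same route as the paper: the paper likewise takes the inclusion $\bigoplus_q\sH^{n,q}_f(E,h)[-q]\to f_\ast\sD^{n,\bullet}_{X,ds^2}(E,h)\simeq Rf_\ast S_X(E,h)$ and invokes Theorem \ref{thm_thm_harmonic_rep} to see it is a quasi-isomorphism, the source being a complex with zero differential. The compatibility you flag as the subtlest point is automatic here, since the canonical map of Theorem \ref{thm_thm_harmonic_rep} is by construction the one induced by this very inclusion via (\ref{align_sH_to_Rf}).
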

	\begin{rmk}
		By Lemma \ref{lem_complete_metric_exists_locally}, such kind of metric exists if $X$ is compact.
	\end{rmk}
\begin{proof}
	Let 
	\begin{align}
	\alpha:\bigoplus_{q} \sH^{n,q}_f(E,h)[-q]\to f_\ast\sD^\bullet_{X,E;ds^2,h}\simeq Rf_\ast\sD^\bullet_{X,E;ds^2,h}
	\end{align}
	be the inclusion map. 
	By Theorem \ref{thm_thm_harmonic_rep}, $\alpha$ is a quasi-isomorphism under the hypothesis that $S_X(E,h)$ is coherent. Hence by Theorem \ref{thm_main_local1} and Theorem \ref{thm_thm_harmonic_rep} we obtain that 
	$$Rf_\ast S_X(E,h)\simeq Rf_\ast\sD^\bullet_{X,E;ds^2,h}\simeq\bigoplus_{q} \sH^{n,q}_f(E,h)[-q]\simeq\bigoplus_{q} R^qf_\ast S_X(E,h)[-q]$$
	in $D(Y)$. The degeneration of the spectral sequence follows from standard arguments.
\end{proof}
\begin{thm}[Vanishing Theorem]
	Assume that $S_X(E,h)$ is a coherent sheaf on $X$.
	If $Y$ is a projective algebraic variety and $L$ is an ample line bundle on $Y$, then
	$$H^q(Y,R^pf_\ast S_X(E,h)\otimes L)=0,\quad \forall q>0,\forall p\geq0.$$
\end{thm}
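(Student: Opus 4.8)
The strategy is to reduce the asserted vanishing on $Y$ to a global injectivity statement on $X$, supplied by the injectivity theorem (Theorem~\ref{thm_injectivity_formal}) applied to the structure morphism, and then to descend by Serre vanishing. First I would record that $X$ is a compact K\"ahler space: $Y$ is projective, hence compact, so $X=f^{-1}(Y)$ is compact because $f$ is proper, and applying the locally K\"ahler hypothesis to the relatively compact subset $Y\subset Y$ shows that $X=f^{-1}(Y)$ is K\"ahler. Consequently the constant map $g\colon X\to\{{\rm pt}\}$ is again a proper locally K\"ahler morphism onto an irreducible (zero-dimensional) complex space, every irreducible component of $X$ is mapped onto the point, and $\sP_e(X)\neq\emptyset$ (any constant function qualifies, $X$ being compact). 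Thus Theorems~\ref{thm_injectivity_formal}, \ref{thm_decomposition_formal} and \ref{thm_thm_harmonic_rep} are all available for $g$ as well as for $f$.

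Since $L$ is ample on the projective variety $Y$, I would fix $l\gg0$ together with a nonzero section $s\in H^0(Y,L^{\otimes l})$, and equip $L$ with a smooth metric of semi-positive curvature arising from a projective embedding. Then $f^\ast L$ is a semi-positive line bundle on $X$ (the pullback of a semi-positive form is semi-positive on $X^o$), and $f^\ast s\in H^0(X,(f^\ast L)^{\otimes l})$ is nonzero because $f$ is dominant. Applying the injectivity theorem to $g$ with the line bundle $f^\ast L$ and the section $f^\ast s$, I obtain that
$$\times f^\ast s\colon H^i\!\left(X,S_X(E,h)\otimes(f^\ast L)^{\otimes k}\right)\longrightarrow H^i\!\left(X,S_X(E,h)\otimes(f^\ast L)^{\otimes k+l}\right)$$
is injective for every $i\geq0$ and every $k\geq1$.

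To transport this to $Y$, I would invoke the projection formula $Rf_\ast\!\left(S_X(E,h)\otimes(f^\ast L)^{\otimes k}\right)\simeq Rf_\ast S_X(E,h)\otimes L^{\otimes k}$ and tensor the splitting of Theorem~\ref{thm_decomposition_formal} with the locally free sheaf $L^{\otimes k}$; taking hypercohomology over $Y$ then gives
$$H^i\!\left(X,S_X(E,h)\otimes(f^\ast L)^{\otimes k}\right)\simeq\bigoplus_{q+p=i}H^q\!\left(Y,R^pf_\ast S_X(E,h)\otimes L^{\otimes k}\right).$$
Because $s\colon L^{\otimes k}\to L^{\otimes k+l}$ is an $\sO_Y$-linear map of invertible sheaves, tensoring the decomposition isomorphism with it shows that $\times f^\ast s$ is diagonal with respect to this splitting, acting on the $(q,p)$-summand as $\times s$. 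The injectivity just established therefore forces
$$\times s\colon H^q\!\left(Y,R^pf_\ast S_X(E,h)\otimes L^{\otimes k}\right)\longrightarrow H^q\!\left(Y,R^pf_\ast S_X(E,h)\otimes L^{\otimes k+l}\right)$$
to be injective for all $q,p$ and all $k\geq1$.

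Finally I would run the Serre descent. Fix $q>0$ and $p\geq0$. The sheaf $R^pf_\ast S_X(E,h)$ is coherent by Grauert's direct image theorem, so Serre vanishing provides $N$ with $H^q\!\left(Y,R^pf_\ast S_X(E,h)\otimes L^{\otimes(1+Nl)}\right)=0$. The $N$-fold composite of the maps above, namely multiplication by $s^N$, embeds $H^q\!\left(Y,R^pf_\ast S_X(E,h)\otimes L\right)$ injectively into this vanishing group, whence $H^q\!\left(Y,R^pf_\ast S_X(E,h)\otimes L\right)=0$. The delicate points to verify are that the injectivity theorem genuinely applies to the constant morphism $g$ (which rests on the compact K\"ahler structure established in the first step) and that $\times f^\ast s$ is diagonal for the decomposition, i.e. the naturality of the splitting under tensoring by $s$; granting these, the remaining inputs are only the projection formula and ordinary Serre vanishing.
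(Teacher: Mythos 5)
Your proof is correct and follows essentially the same route as the paper: apply the injectivity theorem (via the constant map, with the semi-positive bundle $f^\ast L$ and section $f^\ast s$), use the decomposition theorem to split the global injectivity on $X$ into injectivity of $\times s$ on each $H^q(Y,R^pf_\ast S_X(E,h)\otimes L^{\otimes k})$, and conclude by Serre vanishing. The only cosmetic differences are that you iterate $\times s$ starting from $L^{\otimes 1}$ and invoke the projection formula where the paper instead uses Lemma \ref{lem_Kernel} and chooses $l$ large enough for a single application; your explicit check that $X$ is compact K\"ahler (so the injectivity and decomposition theorems apply to $X\to\{\mathrm{pt}\}$) is a point the paper leaves implicit.
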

\begin{proof}
	Since $S_X(E,h)$ is coherent, so is $R^pf_\ast S_X(E,h)$. Then there is $k$ large enough so that $H^0(Y,L^{\otimes k})\neq 0$ and 
	\begin{align}\label{align_Serre_vanishing}
	H^q(Y,R^pf_\ast S_X(E,h)\otimes L^{k+1})=0,\quad \forall q>0,\forall p\geq0.
	\end{align}
	By Lemma \ref{lem_Kernel}, we acquire that 
	\begin{align}\label{align_vanishing_1}
	S_X(E,h)\otimes f^\ast L^{\otimes l}\simeq S_X(E\otimes L^{\otimes l},h\otimes f^\ast h_L^l),\quad \forall l>0
	\end{align}
	where $h_L$ is a hermitian metric on $L$ with positive curvature.
	By Theorem \ref{thm_injectivity_formal}, the canonical map
	$$\otimes f^\ast s:H^q(X,S_X(E\otimes L,h\otimes f^\ast h_L))\to H^q(X,S_X(E\otimes L^{\otimes k+1},h\otimes f^\ast h_L^{k+1})),\quad \forall q\geq0$$
	is injective. By Theorem \ref{thm_decomposition_formal}, we know that the canonical map
	$$\otimes s:H^q(Y,R^pf_\ast S_X(E\otimes L,h\otimes f^\ast h_L))\to H^q(Y,R^pf_\ast S_X(E\otimes L^{\otimes k+1},h\otimes f^\ast h_L^{k+1})),\quad \forall p,q\geq0$$
	is injective. Combining this with (\ref{align_Serre_vanishing}) and (\ref{align_vanishing_1}), we prove the theorem.
\end{proof}
\section{Non-abelian Hodge theory and Koll\'ar package}
\subsection{Harmonic bundle and variation of Hodge structure}
The notion of harmonic bundle is used by Simpson \cite{Simpson1992} to establish a correspondence between local systems and semistable higgs bundles with vanishing Chern classes over a compact K\"ahler manifold. A typical example of harmonic bundles comes from a polarized variation of Hodge structure (loc. cit.). A harmonic bundle produces a $\lambda$-connection structure which gives a $\sD$-module on $\lambda=1$ and a higgs bundle on $\lambda=0$. This is the main subject of non-abelian Hodge theory. We only review the necessary knowledge of this topic that is used in the present paper. Readers may consult \cite{Simpson1988,Simpson1990,Simpson1992,Sabbah2005,Mochizuki20072,Mochizuki20071} for more details.

Let $(M,ds^2)$ be a K\"ahler manifold and $(\cV,\nabla)$ a holomorphic vector bundle with a flat connection on $M$. Let $h$ be a hermitian metric on $\cV$ which is not necessarily compatible with $\nabla$. 

Let $\nabla=\nabla^{1,0}+\nabla^{0,1}$ be the bi-degree decomposition. There are unique operators
$\delta'_h$ and $\delta''_h$ so that $\nabla^{1,0}+\delta''_h$ and $\nabla^{0,1}+\delta'_h$ are connections compatible with $h$. Denote $\nabla^c_h=\delta''_h-\delta'_h$ and $\Theta_h(\nabla)=\nabla\nabla^c_h+\nabla^c_h\nabla$.
\begin{defn}
	$(\cV,\nabla,h)$ is called a harmonic bundle if $\Theta_h(\nabla)=0$. In this case, $h$ is called a harmonic metric.
\end{defn}
Let $(\cV,\nabla,h)$ be a harmonic bundle. Denote $\theta=\frac{1}{2}(\nabla^{1,0}-\delta'_h)$ and $\dbar=\frac{1}{2}(\nabla^{0,1}+\delta''_h)$. Then $\dbar^2=0$. Denote by $\widetilde{\cV}$ the underlying complex $C^\infty$-vector bundle of $\cV$, then $H=(\widetilde{\cV},\dbar)$ is a holomorphic vector bundle and $\theta$ is a higgs field on $H$ (i.e. $\theta$ is $\sO_M$-linear and $\theta^2=0$). 

Let $\nabla_h$ be the Chern connection on $H$ with respect to $h$ and let  $\Theta_h(H)=\nabla_h^2$ be its curvature form. Then we have the self-dual equation
\begin{align}\label{align_self-dual equation}
\Theta_h(H)+\theta\wedge\overline{\theta}+\overline{\theta}\wedge\theta=0
\end{align}
where $\overline{\theta}=\frac{1}{2}(\nabla^{0,1}-\delta''_h)$ is the adjoint of $\theta$ with respect to the metric $h$.

Conversely, let $(H,\theta,h)$ be a hermitian higgs bundle . Let $\overline{\theta}$ be the adjoint of $\theta$ and let $\partial$ be the unique $(1,0)$-connection such that $\partial+\dbar$ is compatible with $h$. $h$ is called harmonic if $\Theta_h(\theta):=(\partial+\dbar+\theta+\overline{\theta})^2=0$. There is a $1:1$ correspondence (c.f. \cite{Simpson1988,Simpson1992})
\begin{align}\label{align_formal_simpson_corr}
\left\{(\cV,\nabla,h):\Theta_h(\nabla)=0\right\}\stackrel{1:1}{\leftrightarrow}\left\{(H,\theta,h):\Theta_h(\theta)=0\right\}.
\end{align}
Therefore by a harmonic bundle we mean an object on either side of (\ref{align_formal_simpson_corr}).

For the purpose of the present paper, we are interested in tame harmonic bundles in the sense of Simpson \cite{Simpson1990} and Mochizuki \cite{Mochizuki20072,Mochizuki20071}.
\begin{defn}\label{defn_0tame_harmonic_bundle}
	Let $X$ be a complex space and $X^o\subset X_{\rm reg}$ a dense Zariski open subset. A harmonic bundle $(\cV,\nabla,h)$ on $X^o$ is called a tame harmonic bundle on $(X,X^o)$ if, locally at every point $x\in X$, there exists some $c>0$ such that
	\begin{align}\label{align_defn_0tame}
	(\sum_{i=1}^r\|f_i\|^2)^{c}\lesssim|v|_h\lesssim (\sum_{i=1}^r\|f_i\|^2)^{-c}
	\end{align}
holds for every (multivalued) flat section $v$. Here $\{f_1,\dots,f_r\}$ is a local generator of the ideal sheaf defining $X\backslash X^o\subset X$ and $c>0$ is a constant independent of $v$. 
\end{defn}
The tameness is independent of the choice of the local generator $\{f_1,\dots,f_r\}$. Let $\pi:\widetilde{X}\to X$ be a desingularization of $X\backslash X^o\subset X$ so that the preimage of $X\backslash X^o$ is supported on a simple normal crossing divisor $E$. Then (\ref{align_defn_0tame}) is equivalent to the estimate
\begin{align}\label{align_defn_0tame2}
\|z_1\cdots z_r\|^{c'}\lesssim|v|_h\lesssim \|z_1\cdots z_r\|^{-c'},\quad\textrm{ for some }c'>0
\end{align}
where $(z_1,\dots,z_n)$ is an arbitrary holomorphic local coordinate of $\widetilde{X}$ such that $E=\{z_1\cdots z_r=0\}$.
\begin{rmk}\label{rmk_tame_alterdef}
	There is an equivalent definition of tameness, using the corresponding higgs bundle. By desingularization, we assume that $X$ is smooth and $X\backslash X^o\subset X$ is a normal crossing divisor. Then the higgs field $\theta$ of the higgs bundle $(H,\theta)$ associated to $(\cV,\nabla,h)$ can be described as:
	\begin{align}\label{align_tame_higgs}
	\theta=\sum_{i=1}^rf_i\frac{dz_i}{z_i}+\sum_{j=r+1}^ng_jdz_j
	\end{align}
	under a holomorphic coordinate $U\subset X$ such that $U\cap(X\backslash X^o)=\{z_1\cdots z_r=0\}$.
	The harmonic bundle $(\cV,\nabla,h)$ is tame if and only if the coefficients of the characteristic polynomials $\det(t-f_i)$, $i=1,\dots,r$ and $\det(t-g_j)$, $j=r+1,\dots,n$ can be extended
	to the holomorphic functions on $U$. Readers may see \cite{Mochizuki2002} for more details.
\end{rmk}
A typical type of tame harmonic bundles is the variation of Hodge structure.
\begin{defn}{\cite[\S 8]{Simpson1988}}\label{defn_CVHS}
	Let $X$ be a complex space and $X^o\subset X_{\rm reg}$ a dense Zariski open subset. Denote by $\sA^0_{X^o}$ the sheaf of $C^\infty$ functions on $X^o$.
	A polarized complex variation of Hodge structure on $X^o$ of weight $k$ is a harmonic bundle $(\cV,\nabla,h)$ on $X^o$ together with a decomposition $\cV\otimes_{\sO_X}\sA^0_{X^o}=\bigoplus_{p+q=k}\cV^{p,q}$ of $C^\infty$-bundles such that
	\begin{enumerate}
		\item The Griffiths transversality condition 
		\begin{align}\label{align_Griffiths transversality}
		\nabla(\cV^{p,q})\subset \sA^{0,1}(\cV^{p+1,q-1})\oplus \sA^1(\cV^{p,q})\oplus \sA^{1,0}(V^{p-1,q+1})
		\end{align}
		holds for every $p$ and $q$. Here $\sA^{p,q}(\cV^{i,j})$ (resp. $\sA^{k}(\cV^{i,j})$) denotes the sheaf of $C^\infty$ $(p,q)$-forms (resp. $k$-forms) with values in $\cV^{i,j}$.
		\item The hermitian form $Q$ which equals $(-1)^{p}h$ on $\bV^{p,q}$ is parallel with respect to $\nabla$.
	\end{enumerate}
	Denote $S(\bV):=\cF^{p_{\rm max}}$ where $p_{\rm max}=\max\{p|\cF^{p}\neq0\}$. 
\end{defn}
The following proposition is known to experts. We recall the proof in sketch for the convenience of readers.
\begin{prop}\label{prop_VHS_tame}
	If a harmonic bundle $(\cV,\nabla,h)$ admits a complex variation of Hodge structure, then $(\cV,\nabla,h)$ is tame.
\end{prop}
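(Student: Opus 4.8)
The plan is to reduce the statement to the Higgs-theoretic criterion for tameness recorded in Remark \ref{rmk_tame_alterdef}, and then to exploit the nilpotency of the Higgs field attached to a variation of Hodge structure.

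First I would reduce to the normal crossing situation. By pulling back along a desingularization $\pi:\widetilde{X}\to X$ of $X\backslash X^o$ with $\pi^{-1}(X\backslash X^o)$ a simple normal crossing divisor, and using that the tameness estimate (\ref{align_defn_0tame}) is equivalent to (\ref{align_defn_0tame2}), it suffices to treat a smooth $X$ with $X\backslash X^o=\{z_1\cdots z_r=0\}$ inside a coordinate polydisc $U$. In this setting the Higgs field of the associated Higgs bundle $(H,\theta)$ has the shape $\theta=\sum_{i=1}^r f_i\,dz_i/z_i+\sum_{j=r+1}^n g_j\,dz_j$ as in (\ref{align_tame_higgs}), and by Remark \ref{rmk_tame_alterdef} it is enough to show that the coefficients of the characteristic polynomials $\det(t-f_i)$ and $\det(t-g_j)$ extend holomorphically across $\{z_1\cdots z_r=0\}$.

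The key observation is that the Higgs field coming from a polarized complex variation of Hodge structure is fibrewise nilpotent. Indeed, unwinding the definition $\theta=\tfrac12(\nabla^{1,0}-\delta'_h)$ together with the Griffiths transversality condition (\ref{align_Griffiths transversality}), one checks that $\theta$ maps $\cV^{p,q}$ into $\sA^{1,0}(\cV^{p-1,q+1})$; that is, $\theta$ lowers the Hodge grading by one, while the metric-compatible part $\delta'_h$ preserves it. Since the decomposition $\cV\otimes_{\sO_X}\sA^0_{X^o}=\bigoplus_{p+q=k}\cV^{p,q}$ has only finitely many nonzero summands, say $m$ of them, the endomorphism part of $\theta$ is strictly lower triangular with respect to the grading by $p$, whence $\theta^m=0$ as an ${\rm End}(H)$-valued form. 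In particular, for each coordinate direction the residue endomorphisms $f_i$ and $g_j$ (which likewise lower $p$ by one) satisfy $f_i^m=g_j^m=0$ at every point of $U\cap X^o$. I would then conclude immediately: being nilpotent, $f_i$ and $g_j$ have characteristic polynomials $\det(t-f_i)=t^{{\rm rank}\,H}$ and $\det(t-g_j)=t^{{\rm rank}\,H}$, whose coefficients are the constant (hence holomorphic) function $0$ on $U$. By Remark \ref{rmk_tame_alterdef} this is exactly the condition for $(\cV,\nabla,h)$ to be tame.

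The main obstacle I anticipate is the passage from ``$\theta$ lowers the Hodge grading'' to the precise nilpotency of the residue endomorphisms $f_i$ and $g_j$ appearing in (\ref{align_tame_higgs}): one must verify that the grading shift is genuinely effected by the Higgs field $\theta$, and not by the full $(1,0)$-part of the flat connection, and that the shift is uniform across all Hodge types so that a single exponent $m$ works simultaneously for every coefficient. This is a direct consequence of matching the analytic decomposition of $\nabla$ with the holomorphic Higgs structure via the definitions of $\theta$ and $\delta'_h$ given in \S\ref{section_CVHS_Kollar}, but it is the one place where care is needed; everything else in the argument is formal.
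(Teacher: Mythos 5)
Your proposal is correct and follows essentially the same route as the paper: both arguments reduce tameness to the Higgs-theoretic criterion of Remark \ref{rmk_tame_alterdef} and deduce it from the nilpotency of the Higgs field, which in turn comes from $\theta$ shifting the Hodge grading $H^{p,q}\to H^{p-1,q+1}$ by one step. Your write-up merely spells out the desingularization step and the passage from nilpotency of $\theta$ to nilpotency of the individual coefficients $f_i,g_j$ slightly more explicitly than the paper does.
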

\begin{proof}
	Let $\bV:=(\cV,\nabla,\{\cV^{p,q}\},h_\bV)$ be a complex variation of Hodge structure of weight $k$. Take the decomposition 
	$$\nabla=\overline{\theta}+\partial+\dbar+\theta$$
	according to (\ref{align_Griffiths transversality}).
	By \cite[\S 8]{Simpson1988}, the corresponding higgs bundle of $(\cV,\nabla,h_\bV)$ is $(H={\rm Ker}\dbar,\theta,h_\bV)$. There is moreover an orthogonal decomposition of holomorphic subbundles
	$H=\oplus_{p+q=k}H^{p,q}$ where $H^{p,q}=H\cap\cV^{p,q}$ and 
	$$\theta(H^{p,q})\subset H^{p-1,q+1}\otimes \Omega_{X^o}.$$
	
	Thus the higgs field is nilpotent. Therefore $f_i$, $i=1,\dots, r$ and $g_j$, $j=r+1,\dots, n$ in the local expression (\ref{align_tame_higgs}) are nilpotent matrix. Hence $\det(t-f_i)=t^n$ and $\det(t-g_j)=t^n$. So the harmonic bundle is tame due to the alternative definition of tameness (Remark \ref{rmk_tame_alterdef}).
\end{proof}
\subsection{Harmonic bundle and Koll\'ar package}\label{section_CVHS_Kollar}

\begin{prop}\label{prop_CVHS_Ssheaf}
Let $X$ be a complex space and $X^o\subset X_{\rm reg}$ a dense Zariski open subset. Let $(H,\theta,h)$ be a tame harmonic bundle on $X^o$. Let $E\subset H$ be a holomorphic subbundle with vanishing second fundamental form. Assume that $\overline{\theta}(E)=0$. Then $(E,h)$ is a tame hermitian vector bundle with a Nakano semi-positive curvature. As a consequence, $S_X(E,h)$ is a coherent sheaf on $X$.
\end{prop}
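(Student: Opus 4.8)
The plan is to verify the two hypotheses of Proposition \ref{prop_S_coherent} separately, namely that $(E,h)$ is Nakano semi-positive and that it is tame in the sense of Definition \ref{defn_tame_hermitian_bundle}, and then to read off coherence of $S_X(E,h)$ by a direct appeal to that proposition.

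First I would establish Nakano semi-positivity by a pointwise curvature computation. Since the second fundamental form of $E\subset H$ vanishes, the Chern connection of $(E,h)$ is the restriction of that of $(H,h)$, so $\sqrt{-1}\Theta_h(E)=\sqrt{-1}\Theta_h(H)|_E$. The self-dual equation (\ref{align_self-dual equation}) gives $\Theta_h(H)=-(\theta\wedge\overline{\theta}+\overline{\theta}\wedge\theta)$. Writing $\theta=\sum_j\theta_j\,dz_j$ and $\overline{\theta}=\sum_k\overline{\theta}_k\,d\overline{z}_k$ with $\overline{\theta}_k=\theta_k^\ast$ the fibrewise adjoint, this reads $\Theta_{j\overline{k}}(H)=\overline{\theta}_k\theta_j-\theta_j\overline{\theta}_k$. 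For $v=\sum_j v_j\otimes\partial_{z_j}$ with $v_j\in E$, the hypothesis $\overline{\theta}(E)=0$ kills every $\overline{\theta}_k v_j$, so that
$$\sum_{j,k}\langle\Theta_{j\overline{k}}(H)v_j,v_k\rangle_h=\sum_{j,k}\langle\overline{\theta}_k\theta_j v_j,v_k\rangle_h=\sum_{j,k}\langle\theta_j v_j,\theta_k v_k\rangle_h=\Big|\sum_j\theta_j v_j\Big|_h^2\geq0.$$
Thus the Nakano quadratic form of $\sqrt{-1}\Theta_h(E)$ is a pointwise square norm, which is exactly Nakano semi-positivity. This computation is short and is the conceptual heart of the statement: it is the precise mechanism by which the top Hodge piece of a variation of Hodge structure inherits positivity.

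Next I would verify tameness. Passing to a log resolution $\pi:\widetilde{U}\to U$ with $D:=\pi^{-1}(U\setminus X^o)$ a simple normal crossing divisor $\{z_1\cdots z_r=0\}$, I would invoke the prolongation theory of tame harmonic bundles of Simpson \cite{Simpson1990} and Mochizuki \cite{Mochizuki20072,Mochizuki20071}: the Higgs bundle $H=(\widetilde{\cV},\overline{\partial})$ admits a locally free extension $Q$ on $\widetilde{U}$ across $D$, carrying a smooth Hermitian metric $h_Q$, and the tameness estimate (\ref{align_defn_0tame2}) for $(\cV,\nabla,h)$ translates, through Mochizuki's norm estimates, into a two-sided polynomial comparison $\|z_1\cdots z_r\|^{2c}h_Q\lesssim h\lesssim\|z_1\cdots z_r\|^{-2c}h_Q$ of the harmonic metric with $h_Q$ near $D$. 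Since $E\subset H$ is a holomorphic subbundle, $\pi^\ast E\subset Q|_{\pi^{-1}(X^o\cap U)}$ is a subsheaf; taking $h'_Q:=h$ (so that $h'_Q|_{\pi^\ast E}\sim\pi^\ast h$ holds tautologically) and using $\sum_i\|\pi^\ast f_i\|^2\sim\|z_1\cdots z_r\|^2$, the lower bound $\|z_1\cdots z_r\|^{2c}h_Q\lesssim h$ is precisely condition (\ref{align_tame}) with exponent $c$. Hence $(E,h)$ is tame on $X$.

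The main obstacle is the tameness step, and specifically the holomorphic bookkeeping: the subbundle $E$ lives inside the Higgs bundle $H$, whose holomorphic structure $\overline{\partial}=\tfrac12(\nabla^{0,1}+\delta''_h)$ differs from that of the flat bundle $(\cV,\nabla)$, so one must use the prolongation of the Higgs bundle supplied by Mochizuki's norm estimates rather than the Deligne extension of the flat bundle; the nilpotency of $\theta$ recorded in Remark \ref{rmk_tame_alterdef} is what guarantees that the growth is governed by powers of $\|z_1\cdots z_r\|$ with a uniform exponent. Granting this input, everything else is a direct matching of growth exponents. With Nakano semi-positivity and tameness both in hand, Proposition \ref{prop_S_coherent} yields that $S_X(E,h)$ is a coherent sheaf on $X$, which completes the proof.
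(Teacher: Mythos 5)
Your proposal is correct and follows essentially the same route as the paper: Nakano semi-positivity via the self-dual equation together with $\overline{\theta}(E)=0$ and the vanishing of the second fundamental form (your coordinate computation just makes explicit the paper's one-line identity $\sqrt{-1}\Theta_h(E)=\sqrt{-1}\theta\wedge\overline{\theta}\geq 0$), and tameness via Mochizuki's prolongation of the tame Higgs bundle across a log resolution with $Q$ taken to be the prolonged bundle and the norm estimate supplying condition (\ref{align_tame}). Coherence then follows from Proposition \ref{prop_S_coherent} exactly as in the paper.
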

\begin{proof}
	To prove the tameness (Definition \ref{defn_tame_hermitian_bundle}), we construct $Q$ by Mochizuki's prolongation construction. Since the problem is local, we assume that there is a desingularization  $\pi:\widetilde{X}\to X$ such that $\pi$ is biholomorphic over $X^o$ and $D:=\pi^{-1}(X\backslash X^o)$ is a simple normal crossing divisor. By abuse of notations we identify $X^o$ and $\pi^{-1}X^o$. Since  $(H,\theta,h)$ is tame, by \cite[Theroem 8.58]{Mochizuki20072} there is a logarithmic higgs bundle $(\widetilde{H},\widetilde{\theta})$:
	\begin{align*}
	\widetilde{\theta}: \widetilde{H}\to\Omega_{\widetilde{X}}(\log D)\otimes\widetilde{H},
	\end{align*}
	such that $(\widetilde{H},\widetilde{\theta})|_{X^o}$ is holomorphically equivalent to $(H,\theta)$. Let $D=\cup_{i=1}^k D_i$ be the irreducible decomposition and let $(z_1,\dots,z_n)$ be a local coordinate such that $D_i=\{z_i=0\}$, $i=1,\dots,k$. By \cite[Part 3, Chapter 13]{Mochizuki20072}, the tameness of $(H,\theta,h)$ forces a norm estimate 
	\begin{align}\label{align_normest_tame}
	|z_1\cdots z_k|^{2b}\lesssim |s|_h
	\end{align}
	for any local holomorphic section $s$ of $\widetilde{H}$ and a constant $b>0$ which is independent of $s$.
	
	Let $Q:=\widetilde{H}$, then $E$ is a holomorphic subbundle of $Q|_{X^o}$. By (\ref{align_normest_tame}), we see that $(E,h)$ is tame.
	
	To see that $(E,h)$ is Nakano semi-positive, we take the decomposition 
	$$\nabla=\overline{\theta}+\partial+\dbar+\theta$$
	according to (\ref{align_Griffiths transversality}).
	Since $E\oplus E^\bot$ is an orthogonal holomorphic direct sum of $H$ and $\overline{\theta}(E)=0$, we get from (\ref{align_self-dual equation}) that
	$$\sqrt{-1}\Theta_h(E)=\sqrt{-1}\theta\wedge\overline{\theta}\geq 0.$$
	Hence $(E,h)$ is Nakano semi-positive. By Proposition \ref{prop_S_coherent}, $S_X(E,h)$ is a coherent sheaf on $X$.
\end{proof}
Let $\bV:=(\cV,\nabla,\{\cV^{p,q}\},h_\bV)$ be a complex variation of Hodge structure of weight $k$. Let
$$\nabla=\overline{\theta}+\partial+\dbar+\theta$$
be the decomposition according to (\ref{align_Griffiths transversality}).
The corresponding higgs bundle of $(\cV,\nabla,h_\bV)$ is $(H={\rm Ker}\dbar,\theta,h_\bV)$ by \cite[\S 8]{Simpson1988}. There is moreover an orthogonal decomposition of holomorphic subbundles
$H=\oplus_{p+q=k}H^{p,q}$ where $H^{p,q}=H\cap\cV^{p,q}$ and 
$$\theta(H^{p,q})\subset H^{p-1,q+1}\otimes \Omega_{X^o}.$$
For the reason of degrees, we have $\overline{\theta}(S(\bV))=0$. By Proposition \ref{prop_VHS_tame}, we acquire that $(H={\rm Ker}\dbar,\theta,h_\bV)$ and $S(\bV)$ satisfy the conditions in Proposition \ref{prop_CVHS_Ssheaf}. Hence $S_X(S(\bV),h_\bV)$ is a coherent sheaf.

The following proposition shows that $S_X(S(\bV),h_\bV)$ coincides with Saito's $S$-sheaf associated to $IC_X(\bV)$.
\begin{prop}\label{prop_coincide_Saito_S}
	Let $\bV:=(\bV,\nabla,\cF^\bullet,h_\bV)$ be an $\bR$-polarized variation of Hodge structure. Denote by $IC_X(\bV)$ the intermediate extension of $\bV$ on $X$ as a pure Hodge module and by $S(IC_X(\bV))$ the Saito's $S$-sheaf associated to $IC_X(\bV)$ (\cite{MSaito1991}). Then
	$$S_X(S(\bV),h_\bV)\simeq S(IC_X(\bV)).$$
\end{prop}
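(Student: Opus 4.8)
The plan is to show that both sheaves are subsheaves of $j_\ast(K_{X^o}\otimes S(\bV))$ cut out near the boundary $X\setminus X^o$ by the \emph{same} prolongation lattice of $S(\bV)=\cF^{p_{\max}}$, and to exploit that the comparison is local and insensitive to proper modifications. First I would reduce to the model case in which $X$ is smooth and $D:=X\setminus X^o=\{z_1\cdots z_r=0\}$ is a simple normal crossing divisor. On the $L^2$-side this reduction is exactly the functoriality of Proposition~\ref{prop_L2ext_birational}: for a desingularization $\pi:\widetilde X\to X$ that is biholomorphic over $X^o$ one has $S_X(S(\bV),h_\bV)\simeq\pi_\ast S_{\widetilde X}(\pi^\ast S(\bV),\pi^\ast h_\bV)$. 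On Saito's side the matching statement is that the $S$-functor is compatible with proper modifications of the intermediate extension, so $S(IC_X(\bV))\simeq\pi_\ast S(IC_{\widetilde X}(\bV))$; this is part of the functoriality of Saito's $S$-sheaf (cf. \cite{MSaito1991}). Thus it suffices to prove the isomorphism in the simple normal crossing model.

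In the model case I would compute each side as an explicit prolongation of $\cF^{p_{\max}}$ across $D$. For Saito's sheaf, the underlying regular holonomic $\sD$-module of $IC_X(\bV)$ is the minimal extension of $(\cV,\nabla)$, and its Hodge filtration is governed by the Kashiwara--Malgrange $V$-filtration along each component $\{z_i=0\}$. The lowest nonzero Hodge piece $S(IC_X(\bV))$ is then identified with $\omega_X$ twisted by the canonical (Deligne-type) extension of $\cF^{p_{\max}}$ whose residues along each $\{z_i=0\}$ lie in a fixed half-open interval determined by the Hodge-filtration convention; I would record this description, together with the precise interval, from Saito's construction.

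On the $L^2$-side the prolongation is instead read off from the asymptotic behaviour of the Hodge metric $h_\bV$. Since $(H,\theta,h_\bV)$ is a tame harmonic bundle, the norm estimates of Schmid and Cattani--Kaplan--Schmid in the several-variable unipotent case, or equivalently Mochizuki's norm estimates for tame harmonic bundles \cite{Mochizuki20072}, give for any flat frame a growth of the form $|v|_{h_\bV}\sim\prod_i|z_i|^{a_i}\bigl(-\log|z_i|\bigr)^{b_i}$ with exponents $a_i$ fixed by the weight filtration of the local monodromy logarithm. Substituting this into the local condition $\int|f|^2|v|^2_{h_\bV}\,\mathrm{vol}<\infty$ defining $S_X(S(\bV),h_\bV)$, and using the Poincaré-type volume form together with Zucker's one-variable lemma (bootstrapped by the inductive structure of \cite{Mochizuki20072} in several variables), I would show that square integrability is equivalent to a vanishing/pole-order condition on $f$ that singles out exactly the same lattice; the logarithmic factors $\bigl(-\log|z_i|\bigr)^{b_i}$ are absorbed because the relevant forms have top holomorphic degree.

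The main obstacle is matching the two lattices at the boundary: I expect the substantive difficulty to be verifying that the half-open interval of residues selected by the $L^2$-finiteness condition coincides exactly with the one dictated by the Hodge filtration of the minimal extension, including the borderline eigenvalues. This amounts to a careful comparison of the metric growth exponents $a_i$ with the residues of the $V$-filtration, and it is precisely the content of \cite[Theorem~4.10]{SC2021}; I would either invoke that comparison directly or reproduce its one-variable case and propagate it by the Cattani--Kaplan--Schmid / Mochizuki induction.
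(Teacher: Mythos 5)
The paper's entire proof of this proposition is the citation ``See \cite{Schnell2020} or \cite[Theorem 4.10]{SC2021}'', which is exactly the comparison you single out as the substantive step and propose to invoke. Your reduction to the normal crossing model and the matching of the metric-growth lattice against the Hodge-filtered minimal extension is a reasonable outline of what that reference carries out, so your proposal is essentially the same approach with the scaffolding made explicit.
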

\begin{proof}
	See \cite{Schnell2020} or \cite[Theorem 4.10]{SC2021}.
\end{proof}
Now we are ready to show the Koll\'ar package with respect to a tame harmonic bundle.
\begin{thm}
	Let $f:X\rightarrow Y$ be a proper locally K\"ahler morphism from a complex space $X$ to an irreducible complex space $Y$. Assume that every irreducible component of $X$ is mapped onto $Y$. Let $X^o\subset X_{\rm reg}$ be a dense Zariski open subset and $(H,\theta,h)$ a tame harmonic bundle on $X^o$. Let $E\subset H$ be a holomorphic subbundle with vanishing second fundamental form. Assume that $\overline{\theta}(E)=0$. Let $F$ be a Nakano semi-positive vector bundle on $X$. Then the following statements hold.
	\begin{description}
		\item[Torsion Freeness] $R^qf_\ast (S_X(E,h)\otimes F)$ is torsion free for every $q\geq0$ and vanishes if $q>\dim X-\dim Y$.
		\item[Injectivity] If $L$ is a semi-positive holomorphic line bundle so that $L^{\otimes l}$ admits a nonzero holomorphic global section $s$ for some $l>0$, then the canonical morphism
		$$R^qf_\ast(\times s):R^qf_\ast(S_X(E,h)\otimes F\otimes L^{\otimes k})\to R^qf_\ast(S_X(E,h)\otimes F\otimes L^{\otimes k+l})$$
		is injective for every $q\geq0$ and every $k\geq1$.
		\item[Vanishing] If $Y$ is a projective algebraic variety and $L$ is an ample line bundle on $Y$, then
		$$H^q(Y,R^pf_\ast (S_X(E,h)\otimes F)\otimes L)=0,\quad \forall q>0,p\geq0.$$
		\item[Decomposition] Assume moreover that $X$ is a compact K\"ahler space, then $Rf_\ast (S_X(E,h)\otimes F)$ splits in $D(Y)$, i.e. 
		$$Rf_\ast(S_X(E,h)\otimes F)\simeq \bigoplus_{q} R^qf_\ast(S_X(E,h)\otimes F)[-q]\in D(Y).$$
		As a consequence, the spectral sequence
		$$E^{pq}_2:H^p(Y,R^qf_\ast(S_X(E,h)\otimes F))\Rightarrow H^{p+q}(X,S_X(E,h)\otimes F)$$
		degenerates at the $E_2$ page.
	\end{description}
\end{thm}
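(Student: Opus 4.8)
The plan is to deduce all four statements from the abstract Koll\'ar package of Section~\ref{section_abstract_Kollar_package} by absorbing the auxiliary bundle $F$ into the coefficient. By Proposition~\ref{prop_CVHS_Ssheaf} the pair $(E,h)$ is tame and Nakano semi-positive, so $S_X(E,h)$ is coherent. Setting $\widetilde E:=E\otimes F$ and $\widetilde h:=h\otimes h_F$, where $h_F$ is a metric on $F$ with Nakano semi-positive curvature, Lemma~\ref{lem_Kernel} gives $S_X(E,h)\otimes F\simeq S_X(\widetilde E,\widetilde h)$; since $S_X(E,h)$ is coherent and $F$ is locally free, this sheaf is again coherent. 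Thus each of the four assertions, after the harmless twist by the relevant line bundle, becomes the corresponding statement of Section~\ref{section_abstract_Kollar_package} for the coefficient $(\widetilde E,\widetilde h)$. To invoke that machinery the only point not yet in place is the Nakano semi-positivity of $(\widetilde E,\widetilde h)$.

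The main step, and the one I expect to be the crux, is precisely this positivity. Writing the curvature of a tensor product,
\begin{align*}
\sqrt{-1}\Theta_{\widetilde h}(\widetilde E)=\sqrt{-1}\Theta_h(E)\otimes{\rm Id}_F+{\rm Id}_E\otimes\sqrt{-1}\Theta_{h_F}(F),
\end{align*}
the second summand is Nakano semi-positive because $F$ is. For the first summand one cannot argue by a general tensor-product principle, since Nakano semi-positivity is \emph{not} preserved under tensor products; instead I would exploit the special shape of $\Theta_h(E)$. By Proposition~\ref{prop_CVHS_Ssheaf} one has $\sqrt{-1}\Theta_h(E)=\sqrt{-1}\theta\wedge\overline\theta$, which in a local holomorphic frame, writing $\theta=\sum_i\theta_i\,dz_i$, produces a Nakano form of perfect-square type, $\big|\sum_i\theta_i u_i\big|^2$, on $T_X\otimes E$. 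This manifest perfect-square structure survives the tensor by ${\rm Id}_F$, so the Nakano form of the first summand on a section $w=\sum_i\partial_{z_i}\otimes w_i$ of $T_X\otimes(E\otimes F)$ equals $\big|\sum_i(\theta_i\otimes{\rm Id}_F)w_i\big|^2\geq0$. Hence $\sqrt{-1}\Theta_{\widetilde h}(\widetilde E)$ is a sum of two Nakano semi-positive forms and is therefore Nakano semi-positive.

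With $(\widetilde E,\widetilde h)$ Nakano semi-positive and $S_X(\widetilde E,\widetilde h)$ coherent, the four statements follow by quoting Section~\ref{section_abstract_Kollar_package}. Torsion freeness and the vanishing $R^qf_\ast=0$ for $q>\dim X-\dim Y$ are the abstract Torsion Freeness theorem applied to $\widetilde E$. Injectivity is Theorem~\ref{thm_injectivity_formal} for the coefficient $\widetilde E$ and the semi-positive line bundle $L$, after rewriting $S_X(\widetilde E,\widetilde h)\otimes L^{\otimes k}\simeq S_X(E,h)\otimes F\otimes L^{\otimes k}$ by Lemma~\ref{lem_Kernel}. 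The vanishing $H^q(Y,\cdots\otimes L)=0$ is the abstract Vanishing theorem for $\widetilde E$ with $Y$ projective and $L$ ample. Finally, the decomposition is Theorem~\ref{thm_decomposition_formal} for $\widetilde E$: here the hypothesis that $X$ is compact K\"ahler is exactly what supplies, through Lemma~\ref{lem_complete_metric_exists_locally}, the global K\"ahler metric on $X^o$ with locally bounded $(\infty,1)$-potential and local completeness that Theorem~\ref{thm_decomposition_formal} requires, and the $E_2$-degeneration then follows formally. The only genuinely new work beyond Section~\ref{section_abstract_Kollar_package} is the curvature computation above; everything else is bookkeeping through Lemma~\ref{lem_Kernel} and Proposition~\ref{prop_CVHS_Ssheaf}.
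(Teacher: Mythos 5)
Your proposal is correct and follows essentially the same route as the paper's proof: coherence and Nakano semi-positivity of $(E,h)$ from Proposition \ref{prop_CVHS_Ssheaf}, absorption of $F$ (and of the line-bundle twists) via Lemma \ref{lem_Kernel}, and then a direct appeal to the abstract Koll\'ar package of Section \ref{section_abstract_Kollar_package}. The only quibble is your aside that Nakano semi-positivity is not preserved under tensor products --- it is, since in an orthonormal frame of $F$ the Nakano form of $\sqrt{-1}\Theta_h(E)\otimes{\rm Id}_F$ splits as a sum of Nakano forms of $\sqrt{-1}\Theta_h(E)$ --- but your perfect-square argument via $\sqrt{-1}\theta\wedge\overline{\theta}$ is valid in any case, and if anything is more explicit than the paper, which simply asserts the semi-positivity of $(E\otimes F,h\otimes h_F)$.
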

\begin{proof}
	By Proposition \ref{prop_CVHS_Ssheaf}, $S_X(E,h)$ is a coherent sheaf. Let $h_F$ be a hermitian metric on $F$ with Nakano semi-positive curvature. By Lemma \ref{lem_Kernel}, we see that
	$$S_X(E,h)\otimes F\simeq S_X(E\otimes F,h\otimes h_F)$$
	is a coherent sheaf on $X$. Note that $(E\otimes F,h\otimes h_F)$ is Nakano semi-positive by Proposition \ref{prop_CVHS_Ssheaf}. It follows from  the abstract Koll\'ar package in \S 4 that the claims of the theorem are valid.
\end{proof}
Taking $S_X(S(\bV),h_\bV)\simeq S(IC_X(\bV))$ for a variation of Hodge structure $\bV$ and $F=\sO_X$, we obtain Koll\'ar's conjecture.

We end this section with remarks on two other packages of Koll\'ar's conjecture. 
\begin{rmk}[Remarks on the Intersection cohomology package]
	In \cite[\S 5.8]{Kollar1986_2}, Koll\'ar also predicts that $S(IC_X(\bV))$ is related to the intersection complex $IC_X(\bV)$ when $\bV=(\cV,\nabla,\cF^\bullet,h)$ is a polarized variation of Hodge structure. This involves the $L^2$-representation of the intersection complex.
	\begin{thm}
		Let $X$ be a compact K\"ahler space of pure dimension $n$ and $X^o\subset X_{\rm reg}$ a dense Zariski open subset. Let $\bV=(\cV,\nabla,\cF^\bullet,h)$ be an $\bR$-polarized variation of Hodge structure of weight $r$ on $X^o$. Then $IH^k(X,\bV)$ admits a pure Hodge structure of weight $k$: 
		$$IH^k(X,\bV)=\bigoplus_{\stackrel{p,q\geq 0}{p+q=k+r}}IH^{p,q}(X,\bV),\quad IH^{p,q}(X,\bV)=\overline{IH^{q,p}(X,\bV)}$$
		 for every $0\leq k\leq 2\dim X$.
		There is moreover a morphism
		$$IC_X(\bV)\to S_X(\bV)$$
		in the derived category of sheaves of $\bC$-vector spaces which induces an isomorphism
		$$IH^{n+r,q}(X,\bV)\to H^q(X,S_X(\bV)),\quad \forall q\geq 0.$$
	\end{thm}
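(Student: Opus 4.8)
The plan is to compute both $IH^\bullet(X,\bV)$ and $H^\bullet(X,S_X(\bV))$ by $L^2$-harmonic forms for one and the same complete K\"ahler metric on $X^o$, and to read the Hodge structure off the harmonic theory of the tame harmonic bundle underlying $\bV$. First I would fix a complete K\"ahler metric $ds^2$ on $X^o$ of Poincar\'e type along $X\setminus X^o$; since $X$ is compact K\"ahler such a metric exists by globalizing the construction of Lemma \ref{lem_complete_metric_exists_locally}. Forming the $L^2$-de Rham complex of $(\cV,\nabla)$ with respect to $ds^2$ and the Hodge metric $h$, the first essential input is the $L^2$-representation of intersection cohomology: for a polarized variation of Hodge structure this complex is, up to a degree shift, a fine resolution of $IC_X(\bV)$, so that $IH^k(X,\bV)\simeq H^k_{(2)}(X^o,\cV;ds^2,h)$. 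This rests on the asymptotic norm estimates for the Hodge metric (the degeneration theory of Cattani--Kaplan--Schmid and Kashiwara--Kawai, available for tame harmonic bundles via \cite{Mochizuki20072}; compare \cite{SC2021}), and the completeness of $ds^2$ guarantees that the $L^2$-cohomology is represented by $L^2$-harmonic forms.

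Next I would install the Hodge decomposition on the harmonic space through the K\"ahler identities of the harmonic bundle. Writing $\nabla=\overline{\theta}+\partial+\dbar+\theta$ and using the self-dual equation, the flat Laplacian $\Delta_\nabla$ equals twice the Laplacian of the Higgs--Dolbeault operator $\dbar+\theta$ and twice that of its conjugate $\partial+\overline{\theta}$, which is the harmonic-bundle analogue of the coincidence of Laplacians on a K\"ahler manifold. Hence an $L^2$-harmonic form is simultaneously harmonic for all of these operators, and combining this with the $C^\infty$ Hodge decomposition $\cV\otimes\sA^0_{X^o}=\bigoplus_{p+q=r}\cV^{p,q}$ of the variation bigrades the harmonic space by total Hodge type. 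This produces the decomposition $IH^k(X,\bV)=\bigoplus_{\substack{p,q\geq 0\\ p+q=k+r}}IH^{p,q}(X,\bV)$, the weight shift by $r$ being the weight of $\bV$, while pairing complex conjugation with the polarization form $Q$ (which carries a harmonic representative of type $(p,q)$ to one of type $(q,p)$) gives the symmetry $IH^{p,q}=\overline{IH^{q,p}}$.

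Finally I would identify the extreme Hodge piece with $H^q(X,S_X(\bV))$ and construct the comparison morphism. The highest index is carried by the top graded Higgs term $\Omega^n_{X^o}\otimes S(\bV)$, where $S(\bV)=\cF^{p_{\rm max}}$; for an effective weight-$r$ variation this index is $p=n+r$. Since $\overline{\theta}(S(\bV))=0$ and there are no $(n+1,\cdot)$-forms, a $\nabla$-harmonic form of type $(n+r,q)$ reduces exactly to an $L^2$-harmonic $(n,q)$-form valued in $S(\bV)$ for the operator $\dbar$; by Proposition \ref{prop_sH} and Theorem \ref{thm_main_local1} such forms represent $H^q(X,S_X(\bV))$, where Proposition \ref{prop_L2_extension_independent} lets me replace $ds^2$ by the metric defining $S_X(\bV)$. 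Projecting the de Rham complex onto this top graded Higgs piece realizes a morphism $IC_X(\bV)\to S_X(\bV)$ in the derived category, and on the $(n+r,q)$-piece it induces the asserted isomorphism; Proposition \ref{prop_coincide_Saito_S} moreover identifies the target with Saito's $S$-sheaf.

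I expect the principal obstacle to be the first step, namely that the $L^2$-de Rham complex computes $IH^\bullet(X,\bV)$ on a possibly highly singular $X$: this requires the full strength of the asymptotic Hodge-norm estimates and a delicate local analysis near $X\setminus X^o$, which is exactly where the tameness of $\bV$ is used. A secondary technical point is the comparison between the complete Poincar\'e-type metric used for intersection cohomology and the incomplete metric used to define $S_X(\bV)$; this is reconciled by the metric-independence in Proposition \ref{prop_L2_extension_independent} together with a direct verification that the two $L^2$ integrability conditions agree in the top bidegree $(n,q)$.
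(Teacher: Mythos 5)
Your proposal is correct and follows essentially the same route as the paper: both rest on the $L^2$-representation of $IC_X(\bV)$ by a suitable complete K\"ahler metric on $X^o$ (the paper cites this as Theorem 1.4 of its companion paper on the $L^2$ interpretation of intersection cohomology), obtain the weight-$(k+r)$ Hodge structure from the $(p,q)$-decomposition of $L^2$-harmonic forms, and realize the map $IC_X(\bV)\to S_X(\bV)$ as the projection onto the $S(\bV)$-valued $(n,\bullet)$-component, identified with $S_X(\bV)$ via Theorem \ref{thm_main_local1} and Proposition \ref{prop_coincide_Saito_S}. Your write-up simply spells out the harmonic-bundle K\"ahler identities and the metric-comparison point that the paper's two-line proof leaves implicit.
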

\begin{proof}
	The first statement is a consequence of \cite[Theorem 1.4]{SC2021_CGM}. Roughly speaking, there is a complete K\"ahler metric $ds^2$ on $X^o$ whose $L^2$-de Rham complex $\sD^\bullet_{X,\bV;ds^2,h}$ is quasi-isomorphic to $IC_X(\bV)$. As a consequence, there is a canonical isomorphism
	$$IH^k(X,\bV)\simeq H^{k}_{(2)}(X^o,\bV;ds^2,h),\quad\forall k.$$
	The $(p,q)$-decomposition of forms in $\sD^\bullet_{X,\bV;ds^2,h}$ provides the Hodge structure on $IH^k(X,\bV)$. See \cite[\S 8.3]{SC2021_CGM} for details. The second claim follows from the diagram
	$$\xymatrix{
	\sD^\bullet_{X,\bV;ds^2,h}\ar[r]^-\tau\ar[d]^{\simeq}& \sD^{n,\bullet}_{X,ds^2}(S(\bV),h)\ar[d]^{\simeq\textrm{ (Theorem \ref{thm_main_local1}, Proposition \ref{prop_coincide_Saito_S})}}\\
	IC_X(\bV) & S_X(\bV)
    }$$
    where $\tau$ is taking the projection to the $S(\bV)$-valued $(n,\bullet)$-component.
\end{proof}
\end{rmk}
\begin{rmk}[Remarks on the direct image package]
	In \cite[\S 5.8]{Kollar1986_2}, Koll\'ar also predicts that $$R^qf_\ast S(IC_X(\bV))\simeq S_Y(R^qf_\ast IC_X(\bV)|_{Y^o})$$
	where $Y^o$ is the Zariski open subset of $Y$ so that $R^qf_\ast IC_X(\bV)|_{Y^o}$ is a local system whose fiber at $y\in Y^o$ is canonically isomorphic to $\bH^q(X_y,IC_{X_y}(\bV|_{X_y\cap X^o}))$. This is also a consequence of the $L^2$-representation of $IC_X(\bV)$ (\cite[Theorem 1.4]{SC2021_CGM}). 
\end{rmk}
\bibliographystyle{plain}
\bibliography{CGM_Kollar}

\end{document}